\newdimen\R 
\newcommand{\Z}{\mathbb{Z}}
\newcommand{\psitor}{\Psi^{[Q]}_{\rm{tor}}(\mathbf{x})}
\newcommand{\torext}{\mathcal{L}_{\rm{tor}}([Q])}
\theoremstyle{plain}
\newtheorem{theorem}{Theorem}[section]
\newtheorem{definition}[theorem]{Definition}
\newtheorem{lemma}[theorem]{Lemma}
\newtheorem{prop}[theorem]{Proposition}
\newtheorem{cor}[theorem]{Corollary}
\newtheorem{example}[theorem]{Example}
\newtheorem{remark}[theorem]{Remark}
\title{A Toric Analogue for Greene's Rational Function of a Poset}
\date{}
\author{Elise Catania}
\keywords{Toric, poset, partial cyclic order, source, sink, mutation, linear extension, arrangement, rational function, Greene, Kleiss-Kuijf, Parke-Taylor}
\subjclass{05A19}
\begin{document}

\maketitle
\begin{abstract}
Given a finite poset, Greene introduced a rational function obtained by summing certain rational functions over the linear extensions of the poset. 
This function has interesting interpretations, and for certain families of posets, it simplifies surprisingly. 
In particular, Greene evaluated this rational function for strongly planar posets in his work on the Murnaghan--Nakayama formula.

In 2012, Develin, Macauley, and Reiner introduced toric posets, which  combinatorially are equivalence classes  of posets (or rather acyclic quivers)  under the operation of flipping maximum elements into minimum elements and vice versa. In this work, we introduce a toric analogue of Greene's rational function for toric posets, and study its properties. In addition, we use toric posets to show that the Kleiss--Kuijf relations, which appear in scattering amplitudes, are equivalent to a specific instance of Greene's evaluation of his rational function for strongly planar posets. Also in this work, we give an algorithm for finding the set of toric total extensions of a toric poset. 
\end{abstract}

\section{Introduction} \label{sec:intro}

Let $P$ be a poset on $[n] = \{1, 2, \ldots, n\}$ and $\mathbf{x}=(x_1,x_2, \ldots, x_n)$ be a tuple of indeterminates. A linear extension $w=(w_1,\ldots,w_n)$ of $P$ is a total order
$w_1 < w_2 < \cdots < w_n$ on its elements that extends the partial order; that is, if $w_i <_P w_j$, then $i <_\Z j$. Let $\mathcal{L}(P)$ denote the set of linear extensions of $P$. In 1992, Curtis Greene introduced the following rational function\footnote{One should also note that it is {\bf not} the famous {\it Greene-Kleitman invariant} \cite{greenekleitman} of a poset.} in order to give a combinatorial proof of the well-known Murnaghan--Nakayama formula  \cite{Greene}: 
\begin{equation}\label{Greene's function}
\Psi^P(\mathbf{x}) = \sum_{w \in \mathcal{L}(P)} \frac{1}{(x_{w_1}-x_{w_2})(x_{w_2}-x_{w_3}) \cdots (x_{w_{n-1}}-x_{w_n})}.
\end{equation}
Note that permuting the labels $\{1,2,\ldots,n\}$ of the elements of $P$ will
only permute the variables in the
rational function $\Psi^P(\mathbf{x})$ without significantly changing its ``shape"
e.g. numerator and denominator degrees in simplest form, factorizations thereof, etc.
Part of the mathematical beauty in $\Psi^P(\mathbf{x})$ is that for certain families of posets, $\Psi^P(\mathbf{x})$ simplifies surprisingly. 
Here are two examples.

\begin{table}[h]
\centering
\begin{tabular}{|c|c|c|} 
\hline
{} & $P_1$ & $P_2$ \\
\hline 
{} & \begin{tikzpicture}[scale = 0.95]
\node(1) at (-0.5,0){1};
\node(2) at (-1,1){2};
\node(3) at (0,1){3};
\node(4) at (-0.5,2){4};
\node(5) at (1.5,0){5};
\node(6) at (1,1){6};
\node(7) at (2,1){7};
\draw[line width = 0.25 mm,->](1) -- (2);
\draw[line width = 0.25 mm,->](1) -- (3);
\draw[line width = 0.25 mm,->](2) -- (4);
\draw[line width = 0.25 mm,->](3) -- (4);
\draw[line width = 0.25 mm,->](5) -- (6);
\draw[line width = 0.25 mm,->](5) -- (7);
\end{tikzpicture} &
\begin{tikzpicture}[scale = 0.95]
\node(1) at (-1,0) {2};
\node(2) at (0,0) {1};
\node(3) at (-1.5,1) {3};
\node(4) at (-0.5,1) {4};
\node(5) at (0.5,1) {5};
\node(6) at (0,2) {6};
\draw[line width = 0.25 mm,->](1) -- (3);
\draw[line width = 0.25 mm,->](1) -- (4);
\draw[line width = 0.25 mm,->](2) -- (4);
\draw[line width = 0.25 mm,->](2) -- (5);
\draw[line width = 0.25 mm,->](4) -- (6);
\draw[line width = 0.25 mm,->](5) -- (6);
\end{tikzpicture} \\
\hline 

&&\\

$\Psi_{P_i}(\mathbf{x})$ & $0$ & 
\huge{$\frac{x_1-x_6}{(x_2-x_3)(x_2-x_4)(x_1-x_4)(x_1-x_5)(x_4-x_6)(x_5-x_6)}$} \\

&&\\
\hline 
\end{tabular}
\caption{}
\label{table:1}
\end{table}

 \noindent 
 The two above examples illustrate one of Greene's main results on strongly planar posets.
 A \textit{planar} poset is one whose associated Hasse diagram $H(P)$ is a planar graph. A poset $P$ is \textit{strongly planar} if its Hasse diagram $H(P)$ may be order-embedded in
 $\mathbb{R} \times \mathbb{R}$ without edge crossings, even when an extra minimum element  $\hat{0}$ and maximum element $\hat{1}$ are added to $P$. An example of a poset whose Hasse diagram is a planar graph, but is not a strongly planar poset is the following bow tie poset. 
\begin{center}
\begin{tikzpicture}[scale = 0.75]
\node(1) at (-1,0){1};
\node(2) at (1,0){2};
\node(3) at (-1,2){3};
\node(4) at (1,2){4};
\draw[line width = 0.25 mm, ->] (1)  -- (3);
\draw[line width = 0.25 mm,->](2) -- (3);
\draw[line width = 0.25 mm,->] (1)  -- (4);
\draw[line width = 0.25 mm,->](2) -- (4);
\end{tikzpicture}
\end{center}

 One of the characteristics of a strongly planar poset is that its Hasse diagram can be drawn in the plane such that its edges bound regions of the plane that have disjoint interiors from each other; 
 we will call this set of regions $\Delta$. Each bounded region $\delta \in \Delta$ has a unique minimum element $\rm{min(\delta)}$ and a unique maximum element $\rm{max(\delta)}$. We say that a poset $P$ is \textit{connected} if $H(P)$ is a connected graph. Otherwise, the poset is \textit{disconnected}. In \cite{Greene}, Greene proved for a strongly planar poset that $\Psi^P(\mathbf{x})$ vanishes if $H(P)$ is disconnected, and otherwise
\begin{equation} 
\Psi^P(\mathbf{x}) = \frac{\prod_{\delta \in \Delta} (x_{\rm{min(\delta)}}-x_{\rm{max(\delta)}})}{\prod_{i \lessdot_{P} j} (x_i-x_j)}. \label{stronglyplanar}
\end{equation}

In Table \ref{table:1}, the poset $P_2$ is a connected, strongly planar poset and in $H(P_2)$, there is exactly one bounded region $\delta$, with $\max(\delta)=6$ and $\min(\delta)=1$.

In \cite{Cones}, Boussicault, F\'eray, Lascoux, and Reiner interpreted $\Psi^P(\mathbf{x})$ geometrically and algebraically, extending Greene's results. E.g., they showed for all posets $P$ that $H(P)$ is disconnected if and only if $\Psi^P(\mathbf{x})=0$.

 In 2012, Develin, Macauley, and Reiner introduced \textit{toric posets} \cite{toric} (also seen in \cite{macauley2016morphisms}). Geometrically, a toric poset corresponds to a toric chamber in the complement of a graphic toric hyperplane arrangement. This is similar to how a poset corresponds to a chamber in the complement of a graphic hyperplane arrangement; see \cite{toric, Zaslavsky, postnikovfaces, stanleyacyclic} and Section~\ref{sec:posets and hyperplanes}. Combinatorially, a toric poset is an equivalence class $[Q]$ of acyclic quivers that are equivalent under the relation of flipping a sink vertex to a source vertex and vice versa.

\begin{example}\label{Ex: toric poset example}
\rm
Let us consider the following toric poset $[Q]$:

\begin{center}
\begin{tabular}{cccccc}
 \begin{tikzpicture}[scale=0.9]
\node(1) at (0,0) {$1$};
\node(2) at (-1,1) {$2$};
\node(3) at (1,1) {$3$};
\node(4) at (0,2){$4$};
\draw[line width = 0.25 mm, ->] (1) -- (2);
\draw[line width = 0.25 mm, ->] (2) -- (4);
\draw[line width = 0.25 mm, ->] (1) -- (3);
\draw[line width = 0.25 mm, ->] (3) -- (4);
\end{tikzpicture}
& 
\begin{tikzpicture}[scale=0.75]
\node(1) at (-1,0) {$1$};
\node(4) at (1,0) {$4$};
\node(3) at (1,2) {$3$};
\node(2) at (-1,2){$2$};
\draw[line width = 0.25 mm, ->] (1) -- (3);
\draw[line width = 0.25 mm, ->] (1) -- (2);
\draw[line width = 0.25 mm, ->] (4) -- (3);
\draw[line width = 0.25 mm, ->] (4) -- (2);
\end{tikzpicture}

&
\begin{tikzpicture}[scale=0.9]
\node(2) at (0,0) {$2$};
\node(4) at (1,1) {$4$};
\node(1) at (-1,1) {$1$};
\node(3) at (0,2){$3$};
\draw[line width = 0.25 mm, ->] (2) -- (1);
\draw[line width = 0.25 mm, ->] (2) -- (4);
\draw[line width = 0.25 mm, ->] (1) -- (3);
\draw[line width = 0.25 mm, ->] (4) -- (3);
\end{tikzpicture} 

&
\begin{tikzpicture}[scale=0.75]
\node(3) at (1,0) {$3$};
\node(2) at (-1,0) {$2$};
\node(1) at (-1,2) {$1$};
\node(4) at (1,2){$4$};
\draw[line width = 0.25 mm, ->] (2) -- (1);
\draw[line width = 0.25 mm, ->] (2) -- (4);
\draw[line width = 0.25 mm, ->] (3) -- (1);
\draw[line width = 0.25 mm, ->] (3) -- (4);
\end{tikzpicture}

\begin{tikzpicture}[scale=0.9]
\node(4) at (0,0) {$4$};
\node(3) at (1,1) {$3$};
\node(2) at (-1,1) {$2$};
\node(1) at (0,2){$1$};
\draw[line width = 0.25 mm, ->] (2) -- (1);
\draw[line width = 0.25 mm, ->] (3) -- (1);
\draw[line width = 0.25 mm, ->] (4) -- (2);
\draw[line width = 0.25 mm, ->] (4) -- (3);
\end{tikzpicture} 

&
\begin{tikzpicture}[scale=0.9]
\node(3) at (0,0) {$3$};
\node(4) at (1,1) {$4$};
\node(1) at (-1,1) {$1$};
\node(2) at (0,2){$2$};
\draw[line width = 0.25 mm, ->] (1) -- (2);
\draw[line width = 0.25 mm, ->] (4) -- (2);
\draw[line width = 0.25 mm, ->] (3) -- (4);
\draw[line width = 0.25 mm, ->] (3) -- (1);
\end{tikzpicture}
\end{tabular}
\end{center}
One can check that any two representatives $Q_1, Q_2 \in [Q]$ differ by a sequence of source to sink (or sink to source) flips.
\end{example}

 This flip operation has been well-studied and appears widely in different contexts \cite{bernstein, chen, eriksson, macauley2009posets, mosesian, pretzel, Propp, speyer}. In fact, these equivalence classes are subsets of the mutation class of a quiver used in cluster algebras \cite{fomin2}. 
 
 Toric posets can be thought of informally as a cyclic type of poset. Other examples of posets that are cyclic in nature, but are distinct from toric posets, are \textit{partial cyclic orders} \cite{megiddo1976partial} and \textit{affine posets} \cite{galashin2021poset}. Partial cyclic orders will arise in the discussion in Section \ref{sec: relationship to tricolored subdivisions}.
 
 Just as a permutation $(w_1,w_2,\ldots,w_n)$ of $[n]$ may be thought of as a total order $w_1<w_2<\cdots<w_n$ or an acyclic orientation of the complete graph on $[n]$, a {\it toric total order} is the cyclic equivalence class
 $[(w_1,w_2,\ldots,w_n)]$ under rotation $(w_1,w_2,\ldots,w_n) \mapsto (w_2,w_3,\ldots,w_n,w_1)$, or the special case of a toric poset $[Q]$ for an acyclic quiver whose underlying undirected graph is complete. And just as every poset $P$ on $[n]$ has its associated set $\mathcal{L}(P)$ of linear extensions $w$,
 Section \ref{sec: toric posets} will associate to every toric poset $[Q]$ a collection of toric total orders $[w]$ called its set of \textit{toric total extensions}, denoted $\torext$.

 \begin{example} \label{ex: Calculating toric total extensions} \rm The toric poset $[Q]$ from Example \ref{Ex: toric poset example} has four toric total extensions, 
  $$
  \torext=\{[(1,2,3,4)],[(1,3,2,4)],[(1,4,2,3)],[(1,4,3,2)]\}
  $$
  with representatives depicted below:
  \begin{center}
  \begin{tabular}{cccc}
  
\begin{tikzpicture}
\node(1) at (0,0) {$1$};
\node(2) at (0,1) {$2$};
\node(3) at (0,2) {$3$};
\node(4) at (0,3){$4$};

\draw[line width = 0.25 mm, ->] (1) -- (2);
\draw[line width = 0.25 mm, ->] (2) -- (3);
\draw[line width = 0.25 mm, ->] (3) -- (4);

\draw[line width = 0.25 mm, ->] (1) to [out=45,in=315, looseness = 0.8]  (3);
\draw[line width = 0.25 mm, ->] (2) to [out=45,in=315, looseness = 0.8] (4);
\draw[line width = 0.25 mm, ->] (1) to [out=20,in=340] (4);
\end{tikzpicture} \hspace{1 cm} &

\begin{tikzpicture}
\node(1) at (0,0) {$1$};
\node(3) at (0,1) {$3$};
\node(2) at (0,2) {$2$};
\node(4) at (0,3){$4$};

\draw[line width = 0.25 mm, ->] (1) -- (3);
\draw[line width = 0.25 mm, ->] (3) -- (2);
\draw[line width = 0.25 mm, ->] (2) -- (4);

\draw[line width = 0.25 mm, ->] (1) to [out=45,in=315, looseness = 0.8]  (2);
\draw[line width = 0.25 mm, ->] (3) to [out=45,in=315, looseness = 0.8] (4);
\draw[line width = 0.25 mm, ->] (1) to [line width = 0.25 mm, out=20,in=340] (4);
\end{tikzpicture} \hspace{1 cm} &

\begin{tikzpicture}
\node(1) at (0,0) {$1$};
\node(4) at (0,1) {$4$};
\node(2) at (0,2) {$2$};
\node(3) at (0,3) {$3$};

\draw[line width = 0.25 mm, ->] (1) -- (4);
\draw[line width = 0.25 mm, ->] (4) -- (2);
\draw[line width = 0.25 mm, ->] (2) -- (3);
\draw[line width = 0.25 mm, ->] (1) to  [out=45,in=315, looseness = 0.8] (2);
\draw[line width = 0.25 mm, ->] (4) to  [out=45,in=315, looseness = 0.8] (3);
\draw[line width = 0.25 mm, ->] (1) to [line width = 0.25 mm, out=20,in=340] (3);
\end{tikzpicture} \hspace{1 cm} &

\begin{tikzpicture}
\node(1) at (0,0) {$1$};
\node(4) at (0,1) {$4$};
\node(3) at (0,2) {$3$};
\node(2) at (0,3) {$2$};

\draw[line width = 0.25 mm, ->] (1) -- (4);
\draw[line width = 0.25 mm, ->] (4) -- (3);
\draw[line width = 0.25 mm, ->] (3) -- (2);
\draw[line width = 0.25 mm, ->] (1) to  [out=45,in=315, looseness = 0.8] (3);
\draw[line width = 0.25 mm, ->] (4) to  [out=45,in=315, looseness = 0.8] (2);
\draw[line width = 0.25 mm, ->] (1) to [line width = 0.25 mm, out=20,in=340] (2);
\end{tikzpicture}.
  \end{tabular}
  \end{center} 
  \end{example}
 
In this work, we define a toric analogue of Greene's rational function for toric posets. Just as Greene's rational function is a sum of rational functions indexed by the set of linear extensions of a poset, the toric analogue is a sum of rational functions indexed by the set of toric total extensions.

\begin{definition} \rm \label{def: toric Greene}
Let $[Q]$ be a toric poset. Then, we define $\psitor$ as 
\[\psitor:= \sum_{[w] \in \torext} \Psi_{\rm{tor}}^{[w]}(\mathbf{x}),\]
where
\[\Psi_{\rm{tor}}^{[w]}(\mathbf{x})=\frac{1}{(x_{w_1}-x_{w_2})(x_{w_2}-x_{w_3}) \cdots (x_{w_{n-1}}-x_{w_n})(x_{w_n}-x_{w_1})}.\]
\end{definition}

\begin{example} \rm Let $[Q]$ be the toric poset in Example \ref{Ex: toric poset example}. 
Since \[\torext=\{[(1,2,3,4)],[(1,3,2,4)],[(1,4,2,3)],[(1,4,3,2)]\},\] one has
\begin{align*}
\psitor &= \frac{1}{(x_1-x_2)(x_2-x_3)(x_3-x_4)(x_4-x_1)}  + \frac{1}{(x_1-x_3)(x_3-x_2)(x_2-x_4)(x_4-x_1)} \\ &+ 
\frac{1}{(x_1-x_4)(x_4-x_2)(x_2-x_3)(x_3-x_1)}+\frac{1}{(x_1-x_4)(x_4-x_3)(x_3-x_2)(x_2-x_1)} \\ &= \frac{-2}{(x_1-x_2)(x_1-x_3)(x_2-x_4)(x_3-x_4)}.
\end{align*}
\end{example}

\vskip.1in
\noindent
\textbf{Main Results.}  
In this paper, we use Greene's results as well as the results of Boussicault, F\'eray, Lascoux, and Reiner as motivation, and prove similar results for $\psitor$. The first is the following vanishing result.

\begin{theorem}
\label{thm: Cut vertex}
Let $[Q]$ be a toric poset, and let $G$ be the underlying undirected graph of $[Q]$. If $G$ is either disconnected with at least three vertices or has a cut vertex, then $\Psi_{\rm{tor}}^{[Q]}(\mathbf{x})=0$. 
\end{theorem}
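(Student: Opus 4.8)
The plan is to treat the two hypotheses separately; in each case I will use the description of $\torext$ from Section~\ref{sec: toric posets} to rewrite $\psitor$ as an iterated sum, and reduce everything to the following single identity, which I call $(\star)$: for a symbol $p$ and two words $\alpha=(\alpha_1,\dots,\alpha_k)$, $\beta=(\beta_1,\dots,\beta_\ell)$ over disjoint alphabets, neither containing $p$, with $k,\ell\ge 1$,
\[
\sum_{c\,\in\,\alpha\shuffle\beta}\frac{1}{(x_p-x_{c_1})(x_{c_1}-x_{c_2})\cdots(x_{c_{k+\ell-1}}-x_{c_{k+\ell}})(x_{c_{k+\ell}}-x_p)}=0 ,
\]
where $\alpha\shuffle\beta$ is the ordinary shuffle set of the two words. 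The base case of $(\star)$ is precisely the $U(1)$ (photon--decoupling) relation among cyclic Parke--Taylor factors, i.e. the simplest Kleiss--Kuijf relation, which fits the role those relations play elsewhere in the paper.

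\textbf{Reductions.} First suppose $G=G_1\sqcup G_2$ is disconnected with $n\ge 3$ vertices, where (after grouping components) $V(G_1)=A$ and $V(G_2)=B$ are nonempty, so at least one of $|A|,|B|$ is $\ge 2$; let $[Q]=[Q_1]\sqcup[Q_2]$ be the corresponding restrictions. Since there are no $A$--$B$ edges, the characterization of toric total extensions in Section~\ref{sec: toric posets} shows that $[w]\in\torext$ if and only if the restriction of $[w]$ to $A$ lies in $\mathcal{L}_{\rm{tor}}([Q_1])$ and its restriction to $B$ lies in $\mathcal{L}_{\rm{tor}}([Q_2])$, and that the toric total extensions with a prescribed pair of restrictions $([u],[v])$ are exactly the cyclic shuffles of $[u]$ and $[v]$ (the cyclic words on $A\sqcup B$ whose restrictions to $A$ and $B$ recover $[u]$ and $[v]$). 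Writing $\mathrm{CycSh}([u],[v])$ for this set,
\[
\psitor=\sum_{[u]\in\mathcal{L}_{\rm{tor}}([Q_1])}\ \sum_{[v]\in\mathcal{L}_{\rm{tor}}([Q_2])}\ \sum_{[w]\in\mathrm{CycSh}([u],[v])}\Psi_{\rm{tor}}^{[w]}(\mathbf{x}),
\]
and it suffices to show the innermost sum vanishes for each pair. If $|A|=k\ge 2$ (if instead $k=1<\ell$, interchange the roles of the two words), fix representatives $u=(u_1,\dots,u_k)$ and $v=(v_1,\dots,v_\ell)$; each $[w]\in\mathrm{CycSh}([u],[v])$ has a unique representative starting with $u_1$, whose $B$-letters read left to right form some cyclic rotation $(v_j,\dots,v_{j-1})$ of $v$, and the representatives producing that rotation are the words $(u_1,c)$ with $c\in(u_2,\dots,u_k)\shuffle(v_j,\dots,v_{j-1})$. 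Thus the innermost sum is a sum of $\ell$ instances of $(\star)$ with $p=u_1$, hence $0$. In the cut-vertex case, write $V(G)=A\sqcup\{v\}\sqcup B$ with $A,B$ nonempty and (since $G-v$ is disconnected) no $A$--$B$ edges, and let $[Q_1],[Q_2]$ be the restrictions to $A\cup\{v\}$ and $B\cup\{v\}$; again by Section~\ref{sec: toric posets}, $[w]\in\torext$ exactly when its restrictions there lie in $\mathcal{L}_{\rm{tor}}([Q_1])$ and $\mathcal{L}_{\rm{tor}}([Q_2])$, and for a prescribed pair, written (cut at $v$) as $[(v,a_1,\dots,a_{|A|})]$ and $[(v,b_1,\dots,b_{|B|})]$, the toric total extensions with those restrictions are precisely the $[(v,c)]$ with $c\in(a_1,\dots,a_{|A|})\shuffle(b_1,\dots,b_{|B|})$. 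So $\psitor$ is again a sum of instances of $(\star)$ with $p=v$, and the theorem reduces to $(\star)$.

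\textbf{Proof of $(\star)$.} Induct on $k+\ell\ge 2$. If $\ell=1$ (the case $k=1$ is symmetric), let $C=\frac{1}{(x_p-x_{\alpha_1})(x_{\alpha_1}-x_{\alpha_2})\cdots(x_{\alpha_k}-x_p)}$ be the cyclic factor of the ``frame'' cycle $(p,\alpha_1,\dots,\alpha_k)$; inserting $\beta_1$ between a cyclically consecutive pair $(y,z)$ of that cycle turns $C$ into $C\cdot\frac{x_y-x_z}{(x_y-x_{\beta_1})(x_{\beta_1}-x_z)}=C\bigl(\frac{1}{x_{\beta_1}-x_z}-\frac{1}{x_{\beta_1}-x_y}\bigr)$, and summing over all $k+1$ consecutive pairs of the cycle telescopes to $0$. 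For the inductive step with $k,\ell\ge 2$, deleting the last letter $\beta_\ell$ gives a bijection from $\alpha\shuffle\beta$ onto the set of pairs $(c',g)$, where $c'\in\alpha\shuffle(\beta_1,\dots,\beta_{\ell-1})$ and $g$ is a gap in the block of $\alpha$-letters of $c'$ lying after $\beta_{\ell-1}$ (this block consists only of $\alpha$-letters, since $\beta_{\ell-1}$ is the last $\beta$-letter of $c'$). For fixed $c'$, the same telescoping, now over the consecutive pairs of the cycle $(p,c')$ running from $\beta_{\ell-1}$ around to $p$, evaluates the sum over $g$ to $\bigl(\frac{1}{x_{\beta_\ell}-x_p}-\frac{1}{x_{\beta_\ell}-x_{\beta_{\ell-1}}}\bigr)$ times the cyclic factor of $(p,c')$. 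Hence the left-hand side of $(\star)$ factors as $\bigl(\frac{1}{x_{\beta_\ell}-x_p}-\frac{1}{x_{\beta_\ell}-x_{\beta_{\ell-1}}}\bigr)\cdot\sum_{c'\in\alpha\shuffle(\beta_1,\dots,\beta_{\ell-1})}(\text{cyclic factor of }(p,c'))$, which vanishes by the inductive hypothesis.

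\textbf{Main obstacle.} The algebra above is routine; the real content is the structural input quoted from Section~\ref{sec: toric posets} --- that the toric total extensions of a disconnected (respectively, one-vertex-glued) toric poset are exactly the cyclic shuffles (respectively, $v$-anchored shuffles) of the toric total extensions of the two pieces, with no extra constraint imposed and every such shuffle actually occurring. I expect establishing that bijection cleanly, rather than carrying out the telescoping, to be the delicate step, and getting the cut-vertex version right (where the glued vertex $v$ is shared) to be the most careful part of it.
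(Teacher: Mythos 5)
Your overall architecture matches the paper's: both proofs isolate the same key identity --- your $(\star)$ is exactly the paper's Lemma~\ref{Lemma: Inner sum = 0} --- and both reduce the theorem to it by a shuffle decomposition indexed by toric total extensions with a fixed first letter. Where you genuinely diverge is in the proof of $(\star)$. The paper proves Lemma~\ref{Lemma: Inner sum = 0} by splitting the shuffle set according to the last letter, rotating to bring that letter to the front, and invoking the Kleiss--Kuijf relations (Corollary~\ref{cor: Kleiss-Kuijf relations}) twice so that the two resulting Parke--Taylor factors are cyclically equal and cancel; this route rests on Greene's evaluation \eqref{stronglyplanar} for strongly planar posets via Corollary~\ref{Bounded, planar}. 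Your proof of $(\star)$ is instead a direct telescoping induction: the partial-fraction identity $\frac{x_y-x_z}{(x_y-x_{\beta})(x_\beta-x_z)}=\frac{1}{x_\beta-x_z}-\frac{1}{x_\beta-x_y}$ is correct, the base case telescopes around the whole cycle to $0$, and in the inductive step the sum over insertion gaps after $\beta_{\ell-1}$ telescopes to $\bigl(\frac{1}{x_{\beta_\ell}-x_p}-\frac{1}{x_{\beta_\ell}-x_{\beta_{\ell-1}}}\bigr)$ times the cyclic factor of $(p,c')$, after which the inductive hypothesis applies. This is a valid, self-contained, and more elementary argument, with the advantage of not depending on the Kleiss--Kuijf/Greene machinery.

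On the reduction: your structural claim that $\torext$ is exactly the set of cyclic shuffles of elements of $\mathcal{L}_{\rm{tor}}([Q_1])$ and $\mathcal{L}_{\rm{tor}}([Q_2])$ (respectively, $v$-anchored shuffles in the cut-vertex case) is correct, but, as you flag, it is not literally stated in Section~\ref{sec: toric posets} and would need to be derived. The paper avoids this extra step: it applies Proposition~\ref{prop: Toric lin ext reformulations}(ii) to get $\torext=\bigsqcup_{Q'\in[Q]_1}\{[1\hat w]:\hat w\in\mathcal L(Q'-\{1\})\}$, then uses the elementary fact that linear extensions of the disconnected quiver $Q'-\{1\}$ are shuffles of linear extensions of its components, so the innermost sum in the triple sum is exactly $(\star)$ and vanishes term by term. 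That is strictly less work than establishing your global cyclic-shuffle bijection, so if you want to close your flagged gap cheaply, that is the route to take. With that change, your proof is complete; as written, the telescoping argument is a nice alternative proof of the crucial lemma, while the reduction step as you phrase it still leaves a (fillable) gap.
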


In addition, Boussicault, F\'eray, Lascoux, and Reiner characterize the smallest denominator of $\Psi^{P}(\mathbf{x})$ as $\prod_{i < j}(x_i-x_j)$ where
the product runs over directed edges $i \rightarrow j$
in the Hasse diagram $H(P)$.
Section \ref{sec: properties of toric analogue} below will recall the \textit{toric Hasse diagram} $[Q]_{\rm{Hasse}}$ for a toric poset $[Q]$, leading to the following result on the denominator of $\psitor$.

\begin{theorem}
\label{thm: tor denom}
For $[Q]$ a toric poset, $\psitor$ can always be expressed over the denominator of 
\[\displaystyle \prod_{\{i,j\} \in [Q]_{\rm{Hasse}}}(x_i-x_j)\] where we take the product over all edges $\{i,j\}$ in $[Q]_{\rm{Hasse}}$.
\end{theorem}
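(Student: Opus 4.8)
The plan is to prove that the linear form $(x_a - x_b)$ cancels out of $\psitor$ in lowest terms for every pair $\{a,b\}$ with $\{a,b\} \notin [Q]_{\rm{Hasse}}$; once this is known for all such pairs, the reduced denominator of $\psitor$ divides $\prod_{\{i,j\} \in [Q]_{\rm{Hasse}}}(x_i - x_j)$, which is the assertion. We may assume $[Q]$ has $n \ge 3$ elements, the remaining cases being immediate. For $n \ge 3$ the $n$ factors in the denominator of each summand $\Psi_{\rm{tor}}^{[w]}(\mathbf{x})$ are pairwise non-proportional --- they correspond to the $n$ edges of the $n$-cycle $w_1\,w_2 \cdots w_n\,w_1$ --- so each summand, and hence $\psitor$, has at worst a simple pole along $\{x_a = x_b\}$. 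It therefore suffices to show that the residue
\[
\lim_{x_a \to x_b}\ (x_a - x_b)\,\psitor \;=\; 0 \qquad\text{whenever } \{a,b\} \notin [Q]_{\rm{Hasse}},
\]
and only those $[w] \in \torext$ in which $a$ and $b$ are cyclically consecutive contribute to this limit, since otherwise $(x_a - x_b)$ does not divide the denominator of $\Psi_{\rm{tor}}^{[w]}(\mathbf{x})$.

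Let $S \subseteq \torext$ denote the set of toric total extensions in which $a$ and $b$ are cyclically adjacent, and let $\iota\colon S \to S$ be the involution that transposes $a$ and $b$: if $w = (a, b, u_1, \dots, u_{n-2})$ represents an element of $S$, then $\iota([w]) = [(b, a, u_1, \dots, u_{n-2})]$. Granting for the moment that $\iota$ maps $\torext$ into itself (the crux, discussed below), it is a fixed-point-free involution of $S$ for $n \ge 3$, since a cyclic adjacent transposition cannot be reversed by a rotation. A direct computation then gives, with $w' := (a, u_1, \dots, u_{n-2})$ the cyclic word on the $n-1$ letters obtained from $w$ by identifying $a$ and $b$,
\[
(x_a - x_b)\,\Psi_{\rm{tor}}^{[w]}(\mathbf{x})\,\big|_{x_b = x_a} \;=\; \Psi_{\rm{tor}}^{[w']} \qquad\text{while}\qquad (x_a - x_b)\,\Psi_{\rm{tor}}^{[\iota(w)]}(\mathbf{x})\,\big|_{x_b = x_a} \;=\; -\,\Psi_{\rm{tor}}^{[w']},
\]
where the right-hand sides are evaluated with the variable of the merged letter set equal to $x_a$. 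Hence the residue of $\psitor$ along $\{x_a = x_b\}$ is a sum, over the $\iota$-orbits of $S$, of terms that cancel in pairs, so it vanishes.

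The point that requires real work, and that I expect to be the main obstacle, is the well-definedness of $\iota$: if $[w] \in \torext$, the elements $a$ and $b$ are cyclically adjacent in $w$, and $\{a,b\} \notin [Q]_{\rm{Hasse}}$, then the toric total order obtained by transposing $a$ and $b$ is again in $\torext$. This is the toric analogue of the familiar fact that two adjacent elements of a linear extension of an ordinary poset may be swapped unless they form a covering relation, and I would prove it using the hyperplane description of toric posets recalled in Sections~\ref{sec:posets and hyperplanes} and~\ref{sec: toric posets}. A toric total extension of $[Q]$ is a chamber of the toric graphic arrangement of $K_n$ contained in the toric chamber $\mathfrak{c}_{[Q]}$ determined by the toric graphic arrangement of the underlying graph $G$; transposing two cyclically adjacent elements $a$ and $b$ is precisely the operation of crossing a single wall of the finer ($K_n$) arrangement, a wall lying on some hyperplane $\{x_a - x_b = k\}$. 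If $\{a,b\} \notin E(G)$, this hyperplane is not part of the $G$-arrangement at all, so the chambers of $[w]$ and $\iota([w])$ lie on the same side of every wall of $\mathfrak{c}_{[Q]}$; if $\{a,b\} \in E(G)$ but is not a toric covering relation, then by the characterization of $[Q]_{\rm{Hasse}}$ recalled in Section~\ref{sec: properties of toric analogue} no wall of $\mathfrak{c}_{[Q]}$ lies on any $\{x_a - x_b = k\}$, so the shared wall of the chambers of $[w]$ and $\iota([w])$ lies in the interior of $\mathfrak{c}_{[Q]}$. In either case the chamber of $\iota([w])$ is contained in $\mathfrak{c}_{[Q]}$, i.e.\ $\iota([w]) \in \torext$, which is what was needed.

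Assembling the pieces, $\lim_{x_a \to x_b}(x_a - x_b)\psitor = 0$ for every pair with $\{a,b\} \notin [Q]_{\rm{Hasse}}$, so the reduced denominator of $\psitor$ is divisible by no such $(x_a - x_b)$ and hence divides $\prod_{\{i,j\} \in [Q]_{\rm{Hasse}}}(x_i - x_j)$. I anticipate the residue computation of the second paragraph to be routine; the stability of $\torext$ under $\iota$ carries the content, and if Section~\ref{sec: toric posets} supplies a purely combinatorial description of $\torext$ (for instance via the representatives of $[Q]$ or the output of the toric-total-extension algorithm), that stability could alternatively be checked directly from the description, avoiding the hyperplane picture entirely.
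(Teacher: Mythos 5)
Your proposal is correct, and the core pairing idea is the same as the paper's; what differs is how you justify it. The paper works combinatorially from Proposition~\ref{prop: Toric lin ext reformulations}(i): it splits into the case where $a,b$ lie on a toric directed path of length $>1$ (and argues directly that no representative of $[Q]$ has a linear extension with $a,b$ adjacent, so in your notation $S=\emptyset$) and the case where $a,b$ are torically incomparable (where it pairs the adjacent linear extensions of a representative $Q'$ by swapping $a,b$ and verifies an explicit two-term rational identity, rather than computing a residue). Your version establishes the stability of $\torext$ under the transposition geometrically --- that crossing the shared wall on $\{x_a=x_b\bmod 1\}$ stays inside the toric chamber $\mathfrak{c}_{[Q]}$ --- which is a nice uniform argument that absorbs both of the paper's cases at once (the first becoming the vacuous assertion about an empty $S$), and your residue computation is equivalent to the paper's identity once you've noted the pole is at worst simple. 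Two things to tighten. First, in your subcase $\{a,b\}\in E(G)$ but $\{a,b\}\notin[Q]_{\rm{Hasse}}$, the claim that no wall of $\mathfrak{c}_{[Q]}$ lies on $\{x_a-x_b=k\}$ is not a ``characterization'' appearing in the paper; it does hold, via \cite[Cor.\ 9.2]{toric} ($\mathfrak{c}_{[Q]}$ is also a chamber of the coarser arrangement $\mathcal{A}_{\rm{tor}}([Q]_{\rm{Hasse}})$, and chamber boundaries lie in the arrangement), so either supply that citation or, more cleanly, invoke Lemma~\ref{lem: L-tor-independent-of-graph} and take $G$ to be the graph of $[Q]_{\rm{Hasse}}$ from the outset, which eliminates this subcase entirely. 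Second, $n\le 2$ is not ``immediate'': for $n=2$ one has $\Psi_{\rm{tor}}^{[Q]}=-1/(x_1-x_2)^2$, which is not expressible over $(x_1-x_2)$ nor over $1$, so the theorem implicitly requires $n\ge 3$ (or $n=1$); the paper's own proof has the same unstated hypothesis, since its pairing also degenerates when $[(1,2)]=[(2,1)]$.
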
 
We note that the denominator in Theorem \ref{thm: tor denom} is not necessarily the smallest (see Remark \ref{Remark: tordenom not necessarily smallest}).
In addition, we show (see Proposition \ref{bounded poset})  that for a certain family of toric posets $[Q]$, $\psitor$ is a multiple of $\Psi^{P}(\mathbf{x})$. Using this result, we are able to recover the Kleiss-Kuijf shuffle relations \cite{Kleiss} by evaluating $\psitor$ for a specific toric poset $[Q]$ from this family (see Figure \ref{fig:KK toric poset family}).

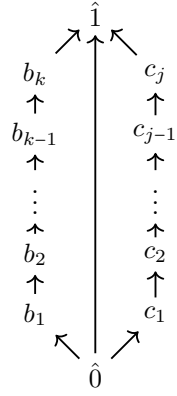
\begin{figure}[H]
\begin{center}
   \begin{tikzpicture}[scale = 0.8]
\node(2) at (0,0) {$\hat{0}$};
\node(3) at (-1,1) {$b_1$};
\node(4) at (-1,2) {$b_2$};
\node(5) at (-1,3) {$\vdots$};
\node(6) at (-1,4) {$b_{k-1}$};
\node(1) at (0,6) {$\hat{1}$};
\node(7) at (-1,5) {$b_k$};
\draw[line width = 0.25 mm, ->] (2) -- (3);
\draw[line width = 0.25 mm, ->] (3) -- (4);
\draw[line width = 0.25 mm, ->] (4) -- (5);
\draw[line width = 0.25 mm, ->] (-1,3.4) -- (6);
\draw[line width = 0.25 mm, ->] (6) -- (7);
\draw[line width = 0.25 mm, ->] (7) -- (1);
\draw[line width = 0.25 mm, ->] (2) -- (1);
\node(8) at (1,1) {$c_1$};
\node(9) at (1,2) {$c_2$};
\node(10) at (1,3) {$\vdots$};
\node(11) at (1,4) {$c_{j-1}$};
\node(12) at (1,5) {$c_j$};
\draw[line width = 0.25 mm, ->] (2) -- (8);
\draw[line width = 0.25 mm, ->] (8) -- (9);
\draw[line width = 0.25 mm, ->] (9) -- (10);
\draw[line width = 0.25 mm, ->] (1,3.4) -- (11);
\draw[line width = 0.25 mm, ->] (11) -- (12);
\draw[line width = 0.25 mm, ->] (12) -- (1);
\end{tikzpicture} 
    \caption{One representative of $[Q]$ from Corollary \ref{cor: Kleiss-Kuijf relations}.}
    \label{fig:KK toric poset family}
    \end{center}
\end{figure}

Let $\mathbf{b}=(b_1,b_2, \ldots, b_k)$, $\mathbf{c}= (c_1,c_2,\ldots, c_j)$, and $\mathrm{rev}(\mathbf{b})=(b_k,\ldots,b_2,b_1)$. As convention, let $b_{k+1}=c_{j+1}= \hat{1}$ and $b_0=c_0=\hat{0}$.
 The \textit{shuffle set} $\mathbf{b} \shuffle \mathbf{c}$ is the set of all permutations of $(b_1,\ldots,b_k,c_1,\ldots,c_j)$ such that the subsequences of the $b_i$ and $c_i$ appear in the same order as in $\mathbf{b}$ and $\mathbf{c}$ respectively.

\begin{cor}({Kleiss-Kuijf Shuffle Relations}) \label{cor: Kleiss-Kuijf relations}
 For $\psitor$ where $[Q]$ is the toric poset in Figure \ref{fig:KK toric poset family},
$$
\psitor = \frac{(-1)^k}{\displaystyle {\prod_{r=0}^k (x_{b_{r+1}}-x_{b_r}) \cdot \prod_{s=0}^j (x_{c_s}-x_{c_{s+1}}) }}$$
or equivalently,

\begin{align*}
\sum_{\mathbf{a} \in \mathbf{b} \shuffle  \mathbf{c}} \Psi_{\rm{tor}}^{[(\hat{1},\hat{0},\mathbf{a})]}(\mathbf{x})  &= (-1)^k \Psi_{\rm{tor}}^{[(\hat{1},\rm{rev}(\mathbf{b}),\hat{0},\mathbf{c})]}(\mathbf{x}).
\end{align*}
\end{cor}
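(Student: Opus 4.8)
The plan is to derive the corollary as a direct application of the announced Proposition \ref{bounded poset}, which says that for the relevant family of toric posets $[Q]$ (those obtained by adjoining a global $\hat 0$ and $\hat 1$ to a bounded poset), $\psitor$ is a scalar multiple of the ordinary Greene function $\Psi^P(\mathbf{x})$ of an associated poset $P$. First I would identify the underlying poset $P$: it is the poset on $\{\hat 0, b_1,\dots,b_k, c_1,\dots,c_j, \hat 1\}$ consisting of the two disjoint chains $\hat 0 \lessdot b_1 \lessdot \cdots \lessdot b_k \lessdot \hat 1$ and $\hat 0 \lessdot c_1 \lessdot \cdots \lessdot c_j \lessdot \hat 1$ glued at their common bottom $\hat 0$ and top $\hat 1$, together with the extra covering relation $\hat 0 \lessdot \hat 1$ present in the picture. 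This $P$ is manifestly strongly planar and connected: its Hasse diagram embeds in the plane with $\hat 0$ at the bottom and $\hat 1$ at the top, the $b$-chain on the left, the $c$-chain on the right, and the edge $\hat 0 \to \hat 1$ drawn as, say, a far-left arc. Its bounded regions are exactly two: the region $\delta_1$ enclosed between the $b$-chain and the edge $\hat 0\to\hat 1$, with $\min(\delta_1)=\hat 0$ and $\max(\delta_1)=\hat 1$; and the region $\delta_2$ enclosed between the $b$-chain and the $c$-chain, again with $\min(\delta_2)=\hat 0$, $\max(\delta_2)=\hat 1$.

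Next I would apply Greene's evaluation \eqref{stronglyplanar} to this $P$. Since there are two bounded regions each contributing the factor $(x_{\hat 0}-x_{\hat 1})$, and the cover relations of $P$ are exactly $\hat 0\lessdot b_1$, $b_r\lessdot b_{r+1}$ for $1\le r\le k-1$, $b_k\lessdot \hat 1$, $\hat 0\lessdot c_1$, $c_s\lessdot c_{s+1}$ for $1\le s\le j-1$, $c_j\lessdot \hat 1$, and $\hat 0\lessdot \hat 1$, formula \eqref{stronglyplanar} gives
\[
\Psi^P(\mathbf{x}) = \frac{(x_{\hat 0}-x_{\hat 1})^2}{(x_{\hat 0}-x_{\hat 1})\cdot \prod_{r=0}^{k}(x_{b_r}-x_{b_{r+1}}) \cdot \prod_{s=0}^{j}(x_{c_s}-x_{c_{s+1}})} = \frac{x_{\hat 0}-x_{\hat 1}}{\prod_{r=0}^{k}(x_{b_r}-x_{b_{r+1}}) \cdot \prod_{s=0}^{j}(x_{c_s}-x_{c_{s+1}})},
\]
using the conventions $b_0=c_0=\hat 0$, $b_{k+1}=c_{j+1}=\hat 1$. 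I would then combine this with the proportionality constant supplied by Proposition \ref{bounded poset}. Rewriting $\prod_{r=0}^{k}(x_{b_r}-x_{b_{r+1}}) = (-1)^{k+1}\prod_{r=0}^{k}(x_{b_{r+1}}-x_{b_r})$ and absorbing the $(x_{\hat 0}-x_{\hat 1}) = -(x_{\hat 1}-x_{\hat 0})$ factor appropriately recovers the sign $(-1)^k$ and the denominator $\prod_{r=0}^k(x_{b_{r+1}}-x_{b_r})\cdot\prod_{s=0}^j(x_{c_s}-x_{c_{s+1}})$ displayed in the statement; the bookkeeping here is routine once the constant from Proposition \ref{bounded poset} is known.

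Finally I would translate the closed-form identity into the shuffle form. By definition $\psitor=\sum_{[w]\in\torext}\Psi_{\rm tor}^{[w]}(\mathbf{x})$, so I need to check that the toric total extensions of $[Q]$ are precisely the cyclic classes $[(\hat 1,\hat 0,\mathbf{a})]$ as $\mathbf{a}$ ranges over $\mathbf{b}\shuffle\mathbf{c}$: a toric total order extends $[Q]$ iff, reading around the cycle starting right after $\hat 1$ and $\hat 0$, the $b_i$ appear in their chain order and the $c_i$ appear in their chain order, which is exactly the shuffle condition (the edge $\hat 0\to\hat 1$ forces $\hat 1$ immediately to follow the block, cyclically, before $\hat 0$ again — more precisely it forces $\hat 0$ and $\hat 1$ to be cyclically adjacent with $\hat 0$ before $\hat 1$). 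Here I expect the small subtlety to be verifying this description of $\torext$ rigorously against the definition of toric total extension from Section \ref{sec: toric posets}, i.e. that no spurious toric extension arises and that the $\hat 0\to\hat 1$ cover really does pin down the cyclic position of $\hat 1$ and $\hat 0$; this is the main (though minor) obstacle, and once it is settled, spelling out $\Psi_{\rm tor}^{[(\hat 1,\hat 0,\mathbf a)]}(\mathbf x)$ for the left side and $\Psi_{\rm tor}^{[(\hat 1,\mathrm{rev}(\mathbf b),\hat 0,\mathbf c)]}(\mathbf x)$ for the right side and matching them to the two sides of the closed-form identity is immediate.
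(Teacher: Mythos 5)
Your overall plan matches the paper's: apply Proposition \ref{bounded poset} (combined with Greene's strongly planar evaluation \eqref{stronglyplanar}, which is exactly Corollary \ref{Bounded, planar}) to get the closed form, then use the bijection $\mathcal{L}(P)\leftrightarrow\torext$ to translate into the shuffle identity. However, there are two issues to flag.

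First, your description of $P$ and its Hasse diagram is off. You write that $P$ includes ``the extra covering relation $\hat 0\lessdot\hat 1$,'' and you count \emph{two} bounded regions accordingly. But when $k,j\ge 1$ we have $\hat 0 < b_1 < \hat 1$, so $\hat 0\lessdot\hat 1$ is not a cover relation of $P$; the edge $\hat 0\to\hat 1$ lives in $Q$, not in $H(P)$. Greene's formula \eqref{stronglyplanar} is to be applied to $H(P)$, which has exactly one bounded region $\delta$ (with $\min(\delta)=\hat 0$, $\max(\delta)=\hat 1$) and denominator $\prod_{r=0}^k(x_{b_r}-x_{b_{r+1}})\cdot\prod_{s=0}^j(x_{c_s}-x_{c_{s+1}})$. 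Your extra region factor $(x_{\hat 0}-x_{\hat 1})$ in the numerator exactly cancels your spurious denominator factor $(x_{\hat 0}-x_{\hat 1})$, so your final expression for $\Psi^P(\mathbf x)$ is correct, but the intermediate reasoning applies Greene's theorem to the wrong drawing. (The paper explicitly warns about this pitfall just after the worked example following Corollary \ref{Bounded, planar}: one must use the bounded regions of $P'$, not of $Q$.)

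Second, for the translation into the shuffle form you flag ``verifying this description of $\torext$ rigorously'' as the main obstacle and sketch a direct cyclic-order argument. This work has already been done in the paper: Proposition \ref{prop:Ltor-for-bounded-posets} gives the bijection $\mathcal{L}(P)\to\torext$, $(\hat 0,w_2,\dots,w_{n-1},\hat 1)\mapsto[(\hat 0,w_2,\dots,w_{n-1},\hat 1)]$, and for the two-chain poset $P$ the linear extensions are exactly $\{(\hat 0,\mathbf a,\hat 1):\mathbf a\in\mathbf b\shuffle\mathbf c\}$. Invoking that proposition closes your gap cleanly and is the route the paper takes.
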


In order to compute $\psitor$ for a toric poset $[Q]$, it is necessary to compute the set $\torext$ of toric total extensions. 
We will show in 
Theorem~\ref{thm: sharp-P-complete} that counting $\torext$ is a $\# P$-complete problem, so that one should not expect efficient algorithms for finding this set.  For theoretical purposes, we will often use a decomposition (see Proposition \ref{prop: Toric lin ext reformulations} part (ii)) that expresses $\torext$ as a disjoint union indexed by the subset $[Q]_v$ of quivers in $[Q]$ having a particular chosen vertex $v$ as a source. 
 Unfortunately, we have no efficient algorithm for computing this subset $[Q]_v$.
Nevertheless, we will derive a somewhat more efficient recursive algorithm (see Theorem \ref{thm:unified-recursion-steps}) to
compute $\torext$, motivated by the following simple and well-known recursive description of the set $\mathcal{L}(P)$ of linear extensions of a poset $P$.

\begin{lemma} \label{lem: basic poset fact}
Let $P$ be a poset, and let $a,b$ be two incomparable elements of $P$. Then, 
\[
\mathcal{L}(P) = \mathcal{L}(P_{a \rightarrow b}) \sqcup \mathcal{L}(P_{b \rightarrow a})
\]
where $P_{a \rightarrow b}$ is obtained from $P$ by adding the relation $a < b$ and $P_{b \rightarrow a}$ is defined similarly.
\end{lemma}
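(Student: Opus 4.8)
The plan is to argue directly from the definition of a linear extension as a total order on $[n]$ that refines the partial order of $P$. First I would check that $P_{a \rightarrow b}$ and $P_{b \rightarrow a}$ are genuinely posets. Adding the relation $a < b$ to $P$ and closing under transitivity yields a partial order precisely when no relation of the form $c <_P c$ is created, i.e. precisely when $b \not<_P a$; this is exactly one half of the hypothesis that $a$ and $b$ are incomparable, and the symmetric statement $a \not<_P b$ guarantees that $P_{b \rightarrow a}$ is likewise a well-defined poset.

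Next, the containments $\mathcal{L}(P_{a \rightarrow b}) \subseteq \mathcal{L}(P)$ and $\mathcal{L}(P_{b \rightarrow a}) \subseteq \mathcal{L}(P)$ are immediate: since $P_{a \rightarrow b}$ (respectively $P_{b \rightarrow a}$) contains every relation of $P$, any total order refining $P_{a \rightarrow b}$ a fortiori refines $P$. Disjointness is equally immediate: a linear extension in $\mathcal{L}(P_{a \rightarrow b})$ places $a$ before $b$, a linear extension in $\mathcal{L}(P_{b \rightarrow a})$ places $b$ before $a$, and a single total order cannot do both.

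It then remains to show that every $w \in \mathcal{L}(P)$ lies in one of the two sets. Because $w$ is a total order, exactly one of $a <_w b$ or $b <_w a$ holds; by symmetry assume $a <_w b$. I claim $w \in \mathcal{L}(P_{a \rightarrow b})$: the order $w$ already refines $P$ and satisfies $a <_w b$, hence refines the relation $P \cup \{a < b\}$, and since the order $<_w$ is transitive it also refines the transitive closure of that relation, which is precisely the partial order of $P_{a \rightarrow b}$. This gives $\mathcal{L}(P) \subseteq \mathcal{L}(P_{a \rightarrow b}) \cup \mathcal{L}(P_{b \rightarrow a})$, and combined with the reverse containments and disjointness above yields the asserted equality $\mathcal{L}(P) = \mathcal{L}(P_{a \rightarrow b}) \sqcup \mathcal{L}(P_{b \rightarrow a})$. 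There is no real obstacle in this argument; the only two points that deserve an explicit word are the well-definedness of $P_{a \rightarrow b}$ and $P_{b \rightarrow a}$ as posets, where incomparability of $a$ and $b$ is used, and the passage to the transitive closure, where transitivity of $<_w$ is used.
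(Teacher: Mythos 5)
Your proof is correct and complete; it is the standard argument, and the paper itself states this lemma without proof as a well-known fact. The two points you single out --- well-definedness of $P_{a \rightarrow b}$ via $b \not<_P a$, and compatibility of $<_w$ with the transitive closure --- are exactly the places where care is needed, and you handle both properly.
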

Our recursion for
$\torext$
will rely on the
following result, which may be of independent interest.

\begin{theorem}
\label{thm: Source-sink equiv with fixed source} 
Let $v$ be any vertex in an acyclic quiver $Q$, and let $Q_1,Q_2$ be any two acyclic quivers in the subset $[Q]_v$ of the source-sink flip-equivalence class $[Q]$, so $v$ is a source in both $Q_1$ and $Q_2$.  

Then there exists a source-sink flip sequence from $Q_1$ to $Q_2$ such that every intermediate quiver in the sequence also has $v$ as a source. In other words, the flip sequence  does not flip at $v$, nor at neighbors of $v$.
\end{theorem}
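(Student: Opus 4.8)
The conclusion is equivalent to the (a priori weaker-sounding) assertion that $Q_1$ and $Q_2$ are joined by a source-sink flip sequence that never flips a vertex of the closed neighborhood $N[v]$. Indeed, if $v$ is a source then a neighbor $w$ of $v$ carries the in-edge $v\to w$, so $w$ is not a source, and flipping $w$ as a sink would produce the edge $w\to v$; hence while $v$ stays a source no flip is ever performed inside $N[v]$, and conversely never flipping inside $N[v]$ leaves every edge at $v$ untouched. So the plan is to prove: every source-sink flip sequence from $Q_1$ to $Q_2$ can be replaced by one that avoids $N[v]$.

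First some reductions and tools. If $G$ is disconnected or $v$ is isolated the claim is immediate (flips in components other than that of $v$, or at the isolated $v$, are irrelevant), so assume $G$ connected. I will use: (i) flips at non-adjacent vertices commute; (ii) two consecutive flips at the same vertex cancel; and (iii) restricting a flip sequence of $G$ to an induced subgraph $H$ gives a flip sequence of $H$ (a flip of $G$ is performed at a source or sink of $G$, hence at a source or sink of $H$), so $Q_1|_H$ and $Q_2|_H$ lie in the same flip class of $H$. Applying (iii) to $H=G[N[v]]$ and, for each edge $ww'$ with $w,w'\in N(v)$, comparing with the two flip classes of the triangle on $\{v,w,w'\}$ (the acyclic orientation with $v$ the source and $w$ the middle vertex is flip-inequivalent to the one with $w'$ in the middle), one gets that $Q_1$ and $Q_2$ agree on every edge having both ends in $N[v]$; thus only edges meeting $V(G)\setminus N[v]$ ever need to be re-oriented.

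Now take a flip sequence $\sigma\colon Q_1=R_0\to\cdots\to R_m=Q_2$ minimizing the number of ``forbidden'' flips (flips at vertices of $N[v]$), and suppose for contradiction this number is positive; let step $t$ be the first forbidden flip. Steps $1,\dots,t-1$ touch no edge at $v$, so $v$ is a source in each of $R_0,\dots,R_{t-1}$; hence the step-$t$ flip is either a source-to-sink flip at $v$ itself, or -- since a neighbor $w$ of $v$ has the in-edge $v\to w$ and so is flippable only as a sink -- a sink-to-source flip at some $w\in N(v)$. If the first forbidden flip is at $v$ and the next one (say step $j$) is also at $v$, then all flips strictly between them are non-forbidden, hence non-adjacent to $v$, so by (i) the step-$t$ flip can be slid rightward to sit immediately before step $j$, whereupon the two flips at $v$ cancel by (ii); this yields a sequence with two fewer forbidden flips, contradicting minimality. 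So, after possibly discarding such a cancelling pair, we are left with the core situation: the first forbidden flip occurs while $v$ is still a source and is a sink-to-source flip at a neighbor $w$ of $v$.

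Ruling out this last configuration is the main obstacle. The intended mechanism mirrors the easy case: flipping such a $w$ makes $w$ a source carrying $w\to v$, so $v$'s in-degree rises from $0$ to $1$; one then tracks the ``excursion'' of consecutive quivers in which $v$ is not a source (observing that inside an excursion $v$ is never flipped, since flipping $v$ would require $v$ to be a sink and would immediately restore it to a source), and argues -- using (i), (ii), and acyclicity arguments of the kind used in the triangle comparison above -- that such an excursion can be shortened and ultimately removed, lowering the count of forbidden flips. The difficulty, and where I expect the real work to lie, is that the flip at $w$ need not commute past an intervening harmless flip that happens to be at a neighbor of $w$, so a genuine normal-form argument (exploiting that the local configurations involved are acyclic) is required to push the forbidden flip to a spot where (ii) applies. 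An alternative packaging is to define a retraction $\pi\colon[Q]\to[Q]_v$ that repeatedly flips a global source of the current quiver lying below $v$ in its transitive closure -- this terminates, as each such flip strictly shrinks the principal order ideal of $v$ -- show that a single flip $R\to R'$ satisfies ``$\pi(R)$ and $\pi(R')$ are joined by a flip sequence inside $[Q]_v$,'' and then apply $\pi$ termwise along an arbitrary flip sequence from $Q_1$ to $Q_2$; here again the entire weight of the proof rests on that last commutation-type claim.
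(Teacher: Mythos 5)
Your proposal correctly reformulates the target (a flip sequence that avoids flipping at $N[v]$ is the same thing as one in which $v$ stays a source) and correctly observes that restriction to induced subgraphs and the triangle/flow-difference argument force $Q_1$ and $Q_2$ to agree on all edges with both endpoints in $N[v]$. However, the proof itself is not completed, and you say so yourself: after reducing (by a slide-and-cancel argument) to the case where the first forbidden flip is a sink-to-source flip at a neighbor $w$ of $v$, you state that ``the entire weight of the proof rests on'' a commutation or excursion-shortening claim that you do not prove. This is a genuine gap, not a stylistic difference. The difficulty you name is real: the flip at $w$ does not commute past intervening flips at neighbors of $w$, and a minimality argument on the number of forbidden flips has no obvious potential function that decreases across such a transposition. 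Your alternative ``retraction $\pi$'' sketch faces the same obstruction (and also leaves unaddressed whether repeatedly flipping ``a global source below $v$'' yields a well-defined, flip-sequence-independent target), so neither packaging closes the gap.

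The paper takes a genuinely different route that sidesteps the transposition problem entirely. It builds a contracted graph $G^v$ by collapsing all vertices torically comparable with $v$ (including $v$) into a single vertex $v^*$, while keeping a copy of each vertex torically incomparable with $v$; edges from $v^*$ to a surviving vertex $u^v$ are added exactly when $u$ is at distance two from $v$ in the toric transitive closure. A map $\Phi\colon [Q]_v \to \{\text{orientations of }G^v\}$ is defined, and one shows (a) the direction of each edge at $v^*$ is well-defined (Proposition~\ref{prop:orientation well defined}), (b) $\Phi(Q')$ is acyclic (Proposition~\ref{prop:no-directed-cycles}), (c) $\Phi(Q_1)\equiv\Phi(Q_2)$ for all $Q_1,Q_2\in[Q]_v$, by invoking Pretzel's flow-difference characterization of flip equivalence on three kinds of circuits (Lemma~\ref{lem: orientations of G^v are flip equiv.}), (d) $\Phi$ intertwines flips away from $v^*$ with flips away from torically-$v$-comparable vertices, and (e) $\Phi$ is a bijection onto the flip class $[\Phi(Q')]$ (Lemma~\ref{Lem: phi is a bijection}). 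With this machinery, the theorem follows by applying the easy Lemma~\ref{lem: Flip seq. avoiding v} (a flip sequence avoiding one chosen vertex) in the small graph $G^v$, avoiding $v^*$, and pulling the sequence back through $\Phi^{-1}$. In short, rather than manipulating an arbitrary flip sequence until it avoids $N[v]$ --- the step your argument cannot finish --- the paper transfers the whole problem to a quotient graph in which avoiding a single vertex is elementary, and uses Pretzel's theorem to certify that the transfer preserves flip equivalence.
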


\begin{remark} \rm The rational function $\Psi_{\rm{tor}}^{[w]}(\mathbf{x})$ appears in scattering amplitude computations as a {\it Parke-Taylor factor} \cite{parketaylor}, as we explain here. Recently, in \cite{Williams}, Parisi, Sherman-Bennett, Tessler, and Williams utilized $\Psi_{\rm{tor}}^{[w]}(\mathbf{x})$ in order to prove a tiling conjecture for the $m=2$ {\it amplituhedron}.
Consider a generic $2 \times n$ matrix

\[A = \begin{pmatrix}
a_{11} & a_{12} & \cdots & a_{1n} \\
a_{21} & a_{22} & \cdots & a_{2n} \\
\end{pmatrix} \in Gr_{2,n}.\]
Then, a Pl\"ucker coordinate $P_{ij}$ in the Grassmannian $Gr_{2,n}$ is the determinant  $\mathrm{det}\begin{pmatrix}
a_{1i} & a_{1j} \\
a_{2i} & a_{2j} \\
\end{pmatrix}=a_{1i}a_{2j}-a_{1j}a_{2i}$. In \cite{Williams}, the authors begin with the following \textit{Parke-Taylor function}
\[\mathrm{PT}(w):= \frac{1}{P_{w_1w_2}P_{w_2w_3}\cdots P_{w_nw_1}}\]
where each $P_{ij}$ is a Pl\"ucker coordinate in $Gr_{2,n}$. 
In their Remark 5.2 and proof of Proposition 5.4, they show how a point in $\widehat{Gr}^{\circ}_{2,n}$, the dense subset of $Gr_{2,n}$ where all Pl\"ucker coordinates are non-vanishing, can be represented with the $2 \times n$ matrix
\[\begin{pmatrix}
1 & 1 & \cdots & 1 \\
x_1 & x_2 & \cdots & x_n
\end{pmatrix}. \]
Then, the Pl\"ucker coordinate $P_{ij}$ of this matrix is the linear factor $x_j-x_i$. Utilizing this fact, they are able to rewrite $\mathrm{PT}(w)$ as the rational function
\[
\mathrm{PT}(w) = \frac{1}{(x_{w_2}-x_{w_1})(x_{w_3}-x_{w_2})\cdots(x_{w_n}-x_{w_{n-1}})(x_{w_1}-x_{w_n})}, \label{PT(w)}
\]
which we recognize (up to $\pm$ sign) as $\Psi_{\rm{tor}}^{[w]}(\mathbf{x})$ in Definition \ref{def: toric Greene}.

Moreover, they use \textit{cyclic extensions} of partial cyclic orders in order to give volume formulas for \textit{Parke-Taylor polytopes} and certain positroid polytopes. Since a toric total extension can be seen to be the same as a total cyclic order \cite{toric}, we are hopeful that our identities proven for $\psitor$ could be useful in similar settings. We further discuss the connection and distinction between $\psitor$ and their work in Section \ref{sec: relationship to tricolored subdivisions}.
\end{remark}

\noindent \textbf{Outline of paper.} Section \ref{sec:posets and hyperplanes} discusses the association between posets and chambers in graphic hyperplane arrangements. Section \ref{sec: toric posets} provides the association between toric posets and chambers in toric graphic hyperplane arrangements. We also discuss properties of toric posets analogous to those of ordinary posets. Section \ref{sec: properties of toric analogue} proves Theorem \ref{thm: Cut vertex}, Theorem \ref{thm: tor denom}, and Corollary \ref{cor: Kleiss-Kuijf relations}. In Section \ref{sec: relationship to tricolored subdivisions}, we discuss how the identities proven in Section \ref{sec: properties of toric analogue} relate to work of Parisi, Sherman-Bennett, Tessler, and Williams  in \cite{Williams}. In Section \ref{Sec: source-sink equivalence}, we prove Theorem \ref{thm: Source-sink equiv with fixed source} and in Section \ref{sec: algorithm}, we provide a recursive algorithm for finding the set of toric total extensions of a toric poset.

\subsection*{Acknowledgements}

The author is very grateful to Vic Reiner for guidance throughout all stages of this project. The author would also like to thank Esther Banaian, Swee Hong Chan, Patricia Commins, Colin Defant, Nick Early, Tucker Ervin, Darij Grinberg, Matt Macauley, and Scott Neville for helpful conversations and references, and Son Nguyen, for his assistance in creating a Sage program to acquire initial data. The author would also like to thank Gregg Musiker for his encouragement of this work.  Work partially supported by NSF DMS-2053288.

\section{Posets and Graphic Hyperplane Arrangements} \label{sec:posets and hyperplanes}
The definition of a toric poset relies on the well-studied association between posets and chambers in graphic hyperplane arrangements  \cite{toric, Zaslavsky, postnikovfaces, stanleyacyclic}, so we first discuss this correspondence. A poset $P$ on $[n]$ gives rise to an open polyhedral cone $c(P)$ in $\mathbb{R}^n$, where
\[
c(P) := \{x \in \mathbb{R}^n: x_i < x_j \text{ if } i <_P j\}.
\]

Connected components in the complement of a graphic hyperplane arrangement are open polyhedral cones called \textit{chambers}, and each cone $c(P)$ appears as a chamber in the complement of at least one graphic hyperplane arrangement. We review how to construct a graphic hyperplane arrangement from a simple graph $G$, i.e. one with no loops nor parallel edges. 

Let $G$ be a simple, undirected graph on the vertex set $[n]$, so $G \subseteq \binom{[n]}{2}$. Then, the \textit{graphic hyperplane arrangement} $\mathcal{A}(G)$ is defined to be
\[ \mathcal{A}(G):=\bigcup_{\{i,j\} \in G} \mathcal{H}_{ij}\]
where $\mathcal{H}_{ij}$ is the hyperplane $x_i=x_j$.

A quiver is a directed graph, and an acyclic quiver is one that contains no directed cycles. There is a one-to-one
 correspondence between chambers in $\mathbb{R}^n - \mathcal{A}(G)$ and acyclic quivers that have the same underlying graph $G$.  Given such a chamber, for every pair of vertices $i,j$ such that $\{i,j\} \in G$, we orient this edge $i \to j$ if $x_i < x_j$ and orient the edge $j \to i$ otherwise. It follows that this quiver is acyclic. Moreover, any acyclic quiver on $n$ vertices induces a poset structure on $n$ elements. In particular, we set $i < j$ in the poset whenever there is a directed path from $i$ to $j$ in the quiver.

\begin{example} \rm We show two graphs on three vertices and consider their associated graphic hyperplane arrangements. For each arrangement, we label the chambers by the posets induced by acyclic orientations of the corresponding graph and each picture is drawn inside the two-dimensional hyperplane $x_1+x_2+x_3=0$ in $\mathbb{R}^3$. 
\begin{center}
\begin{tabular}{cc}

\begin{tikzpicture}[scale = 2]
\draw[blue, line width = 0.35 mm, <->] (-0.5,-0.866) -- (0.5,  0.866) node[anchor=west, yshift = 0.4 cm, xshift = -0.6 cm] {$x_2=x_3$};
\draw[line width = 0.35 mm, blue, <->] (-0.5,0.866) -- (0.5,  -0.866) node[anchor=west, yshift= -0.55 cm, xshift = -0.6 cm ] {$x_1=x_3$};
\draw[blue, line width = 0.35 mm, <->] (-1,0) -- (1,0) node[anchor=west] {$x_1=x_2$};
\node(0) at (-0.65,0.5){\begin{tikzpicture}[scale = 1]
\node(3) at (0,0){$3$};
\node(1) at (0,0.75){$1$};
\node(2) at (0,1.5){$2$};
\draw[->] (3)  -- (1);
\draw[->] (1)  -- (2);
\end{tikzpicture}};

\node(1) at (0,0.6) {\begin{tikzpicture}[scale = 1]
\node(1) at (0,0){$1$};
\node(3) at (0,0.75){$3$};
\node(2) at (0,1.5){$2$};
\draw[->] (1)  -- (3);
\draw[->] (3)  -- (2);
\end{tikzpicture}};

\node(3) at (0.65,0.5) {\begin{tikzpicture}[scale = 1]
\node(1) at (0,0){$1$};
\node(2) at (0,0.75){$2$};
\node(3) at (0,1.5){$3$};
\draw[->] (1)  -- (2);
\draw[->] (2)  -- (3);
\end{tikzpicture}};

\node(4) at (0,-0.65) {\begin{tikzpicture}[scale = 1]
\node(2) at (0,0){$2$};
\node(3) at (0,0.75){$3$};
\node(1) at (0,1.5){$1$};
\draw[->] (2)  -- (3);
\draw[->] (3)  -- (1);
\end{tikzpicture}};

\node(5) at (-0.65,-0.55) {\begin{tikzpicture}[scale = 1]
\node(3) at (0,0){$3$};
\node(2) at (0,0.75){$2$};
\node(1) at (0,1.5){$1$};
\draw[->] (3)  -- (2);
\draw[->] (2)  -- (1);
\end{tikzpicture}};

\node(6) at (0.65,-0.55) {\begin{tikzpicture}[scale = 1]
\node(2) at (0,0){$2$};
\node(1) at (0,0.75){$1$};
\node(3) at (0,1.5){$3$};
\draw[->] (2)  -- (1);
\draw[->] (1)  -- (3);
\end{tikzpicture}};

\node(7) at (-1.5,0) {\begin{tikzpicture}
\node(1) at (-0.5,0){$1$};
\node(2) at (0.5,0){$2$};
\node(3) at (0,1){$3$};
\draw[line width = 0.35 mm, -] (1)  -- (3);
\draw[line width = 0.35 mm, -] (2)  -- (3);
\draw[line width = 0.35 mm, -] (1)  -- (2);
\end{tikzpicture}};

\end{tikzpicture} &

\begin{tikzpicture}[scale = 2]
\draw[line width = 0.35 mm, <->, dotted, blue, opacity = 0.4] (-0.5,-0.866) -- (0.5,  0.866) node[opacity = 1.0, anchor=west, yshift = 0.4 cm, xshift = -0.6 cm] {$x_2=x_3$};
\draw[blue, line width = 0.35 mm, <->] (-0.5,0.866) -- (0.5,  -0.866) node[anchor=west, yshift= -0.55 cm, xshift = -0.6 cm]
{$x_1=x_3$};
\draw[blue, line width = 0.35 mm, <->] (-1,0) -- (1,0) node[anchor=west] {$x_1=x_2$};
\node(0) at (-1.5,0){\begin{tikzpicture}
\node(1) at (-0.5,0){$1$};
\node(2) at (0.5,0){$2$};
\node(3) at (0,1){$3$};
\draw[line width = 0.35 mm, -] (1)  -- (3);
\draw[line width = 0.35 mm, -] (1)  -- (2);
\end{tikzpicture}};

\node(1) at (-0.65,0.5){\begin{tikzpicture}[scale = 1]
\node(3) at (0,0){$3$};
\node(1) at (0,0.75){$1$};
\node(2) at (0,1.5){$2$};
\draw[->] (3)  -- (1);
\draw[->] (1)  -- (2);
\end{tikzpicture}};

\node(2) at (-0.4, -0.5){\begin{tikzpicture}[scale = 1]
\node(2) at (-0.5,0){$2$};
\node(3) at (0.5,0){$3$};
\node(1) at (0,1){$1$};
\draw[->] (2)  -- (1);
\draw[->] (3)  -- (1);
\end{tikzpicture}};

\node(3) at (0.4, 0.5){\begin{tikzpicture}[scale = 1]
\node(2) at (-0.5,1){$2$};
\node(3) at (0.5,1){$3$};
\node(1) at (0,0){$1$};
\draw[ ->] (1)  -- (2);
\draw[ ->] (1)  -- (3);
\end{tikzpicture}};

\node(6) at (0.65,-0.55) {\begin{tikzpicture}[scale = 1]
\node(2) at (0,0){$2$};
\node(1) at (0,0.75){$1$};
\node(3) at (0,1.5){$3$};
\draw[->] (2)  -- (1);
\draw[->] (1)  -- (3);
\end{tikzpicture}};
\end{tikzpicture}
\end{tabular}
\end{center}
\end{example}

\noindent In addition, a poset $P$ is also determined by its set of linear extensions. Each linear extension $(w_1, w_2, \ldots, w_n)$ corresponds to a chamber 
\[
c_w:= \{\mathbf{x} \in \mathbb{R}^n: x_{w_1} < x_{w_2} < \cdots < x_{w_n}\}
\]
in the complement of the complete graphic hyperplane arrangement $\mathcal{A}(K_n)$, also known as the braid arrangement. From this observation, we have
\begin{equation}
\overline{c(P)}=\bigcup_{w \in \mathcal{L}(P)} \overline{c}_w, \label{cones of braid arrangement}
\end{equation}
where $\overline{(\cdot)}$ denotes topological closure.

\begin{example} \rm
Let $P$ be the partial order on the set $\{1,2,3\}$, where $1<_P 2$ and $1<_P 3$. Consider the Hasse diagram $H(P)$, which is an acyclic quiver. We draw the graphic hyperplane arrangement for $H(P)$ as well the braid arrangement/graphic hyperplane arrangement for the complete graph on 3 vertices. Note that we draw each picture inside the two-dimensional hyperplane $x_1+x_2+x_3=0$ in $\mathbb{R}^3$.
\begin{center} 
\begin{tabular}{ccccc}
\begin{tikzpicture}
\node(1) at (0,0){1};
\node(2) at (-0.5,1){2};
\node(3) at (0.5,1){3};
\draw[line width = 0.25 mm, ->] (1)  -- (2);
\draw[line width = 0.25 mm, ->](1) -- (3);
\node[label = $H(P)$] at (0,-1) {};
\end{tikzpicture}
& & &

\begin{tikzpicture}[scale = 1]
\filldraw[gray,opacity=0.3] (-0.5,0.866) -- (0,0) 
-- (1,0);
\draw[black, thick, <->] (-1,0) -- (1,  0) node[anchor=west] {$x_1=x_2$};
\draw[black, thick, <->] (-0.5,0.866) -- (0.5,  -0.866) node[anchor=west] {$x_1=x_3$};
\node[label = $\mathcal{A}(H(P))$] at (0,-1.7) {};
\end{tikzpicture} & 
\begin{tikzpicture}[scale = 1]
\filldraw[gray,opacity=0.3] (0,0) -- (1,0) -- (0.5,0.866);
\filldraw[gray,opacity=0.3] (-0.5,0.866) -- (0,0) -- (0.5,0.866);
\draw[black, thick, <->] (-0.5,-0.866) -- (0.5,  0.866) node[anchor=west] {$x_2=x_3$};
\draw[black, thick, <->] (-0.5,0.866) -- (0.5,  -0.866) node[anchor=west] {$x_1=x_3$};
\draw[black, thick, <->] (-1,0) -- (1,  0) node[anchor=west] {$x_1=x_2$};
\node[label = $\mathcal{A}(K_3)$] at (0,-1.7) {};
\node[label = $\circ$] at (0.5,0) {};
\node[label = $\square$] at (0,0.2) {};
\end{tikzpicture}
\end{tabular}
\end{center}
We let the shaded region in $\mathbb{R}^3-\mathcal{A}(H(P))$ mark the region where $x_2 > x_1$ and $x_3 > x_1$. In addition, we let the shaded region in $\mathbb{R}^3-\mathcal{A}(K_3)$ that is marked with a circle denote the region where $x_3>x_2>x_1$ and the shaded region marked with a square denote where $x_2 > x_3 > x_1$. Since $\mathcal{L}(P) = \{(1,2,3),(1,3,2)\}$, we see that Equation \eqref{cones of braid arrangement} holds.
\end{example} 
Equation $\eqref{cones of braid arrangement}$ demonstrates that when one fixes the graph $G$, posets (chambers) are determined by their sets of linear extensions. Posets may arise as chambers in several graphic hyperplane arrangements as the graph $G$ varies.

\begin{example} \rm
We consider two different quivers on three elements that induce the same poset $P$ and we compare their graphic hyperplane arrangements. Let $G$ be the underlying graph of $Q$ and let $G'$ be the underlying graph of $Q'$. Although the graphs are different, thereby forcing the graphic hyperplane arrangements to be different, the closures of the cones in the complement of each arrangement are the same.

\begin{center}
\begin{tabular}{ccccccc}
\begin{tikzpicture}
\node(1) at (0,0){1};
\node(2) at (0,1){2};
\node(3) at (0,2){3};
\draw[line width = 0.25 mm, ->] (1)  -- (2);
\draw[line width = 0.25 mm, ->](2) -- (3);
\node[label = $Q$] at (0,-1) {};
\end{tikzpicture}
&&&

\begin{tikzpicture}[scale = 1]
\filldraw[gray,opacity=0.3] (0,0) -- (1,0) -- (0.5,0.866);
\draw[black, thick, <->] (-0.5,-0.866) -- (0.5,  0.866) node[anchor=west] {$x_2=x_3$};
\draw[black, thick, <->] (-1,0) -- (1,  0) node[anchor=west] {$x_1=x_2$};
\node[label = $\mathcal{A}(G)$] at (0,-1.7) {};
\end{tikzpicture}

& 
\hspace{2 cm}

\begin{tikzpicture}
\node(1) at (0,0){1};
\node(2) at (0,1){2};
\node(3) at (0,2){3};
\draw[line width = 0.25 mm, ->] (1)  -- (2);
\draw[line width = 0.25 mm, ->](2) -- (3);
\draw[line width = 0.25 mm, ->] (1) to [out = 45, in = 315] (3);
\node[label = $Q'$] at (0,-1) {};
\end{tikzpicture}

&&

\begin{tikzpicture}[scale = 1]
\filldraw[gray,opacity=0.3] (0,0) -- (1,0) -- (0.5,0.866);
\draw[black, thick, <->] (-0.5,-0.866) -- (0.5,  0.866) node[anchor=west] {$x_2=x_3$};
\draw[black, thick, <->] (-0.5,0.866) -- (0.5,  -0.866) node[anchor=west] {$x_1=x_3$};
\draw[black, thick, <->] (-1,0) -- (1,  0) node[anchor=west] {$x_1=x_2$};
\node[label = $\mathcal{A}(G')$] at (0,-1.7) {};
\end{tikzpicture}

\end{tabular}
\end{center}

\end{example}
Thus, there is ambiguity when identifying $P$ with an {\it acyclic quiver} $Q$. Although there is ambiguity, there are two natural choices when identifying a poset $P$ with
such an acyclic quiver:
\begin{enumerate}
\item the {\it transitive closure} $\overline{P}$, whose directed edges  $i \rightarrow j$ are the relations $i < j$ in $P$.

\item the {\it Hasse diagram} $H(P)$, whose directed edges  $i \rightarrow j$ are the covering relations $i \lessdot j$ in $P$.
\end{enumerate}

\begin{example} \rm
We look at three quivers that induce the same poset $P$.

\begin{center}
\begin{tabular}{ccc}
\begin{tikzpicture}[scale = 0.8]
\node(1) at (0,0) {$1$};
\node(2) at (0,1) {$2$};
\node(3) at (-1,2) {$3$};
\node(4) at (1,2) {$4$};
\node(5) at (2,3) {$5$};
\node[label = $H(P)$] at (0,-1.5) {};
\draw[line width = 0.25 mm,->] (1) -- (2);
\draw[line width = 0.25 mm,->] (2) -- (3);
\draw[line width = 0.25 mm,->] (2) -- (4);
\draw[line width = 0.25 mm,->] (4) -- (5);
\end{tikzpicture} & 
\begin{tikzpicture}[scale = 0.8]
\node(1) at (0,0) {$1$};
\node(2) at (0,1) {$2$};
\node(3) at (-1,2) {$3$};
\node(4) at (1,2) {$4$};
\node(5) at (2,3) {$5$};
\node[label = $P$] at (0,-1.5) {};
\draw[line width = 0.25 mm,->] (1) -- (2);
\draw[line width = 0.25 mm,->] (2) -- (3);
\draw[line width = 0.25 mm,->] (2) -- (4);
\draw[line width = 0.25 mm,->] (4) -- (5);
\draw[line width = 0.25 mm,->] (1) to [out = 15, in = 270, looseness = 0.8] (5);
\draw[line width = 0.25 mm,->] (2) to [out = 0, in = 240, looseness = 0.8] (5);
\end{tikzpicture}  & 
\begin{tikzpicture}[scale=0.8]
\node(1) at (0,0) {$1$};
\node(2) at (0,1) {$2$};
\node(3) at (-1,2) {$3$};
\node(4) at (1,2) {$4$};
\node(5) at (2,3) {$5$};
\node[label = $\overline{P}$] at (0,-1.5) {};
\draw[line width = 0.25 mm,->] (1) -- (2);
\draw[line width = 0.25 mm,->] (1) -- (4);
\draw[line width = 0.25 mm,->] (2) -- (3);
\draw[line width = 0.25 mm,->] (2) -- (4);
\draw[line width = 0.25 mm,->] (4) -- (5);
\draw[line width = 0.25 mm,->] (1) -- (3);
\draw[line width = 0.25 mm,->] (1) to [out = 15, in = 270, looseness = 0.8] (5);
\draw[line width = 0.25 mm,->] (2) to [out = 0, in = 240, looseness = 0.8] (5);
\end{tikzpicture}
\end{tabular}
\end{center}
\end{example}

\section{Toric Posets} \label{sec: toric posets}
In \cite{toric}, Develin, Macauley, and Reiner introduce toric posets. Throughout the paper, we may distinguish toric posets from posets, by calling posets ``ordinary" posets. In order to define toric posets, we begin with toric graphic hyperplane arrangements which are the source of the name ``toric" poset. Given an undirected graph $G$ on $n$ vertices, we saw before that there is an associated graphic hyperplane arrangement $\mathcal{A}(G)$ inside $\mathbb{R}^n$. We define a quotient map 
$\pi: \mathbb{R}^n \rightarrow \mathbb{R}^n/\mathbb{Z}^n$. The \textit{toric graphic hyperplane arrangement} associated to $G$ is 
\[\mathcal{A}_{\rm{tor}}(G)=\pi(\mathcal{A}(G)).\] A connected component of $\mathbb{R}^n/\mathbb{Z}^n - \mathcal{A}_{\rm{tor}}(G)$ is a \textit{toric chamber}. A \textit{toric poset} is a set that arises as a toric chamber in a toric graphic hyperplane arrangement for at least one graph $G$.

Naturally, given $\mathbf{x}, \mathbf{y} \in \mathbb{R}^n$, we know that these points lie in the same equivalence class in $\mathbb{R}^n/\mathbb{Z}^n$ exactly when for each coordinate $1 \leq i \leq n$, we have $x_i \mod 1 = y_i \mod 1$. Therefore, we can still recover an acyclic quiver with underlying graph $G$ for each point $[\mathbf{x}] \in \mathbb{R}^n/\mathbb{Z}^n$ by orienting $\{i,j\} \in G$ as $i \to j$ if $x_i \mod 1 < x_j \mod 1$ and orienting $\{i,j\}$ as $j \rightarrow i$  otherwise.

\begin{quote}
\textit{Key point:} By this construction, two points in the same toric chamber do not necessarily map to the same acyclic quiver. To account for this, the following \textit{flip operation} is defined.
\end{quote}

\begin{definition}[\cite{toric}]
\rm
Consider acyclic quivers $Q_1, Q_2$ that differ by converting one source vertex (all edges directed outward) to one sink vertex (all edges directed inward). Then, we say that $Q_1,Q_2$ differ by a \textit{flip}. This flip operation induces an equivalence relation on the set of acyclic quivers with the same underlying graph $G$, and we denote this equivalence relation as $\equiv$.

\end{definition}
\begin{remark}  \label{Rmk: literature review} \rm
This flip operation was studied by Mosesian and Pretzel in \cite{mosesian} and \cite{pretzel}, respectively. Moreover, this flip operation has appeared in other works including Chen \cite{chen}, Defant and Kravitz \cite{defant}, Eriksson and Eriksson \cite{eriksson}, Macauley and Mortveit \cite{macauley2009posets}, Speyer \cite{speyer}, and Propp \cite{Propp}. This flip operation also appears in the context of reflection functors in quiver representations \cite{bernstein}.

These flips are an instance of quiver mutation at a sink or source vertex \cite{fomin2}. Caldero and Keller show that for two mutation-equivalent acyclic quivers $Q_1,Q_2$, there exists a source-sink flip sequence one can apply to $Q_1$ that yields a quiver isomorphic after relabeling vertices to $Q_2$, but not necessarily equal to $Q_2$ \cite[Cor. 4]{caldero} (also seen in \cite{fomin2017introduction}). For the definition of mutation, see \cite{williams2014cluster}. In Example \ref{Ex. caldero-keller}, we show an example of this result. These connections are our motivation to refer to directed graphs as ``quivers."
\end{remark}

With ordinary posets, we saw that there is a bijection between chambers of $\mathcal{A}(G)$ and the set of acyclic quivers with underlying graph $G$. For toric posets, we have the following theorem.
\begin{theorem} \emph{(\cite[Thm. 1.4]{toric})}
    There is a bijection between the chambers of $\mathcal{A}_{\rm{tor}}(G)$ and the set of acyclic quivers with underlying graph $G$ equipped with $\equiv$.
\end{theorem}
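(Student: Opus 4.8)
The plan is to build the bijection explicitly out of the fractional‑part construction already described and to identify its fibers with the toric chambers. Write $\{t\}=t-\lfloor t\rfloor$, and for $[\mathbf{x}]\in\mathbb{R}^n/\mathbb{Z}^n$ off $\mathcal{A}_{\rm{tor}}(G)$ let $\Phi([\mathbf{x}])$ be the quiver on $G$ with $i\to j$ whenever $\{x_i\}<\{x_j\}$. First I would record the routine facts: $\Phi$ is well defined (if $[\mathbf{x}]\notin\mathcal{A}_{\rm{tor}}(G)$ then $x_i-x_j\notin\mathbb{Z}$ for every edge, so $\{x_i\}\ne\{x_j\}$, and the rule only sees fractional parts), and $\Phi([\mathbf{x}])$ is acyclic because on the representative lying in $[0,1)^n$ its edges are oriented consistently with the linear order of the coordinates. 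Moreover, in that representative each fiber $\Phi^{-1}(Q)$ is the set $\{\mathbf{y}\in[0,1)^n : y_i<y_j \text{ for every edge } i\to j \text{ of } Q\}$, an intersection of open half‑spaces with the unit box, hence convex, hence connected, and nonempty since the poset of the acyclic quiver $Q$ has a linear extension that may be realized with coordinates in $(0,1)$. Being connected and disjoint from $\mathcal{A}_{\rm{tor}}(G)$, each fiber lies in a single toric chamber $D(Q)$; and since every point of the complement of $\mathcal{A}_{\rm{tor}}(G)$ lies in some fiber, $Q\mapsto D(Q)$ surjects onto toric chambers.

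The core of the argument is to understand how $\Phi$ changes along a path. Given a path inside a toric chamber, I would lift it to $\mathbb{R}^n$; the lift is connected and misses the preimage $\pi^{-1}(\mathcal{A}_{\rm{tor}}(G))$, which is the periodic arrangement $\widetilde{\mathcal{A}}(G)=\bigcup_{\{i,j\}\in G,\; m\in\mathbb{Z}}\{x_i-x_j=m\}$, so it stays inside one convex chamber of $\widetilde{\mathcal{A}}(G)$. Inside that chamber $\Phi$ is locally constant away from the integer walls $\{x_i\in\mathbb{Z}\}$, and crossing $\{x_i=m\}$ transversally changes $\lfloor x_i\rfloor$ by $1$, moving $\{x_i\}$ from near $1$ to near $0$ (or back): every edge at $i$ reverses and nothing else changes, so the quiver changes by a flip at $i$. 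Perturbing the lift to cross integer walls one at a time — they are locally finite and the path is compact, so there are finitely many — it follows that the quivers occurring in a single toric chamber form one $\equiv$‑class. Hence $Q\mapsto D(Q)$ factors through $\equiv$, yielding a surjection $\overline{\Phi}$ from $\equiv$‑classes onto toric chambers.

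For injectivity of $\overline{\Phi}$ I would prove the converse, that $Q\equiv Q'$ implies $D(Q)=D(Q')$; by symmetry and transitivity it suffices to treat one flip. Suppose $v$ is a sink of $Q$ and $Q'$ is obtained by flipping $v$ to a source. Pick $\mathbf{y}\in[0,1)^n$ representing a point of $\Phi^{-1}(Q)$ with all coordinates in $(0,1)$; as $v$ is a sink, $y_v>y_j$ for every neighbor $j$. Slide the $v$‑coordinate linearly from $y_v$ up to $1+\mu$ for a fixed $\mu$ with $0<\mu<\min\{y_j : j \text{ a neighbor of } v\}$. Only the differences $y_v-y_j$ across edges at $v$ change, and each runs monotonically from a value in $(0,1)$ to $1+\mu-y_j\in(0,1)$, so it never hits an integer: the path avoids $\widetilde{\mathcal{A}}(G)$, its image stays in a single toric chamber, it crosses $\{x_v\in\mathbb{Z}\}$ exactly once, and its endpoint — equal in the torus to $\mathbf{y}$ with $v$‑coordinate $\mu$ — represents a point of $\Phi^{-1}(Q')$ since now $v$ is a source. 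Thus $D(Q)=D(Q')$. Combined with the previous paragraph, $D(Q_1)=D(Q_2)$ if and only if $Q_1\equiv Q_2$, so $\overline{\Phi}$ is the desired bijection between $\equiv$‑classes of acyclic quivers on $G$ and toric chambers.

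I expect the only genuine subtlety to be the local analysis at an integer wall — verifying that crossing $\{x_i=m\}$ while staying off $\widetilde{\mathcal{A}}(G)$ does force vertex $i$ to be a sink on one side and a source on the other — together with the elementary but slightly fiddly check that the sliding path in the last paragraph never meets $\widetilde{\mathcal{A}}(G)$. Everything else is convexity of chambers and of fibers, the covering‑space property of $\pi$, and local finiteness of $\widetilde{\mathcal{A}}(G)$.
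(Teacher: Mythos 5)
The paper cites this as \cite[Thm.~1.4]{toric} without reproducing the proof; your argument is correct and is essentially the standard lift-to-$\mathbb{R}^n$ argument used there. You correctly isolate the two real points of content: a transverse crossing of an integer wall $\{x_i\in\mathbb{Z}\}$ that stays off the periodic arrangement $\widetilde{\mathcal{A}}(G)$ must reverse all edges at $i$ and nothing else, hence is a source--sink flip at $i$; and conversely the sliding homotopy in your last paragraph shows a single flip at a sink does not change the toric chamber, so $Q\mapsto D(Q)$ descends to a well-defined bijection on $\equiv$-classes.
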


With $\equiv$ defined, we can define toric posets in a combinatorial way. 
\begin{definition} \rm
A \textit{toric poset} $[Q]$ is an equivalence class of acyclic quivers that are equivalent under the relation of flipping a sink vertex to a source vertex and vice versa.
\end{definition}
We recall Example \ref{Ex: toric poset example} for an example of a toric poset.

\begin{example} \rm \label{Ex. caldero-keller} In the graph below, we show all quivers that are mutation equivalent to the quiver $1 \rightarrow 2 \rightarrow 3$. We draw an edge labeled $\mu_i$ between two quivers if they are related by mutating at vertex $i$. We note that mutation is an involution. The subgraph $H$ highlighted in blue is the source-sink flip equivalence class of $1 \rightarrow 2 \rightarrow 3$, i.e. the toric poset that contains the quiver $1 \rightarrow 2 \rightarrow 3$. We emphasize that any two acyclic quivers in this picture can be related by a sequence of steps that involve both source-sink flips and relabeling vertices.

\begin{center}
\begin{tikzpicture}[scale = 1.2]
\draw (-6,0.95) circle (0.58cm);
\node(1) at (-6,1) {$\begin{tikzpicture}[scale = 0.85]
\node (60) at (0,1) {\small{$1$}};
\node (61) at (0.5,0) {\small{$2$}};
\node (62) at (-0.5,0) {\small{$3$}};
\draw[->] (60) -- (61);
\draw[->] (60) -- (62);
\end{tikzpicture} $};
\node(2) at (-4,1) {$\begin{tikzpicture}[scale = 0.85]
\node (60) at (0,1) {\small{$1$}};
\node (61) at (0.5,0) {\small{$2$}};
\node (62) at (-0.5,0) {\small{$3$}};
\draw[->] (61) -- (60);
\draw[->] (62) -- (60);
\end{tikzpicture} $};

\draw (-4,0.95) circle (0.58cm);

\node(3) at (-1,1) {$\begin{tikzpicture}[scale = 0.85]
\node (60) at (0,1) {\small{\color{blue}{$1$}}};
\node (61) at (0.5,0) {\small{\color{blue}{$2$}}};
\node (62) at (-0.5,0) {\small\color{blue}{{$3$}}};
\draw[->,blue] (60) -- (61);
\draw[->,blue] (62) -- (61);
\end{tikzpicture} $};

\draw[blue] (-1,0.95) circle (0.58cm);

\node(4) at (1,1) {$\begin{tikzpicture}[scale = 0.85]
\node (60) at (0,1) {\small{\color{blue}{$1$}}};
\node (61) at (0.5,0) {\small{\color{blue}{$2$}}};
\node (62) at (-0.5,0) {\small{\color{blue}{$3$}}};
\draw[->, blue] (61) -- (60);
\draw[->, blue] (61) -- (62);
\end{tikzpicture} $};

\draw[blue] (1,0.95) circle (0.58cm);

\node(5) at (4,1) {$\begin{tikzpicture}[scale = 0.85]
\node (60) at (0,1) {\small{$1$}};
\node (61) at (0.5,0) {\small{$2$}};
\node (62) at (-0.5,0) {\small{$3$}};
\draw[->] (62) -- (61);
\draw[->] (62) -- (60);
\end{tikzpicture} $};

\draw (4,0.95) circle (0.58cm);

\node(6) at (6,1) {$\begin{tikzpicture}[scale = 0.85]
\node (60) at (0,1) {\small{$1$}};
\node (61) at (0.5,0) {\small{$2$}};
\node (62) at (-0.5,0) {\small{$3$}};
\draw[->] (60) -- (62);
\draw[->] (61) -- (62);
\end{tikzpicture} $};
\draw (6,0.95) circle (0.58cm);
\node(7) at (-5,-1) {$\begin{tikzpicture}[scale = 0.85]
\node (60) at (0,1) {\small{$1$}};
\node (61) at (0.5,0) {\small{$2$}};
\node (62) at (-0.5,0) {\small{$3$}};
\draw[->] (60) -- (61);
\draw[->] (62) -- (60);
\end{tikzpicture} $};
\draw (-5,-1.1) circle (0.58cm);

\node(8) at (0,-1) {$\begin{tikzpicture}[scale = 0.85]
\node (60) at (0,1) {\small{\color{blue}{$1$}}};
\node (61) at (0.5,0) {\small{\color{blue}{$2$}}};
\node (62) at (-0.5,0) {\small{\color{blue}{$3$}}};
\draw[->, blue] (60) -- (61);
\draw[->, blue] (61) -- (62);
\end{tikzpicture} $};

\draw[blue] (0,-1.05) circle (0.58cm);
\node(9) at (5,-1) {$\begin{tikzpicture}[scale = 0.85]
\node (60) at (0,1) {\small{$1$}};
\node (61) at (0.5,0) {\small{$2$}};
\node (62) at (-0.5,0) {\small{$3$}};
\draw[->] (61) -- (62);
\draw[->] (62) -- (60);
\end{tikzpicture} $};
\draw (5,-1.05) circle (0.58cm);
\node(10) at (0,-3) {$\begin{tikzpicture}[scale = 0.85]
\node (60) at (0,1) {\small{$1$}};
\node (61) at (0.5,0) {\small{$2$}};
\node (62) at (-0.5,0) {\small{$3$}};
\draw[->] (60) -- (62);
\draw[->] (62) -- (61);
\draw[->] (61) -- (60);
\end{tikzpicture} $};
\draw (0,-3.05) circle (0.58cm);
\node(11) at (-5,3) {$\begin{tikzpicture}[scale = 0.85]
\node (60) at (0,1) {\small{$1$}};
\node (61) at (0.5,0) {\small{$2$}};
\node (62) at (-0.5,0) {\small{$3$}};
\draw[->] (61) -- (60);
\draw[->] (60) -- (62);
\end{tikzpicture} $};
\draw (-5,2.9) circle (0.58cm);
\node(12) at (0,3) {$\begin{tikzpicture}[scale = 0.85]
\node (60) at (0,1) {\small{\color{blue}{$1$}}};
\node (61) at (0.5,0) {\small{\color{blue}{$2$}}};
\node (62) at (-0.5,0) {\small{\color{blue}{$3$}}};
\draw[->, blue] (62) -- (61);
\draw[->, blue] (61) -- (60);
\end{tikzpicture} $};
\draw[blue] (0,2.9) circle (0.58cm);
\node(13) at (5,3) {$\begin{tikzpicture}[scale = 0.85]
\node (60) at (0,1) {\small{$1$}};
\node (61) at (0.5,0) {\small{$2$}};
\node (62) at (-0.5,0) {\small{$3$}};
\draw[->] (60) -- (62);
\draw[->] (62) -- (61);
\end{tikzpicture} $};
\draw (5,2.9) circle (0.58cm);
\node(14) at (0,5) {$\begin{tikzpicture}[scale = 0.85]
\node (60) at (0,1) {\small{$1$}};
\node (61) at (0.5,0) {\small{$2$}};
\node (62) at (-0.5,0) {\small{$3$}};
\draw[->] (60) -- (61);
\draw[->] (61) -- (62);
\draw[->] (62) -- (60);
\end{tikzpicture} $};
\draw (0,4.95) circle (0.58cm);

\draw (7) to node[xshift = -5pt, yshift = -5pt, scale =0.9]{$\mu_3$} (1);
\draw (10) to node[xshift = -5pt, yshift = -5pt, scale =0.9]{$\mu_1$} (7);
\draw (10) to node[xshift = 7pt, scale =0.9]{$\mu_2$} (8);
\draw (10) to node[xshift = 5pt, yshift = -5pt, scale =0.9]{$\mu_3$} (9);
\draw (7) to node[xshift = 5pt, yshift = -5pt, scale =0.9]{$\mu_2$} (2);
\draw[blue] (8) to node[xshift = -5pt, yshift = -5pt, scale =0.9]{$\mu_3$} (3);
\draw[blue] (8) to node[xshift = 5pt, yshift = -5pt, scale =0.9]{$\mu_1$} (4);
\draw (9) to node[xshift = -5pt, yshift = -5pt, scale =0.9]{$\mu_2$} (5);
\draw (9) to node[xshift = 5pt, yshift = -5pt, scale =0.9]{$\mu_1$} (6);
\draw (1) to node[yshift = 5 pt, scale =0.9]{$\mu_1$} (2);
\draw[blue] (3) to node[yshift = 5 pt, scale =0.9]{$\mu_2$} (4);
\draw (5) to node[yshift = 5 pt, scale =0.9]{$\mu_3$} (6);
\draw (1) to node[xshift = -5pt, yshift = 5pt, scale =0.9]{$\mu_2$} (11);
\draw (2) to node[xshift = 5pt, yshift = 5pt, scale =0.9]{$\mu_3$} (11);
\draw[blue] (3) to node[xshift = -5pt, yshift = 5pt, scale =0.9]{$\mu_1$} (12);
\draw[blue] (4) to node[xshift = 5pt, yshift = 5pt, scale =0.9]{$\mu_3$} (12);
\draw (5) to node[xshift = -5pt, yshift = 5pt, scale = 0.9]{$\mu_1$} (13);
\draw (6) to node[xshift = 5pt, yshift = 5pt, scale = 0.9]{$\mu_2$} (13);
\draw (11) to node[xshift = -5pt, yshift = 5pt, scale = 0.9]{$\mu_1$} (14);
\draw (12) to node[xshift = 7pt, yshift = 5pt, scale = 0.9]{$\mu_2$} (14);
\draw (13) to node[xshift = 5pt, yshift = 5pt, scale = 0.9]{$\mu_3$} (14);
\end{tikzpicture}
\end{center}
\end{example}

\subsection{Properties of Toric Posets}

Throughout this paper, all of our quivers will be acyclic (no directed cycles). We will still specify the acyclic assumption for clarity throughout. Also, all of our quivers will be simple. By simple we mean that there are no parallel directed arcs; self-loops and anti-parallel directed arcs are already prevented due to the acyclic assumption.

For ordinary posets, we saw that the definition of the Hasse diagram and transitive closure  depended on chains in the poset. A similar story will be true for toric posets, except using \textit{toric chains}. We first define a \textit{toric directed path}.

\begin{definition}[\cite{toric}]  \rm
Elements $x_1,x_2,\ldots, x_{k-1}, x_k \in V$ form a {\it toric directed path} if $Q$ contains all of the following arcs:
\begin{center}
\begin{tikzpicture}[scale = 0.75]
\node(1) at (0,0) {$x_1$};
\node(2) at (1,1) {$x_2$};
\node(3) at (1,2) {$x_3$};
\node(4) at (1,3.25){$\vdots$};
\node(5) at (1,4.5){$x_{k-1}$};
\node(6) at (0,5.5){$x_k$};
\draw[line width = 0.25 mm, ->] (1) -- (2);
\draw[line width = 0.25 mm, ->] (2) -- (3);
\draw[line width = 0.25 mm, ->] (3) -- (4);
\draw[line width = 0.25 mm, ->] (4) -- (5);
\draw[line width = 0.25 mm, ->] (5) -- (6);
\draw[line width = 0.25 mm, ->] (1) -- (6);
\end{tikzpicture}
\end{center}
\end{definition}
Let $C$ be the set of vertices in a toric directed path.
The \textit{length} of the toric directed path is $|C|-1$. We note that an arc is a toric directed path of length $1$ and a vertex is a toric directed path of length $0$.

\begin{definition}[\cite{toric}] \label{toric chain definition}  \rm
A \textit{toric chain} is a subset $V' \subseteq V$ that is totally ordered for every poset induced by an acyclic quiver in $[Q]$. 
\end{definition}

Just as chains are closed under subsets in a poset, toric chains are closed under subsets in a toric poset.
In \cite[Prop 6.3]{toric}, Develin, Macauley, and Reiner show that a subset $V' \subseteq V$ is a toric chain if and only if the elements of $V'$ lie along a toric directed path. Moreover, for $Q' \in [Q]$, every toric directed path in $Q'$ is contained in at least one maximal toric directed path, and every toric chain of $[Q]$ is contained in at least one maximal toric chain. By maximal, we mean with respect to containment.

\begin{definition} \label{def: toric comparability} \rm Two elements $a,b$ of a toric poset $[Q]$ are \textit{torically comparable} if there exists a toric chain in $[Q]$ that $a,b$ lie on together. If there is no toric chain that $a,b$ lie on together, we say that $a,b$ are \textit{torically incomparable}.
\end{definition}

Similarly to the ordinary poset setting, a toric poset may arise as a chamber in the complement of the corresponding toric graphic hyperplane arrangement for several graphs. Thus, there is also ambiguity for toric posets when identifying $[Q]$ with an acyclic quiver. However, once again we have two natural choices which we will define below:
\begin{enumerate}[i.]
\item $\overline{[Q]}$, the {\it toric transitive closure} of $[Q]$, and
\item $[Q]_{\rm{Hasse}}$, the {\it toric Hasse diagram} corresponding to $[Q]$.
\end{enumerate}

\begin{definition}[\cite{toric}] \rm
Let $[Q]$ be a toric poset. To define the toric transitive closure of $[Q]$, we must first choose a representative $Q'\ \in [Q]$. The \textit{toric transitive closure} of $Q'$, denoted $\overline{Q'}$ is the quiver where one adds to the underlying graph of $Q'$ all edges $\{i,j\}$ if $i$ and $j$ live on a toric chain and directs $i \rightarrow j$ if there exists a toric directed path from $i$ to $j$ in $Q'$. Then, the \textit{toric transitive closure} of $[Q]$, denoted $\overline{[Q]}$ is defined as $\overline{[Q]}:= [\overline{Q'}]$.

In \cite[Cor. 7.3]{toric}, the authors show that the toric transitive closure $\overline{[Q]}$ does not depend on the choice of representative $Q' \in [Q]$.

In contrast to the toric transitive closure, we can define the toric analogue of a Hasse diagram as follows. Let $[Q]$ be a toric poset and choose a representative $Q' \in [Q]$. Let $Q'_{\rm{Hasse}}$ be the quiver constructed from $Q'$ by removing each edge $i\to j$ for which $Q'$ contains a toric directed path from $i$ to $j$ that is both non-maximal and has length strictly greater than one. The \emph{toric Hasse diagram} of $[Q]$, denoted $[Q]_{\rm{Hasse}}$, is defined as $[Q]_{\rm{Hasse}}:=[Q'_{\rm{Hasse}}]$.

In \cite[Cor. 9.2]{toric}, the authors show that the toric Hasse diagram does not depend on the choice of representative $Q' \in [Q]$.
Below we show one representative of $[Q]_{\rm{Hasse}}, [Q]$, and $\overline{[Q]}$. We label these quivers $Q_1,Q_2,Q_3$, respectively.
\begin{center}
\begin{tabular}{cccc}
\begin{tikzpicture}[scale = 0.8]
\node(1) at (0,0) {$1$};
\node(2) at (0,1) {$2$};
\node(3) at (-1,2) {$3$};
\node(4) at (1,2) {$4$};
\node(5) at (2,3) {$5$};
\node(6) at (0,-1) {$Q_1$};
\draw[line width = 0.25 mm, ->] (1) -- (2);
\draw[line width = 0.25 mm, ->] (2) -- (3);
\draw[line width = 0.25 mm, ->] (2) -- (4);
\draw[line width = 0.25 mm, ->] (4) -- (5);
\draw[line width = 0.25 mm, ->] (1) to [out = 15, in = 270, looseness = 0.8] (5);
\end{tikzpicture} & \begin{tikzpicture}[scale = 0.8]
\node(1) at (0,0) {$1$};
\node(2) at (0,1) {$2$};
\node(3) at (-1,2) {$3$};
\node(4) at (1,2) {$4$};
\node(5) at (2,3) {$5$};
\node(6) at (0,-1) {$Q_2$};
\draw[line width = 0.25 mm, ->] (1) -- (2);
\draw[line width = 0.25 mm, ->] (2) -- (3);
\draw[line width = 0.25 mm, ->] (2) -- (4);
\draw[line width = 0.25 mm, ->] (4) -- (5);
\draw[line width = 0.25 mm, ->] (1) to [out = 15, in = 270, looseness = 0.8] (5);
\draw[line width = 0.25 mm, ->] (2) to [out = 0, in = 240, looseness = 0.8] (5);
\end{tikzpicture} &
\begin{tikzpicture}[scale = 0.8]
\node(1) at (0,0) {$1$};
\node(2) at (0,1) {$2$};
\node(3) at (-1,2) {$3$};
\node(4) at (1,2) {$4$};
\node(5) at (2,3) {$5$};
\node(6) at (0,-1) {$Q_3$};
\draw[line width = 0.25 mm, ->] (1) -- (2);
\draw[line width = 0.25 mm, ->] (2) -- (3);
\draw[line width = 0.25 mm, ->] (2) -- (4);
\draw[line width = 0.25 mm, ->] (4) -- (5);
\draw[line width = 0.25 mm, ->] (1) -- (4);
\draw[line width = 0.25 mm, ->] (1) to [out = 15, in = 270, looseness = 0.8] (5);
\draw[line width = 0.25 mm, ->] (2) to [out = 0, in = 240, looseness = 0.8] (5);
\end{tikzpicture}
\end{tabular}
\end{center}
\end{definition}

We saw that Greene's rational function $\Psi^P(\mathbf{x})$
is defined as a sum of rational functions indexed by the set $\mathcal{L}(P)$ of linear extensions of a poset $P$. The toric analogue of Greene's rational function is defined as a sum of rational functions indexed by the set of \textit{toric total extensions} of a toric poset $[Q]$.

A \textit{toric total order} corresponds to a chamber in the complement of the toric complete graphic arrangement $\mathcal{A}_{tor}(K_V)$. A toric total order is of the form \begin{align*}
[w] := [(w_1,w_2, \ldots, w_n)]=\left\{ \right. &(w_1,w_2, \ldots, w_{n-1},w_n),\\
&(w_2,w_3,\ldots, w_n,w_1), \\
&\qquad \vdots \\
&\left. (w_n,w_1, w_2, \ldots, w_{n-1}) \right\}
\end{align*}
and we emphasize that a toric total order is a cyclic equivalence class.

\begin{definition}[\cite{toric}] \label{def: toric total extension} \rm
Let $[Q]$ be a toric poset and let $c$ be the toric chamber in the associated toric graphic hyperplane arrangement that corresponds to $[Q]$. A toric total order $[w]$ is a {\it toric total extension} of $[Q]$ if $c_{[w]} \subseteq c$. We denote the set of toric total extensions of $[Q]$ as $\torext$.
\end{definition}
Due to the following lemma, the set of toric total extensions of $[Q]_{\rm{Hasse}}$ is equal to the set of toric total extensions of $\overline{[Q]}$. Sometimes, it is more convenient to work in the toric transitive closure rather than the toric Hasse diagram and vice versa; E.g. the proof of Theorem \ref{thm:unified-recursion-steps} part (iii) uses the toric transitive closure.

\begin{lemma}
\label{lem: L-tor-independent-of-graph}
Let $[Q]$ be a toric poset. Then, $\mathcal{L}_{\rm{tor}}([Q]_{\rm{Hasse}}) = \torext=\mathcal{L}_{\rm{tor}}(\overline{[Q]})$.
\end{lemma}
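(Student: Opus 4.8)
The plan is to show both equalities by proving the stronger fact that all three quivers---$[Q]$, $[Q]_{\rm Hasse}$, and $\overline{[Q]}$---determine the same toric chamber in their respective toric complete graphic arrangements, or more precisely that the toric chamber $c$ attached to $[Q]$ depends only on the toric comparability data (which pairs are torically comparable and in which cyclic sense), and this data is identical for all three. Concretely, a toric total order $[w]$ lies in $\mathcal{L}_{\rm tor}([Q])$ iff $c_{[w]}\subseteq c$, and the containment $c_{[w]}\subseteq c$ can be detected arc-by-arc: $c_{[w]}$ is cut out by the toric hyperplane conditions coming from the complete graph $K_V$, while $c$ is cut out only by those coming from the edges of whichever representative graph $G$ we use. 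So what we really need is that, for a fixed underlying graph $G$ and a fixed toric chamber, passing to a subgraph or supergraph of $G$ that still ``sees'' the same toric poset does not change which toric total orders refine it.

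First I would recall from \cite{toric} (the results quoted just above: \cite[Cor.\ 7.3]{toric} and \cite[Cor.\ 9.2]{toric}) that $\overline{[Q]}$ and $[Q]_{\rm Hasse}$ are well-defined, and that by \cite[Prop.\ 6.3]{toric} a subset is a toric chain iff its elements lie along a toric directed path in any (equivalently, some) representative. The key structural point is that the edges of $\overline{Q'}$ are exactly the torically comparable pairs of $[Q]$, oriented according to the toric directed paths, whereas $Q'_{\rm Hasse}$ is obtained from $Q'$ by deleting certain edges that are redundant in the sense that the corresponding comparability is already forced by a longer toric directed path. I would then argue that deleting such a redundant edge $i\to j$ from $Q'$ does not change the toric chamber: any point $[\mathbf{x}]$ satisfying all the hyperplane-avoidance conditions of $Q'_{\rm Hasse}$ automatically avoids $\mathcal H_{ij}^{\rm tor}$ as well, because the orientation of $\{i,j\}$ is determined (modulo $1$) by the chain of orientations along the toric directed path witnessing the redundancy --- this is essentially the toric transitivity worked out in \cite{toric}. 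Dually, adding a torically-forced edge (passing from $[Q]_{\rm Hasse}$ up to $\overline{[Q]}$) also leaves the chamber unchanged. Hence $[Q]_{\rm Hasse}$, $[Q]$, and $\overline{[Q]}$ all correspond to the same toric chamber $c$, and therefore $\mathcal{L}_{\rm tor}([Q]_{\rm Hasse})=\mathcal{L}_{\rm tor}([Q])=\mathcal{L}_{\rm tor}(\overline{[Q]})$ directly from Definition \ref{def: toric total extension}.

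An alternative, more combinatorial route that avoids the geometry: characterize $[w]\in\mathcal{L}_{\rm tor}([Q])$ purely as a condition that, for every arc $i\to j$ in a chosen representative, the cyclic order $[w]$ places $i$ before $j$ in the appropriate toric sense, i.e.\ the toric total order ``extends'' every arc of the quiver. One shows a toric total order extends every arc of $Q'$ iff it extends every arc of $\overline{Q'}$ (the extra arcs are forced by toric directed paths, and a toric total order is in particular a toric poset, hence closed under toric transitivity) iff it extends every arc of $Q'_{\rm Hasse}$ (deleting arcs only weakens the constraint, but the deleted arcs were redundant). I expect the main obstacle to be the bookkeeping around \emph{which} edges get deleted in forming $[Q]_{\rm Hasse}$ --- the definition removes an edge $i\to j$ only when there is a \emph{non-maximal} toric directed path of length $>1$ from $i$ to $j$, and one must check carefully that this non-maximality hypothesis is exactly what guarantees the removed comparability is still implied by the surviving arcs, so that no genuine constraint on toric total orders is lost. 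Handling that subtlety correctly (rather than hand-waving ``redundancy'') is the crux; everything else is a translation between the chamber picture and the extension picture.
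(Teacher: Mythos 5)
Your proposal is correct and takes essentially the same route as the paper: reduce everything to the fact that $[Q]_{\rm Hasse}$, $[Q]$, and $\overline{[Q]}$ all determine the same toric chamber in their respective toric graphic arrangements, and then the equality of toric total extension sets is immediate from Definition \ref{def: toric total extension}. The paper's actual proof is a one-line appeal to \cite[Cor. 7.3, Cor. 9.2]{toric} for exactly this chamber equality; you spend most of your effort unpacking why those cited results hold (the redundant-edge/toric-transitivity argument), but the overall structure is the same.
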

\begin{proof}
Due to \cite[Cor. 7.3, Cor. 9.2]{toric}, it follows that $[Q]_{\rm{Hasse}}$ and $\overline{[Q]}$ correspond to the same toric chamber $c$ as $[Q]$, arising in different toric graphic hyperplane arrangements.
\end{proof}

In Remark \ref{Rmk: literature review}, we saw that sink-source flips are an instance of quiver mutation at a sink or source vertex. Thus, we 
carry over the notation for mutation and let $\mu_k(Q)$ denote the resulting quiver after flipping source (or sink) $k$ in $Q$ and we also let $[Q]_v$ denote the set of quivers in $[Q]$ where $v$ is a source.

\begin{prop}
\label{prop:[Q]_v-nonempty}
    For any vertex $v$ in any acyclic quiver $Q$, there exist an acyclic quiver $Q'$ in $[Q]$ having $v$ as a source. That is, $[Q]_v$ is never empty.
\end{prop}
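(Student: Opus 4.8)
The plan is to induct on the number of vertices in $Q$ that must be ``pushed past'' $v$, or more precisely to induct on a suitable potential function measuring how far $v$ is from being a source. Fix an acyclic quiver $Q$ with a chosen vertex $v$. Since $Q$ is acyclic, it induces a genuine poset structure; let $U$ denote the set of vertices $u$ with a directed path $u \to v$ in $Q$, i.e. the vertices strictly below $v$ in this poset (together with $v$ itself if convenient). If $U = \{v\}$ then $v$ is already a source and we are done with $Q' = Q$. Otherwise $U \setminus \{v\}$ is nonempty, and I would choose a vertex $s$ that is \emph{minimal} in the induced subposet on $U \setminus \{v\}$; such a minimal element is a source \emph{of the whole quiver $Q$}, because any in-edge $t \to s$ would force $t \in U$ (as $t \to s \to \cdots \to v$) contradicting minimality of $s$. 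So $s$ is a legal vertex at which to perform a source-to-sink flip.

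The key step is then to verify that flipping at such an $s$ strictly decreases the potential. After the flip $\mu_s(Q)$, every former out-edge $s \to w$ becomes $w \to s$; in particular $s$ is now a sink along all its incident edges, so there is no longer any directed path from $s$ to $v$ (a path would have to leave $s$, impossible). One should check that no \emph{new} vertex is created below $v$: a vertex $u$ that had no directed path to $v$ in $Q$ still has none in $\mu_s(Q)$, since the only edges whose orientation changed are those at $s$, and routing a new path to $v$ through $s$ is impossible because $s$ has no out-edges in $\mu_s(Q)$, while $s \in U$ was already below $v$. Hence the new ``below $v$'' set is contained in $U \setminus \{s\}$, so the potential $|U| - 1$ strictly decreases. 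Iterating, after finitely many flips we reach a quiver $Q'$ in which the set below $v$ is just $\{v\}$, i.e. $v$ is a source; and $Q' \equiv Q$ since each step was a source-sink flip, so $Q' \in [Q]_v$.

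The main obstacle — really the only subtle point — is the claim that a minimal element of the ``below $v$'' poset is genuinely a source of $Q$ and that flipping it does not inadvertently enlarge the set below $v$; both reduce to the observation that an acyclic quiver's in-neighbors of a vertex $w$ below $v$ are themselves below $v$, and that after the flip $s$ becomes a strict sink so cannot lie on any path into $v$. Everything else is bookkeeping: termination is immediate from the strictly decreasing finite potential, and acyclicity is preserved under source-sink flips (a standard fact, also implicit in the setup of \cite{toric}, since these flips correspond to crossing toric hyperplanes between chambers). I would also remark that this argument is constructive and in fact mirrors the recursion used later in the paper, so it can be cited when building the algorithm in Section~\ref{sec: algorithm}.
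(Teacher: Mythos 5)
Your argument is correct, but it takes a genuinely different route from the paper's. The paper's proof simply picks a linear extension $(a_1,\ldots,a_n)$ of some representative $Q_1\in[Q]$ with $v=a_i$, and performs sink-to-source flips at $a_n,a_{n-1},\ldots,a_i$ in turn; this cyclically rotates the linear extension until $v$ sits first and is therefore a source, requiring essentially no bookkeeping beyond the observation that the last element of a linear extension is a sink. Your approach instead flips in the opposite direction (source-to-sink), at carefully chosen sources, and tracks a potential function: the size of the down-set $U$ of $v$. The two key claims — that a minimal element $s$ of $U\setminus\{v\}$ is a source of the whole quiver $Q$, and that flipping it removes $s$ from $U$ while admitting no newcomers — are both verified correctly, and the strictly decreasing finite potential guarantees termination. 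Both proofs are constructive; the paper's is shorter and gives an explicit flip sequence of length $n-i+1$ determined by the linear extension, while yours isolates a cleaner invariant (the down-set of $v$) and makes visible exactly which vertices obstruct $v$ from being a source, which is arguably more informative even if it requires more work to set up. One small stylistic note: your remark that the argument ``mirrors the recursion used later in the paper'' is a bit of a stretch — the later algorithm (Theorem~\ref{thm:unified-recursion-steps}) recurses by adding edges between torically incomparable pairs, which is a different mechanism from your potential-reducing flips — so I would drop that aside.
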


\begin{proof}
Let $Q_1 \in [Q]$ and let $(a_1,a_2,\ldots,a_n) \in \mathcal{L}(Q_1)$ where $v=a_i$ for some $a_i$. If $i=1$, vertex $v$ is necessarily a source. Otherwise, we may flip $a_n$, since $a_n$ is a sink in $Q_1$. In the resulting quiver, vertex $a_n$ is a source and vertex $a_{n-1}$ is a sink. We can keep flipping in this way until we reach a quiver where $a_i$ is a source.
\end{proof}

\begin{prop} \label{prop: Toric lin ext reformulations} \rm For a toric poset $[Q]$, the set of toric total extensions can be written in terms of ordinary linear extensions in the following ways:

\begin{enumerate}[i.]
\item \label{L-tor-reformulation-1} $\torext = \{[w]: w \in \mathcal{L}(Q') \text{ for some } Q' \in [Q]\}$

\item \label{L-tor-reformulation-2} $\torext= \displaystyle \bigsqcup_{
Q' \in [Q]_1}
\left\{[1 \hat{w}]: \hat{w} \in \mathcal{L}(Q'-\{1\}) \right\}$
\end{enumerate}
where $\bigsqcup$ denotes disjoint union.

\end{prop}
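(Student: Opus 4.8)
The plan is to prove the two reformulations by unwinding the geometric definition of toric total extension (Definition \ref{def: toric total extension}) together with the ordinary-poset picture from Section \ref{sec:posets and hyperplanes}, and then to organize the resulting description by which quiver-representative a given total extension "sees."

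For part (\ref{L-tor-reformulation-1}), the key observation is that taking the quotient $\pi\colon\mathbb{R}^n\to\mathbb{R}^n/\mathbb{Z}^n$ commutes with passing to chambers in the following sense: a toric total order $[w]$ lies in $\torext$ exactly when the toric chamber $c_{[w]}=\pi(c_w)$ is contained in the toric chamber $c=\pi(c(P))$ corresponding to $[Q]$, and this happens exactly when \emph{some} lift $c_{w'}$ (for $w'$ a rotation of $w$, hence $[w']=[w]$) is contained in some chamber $c(P')$ with $P'$ a poset induced by a quiver $Q'\in[Q]$. Concretely: choosing a point $\mathbf{x}$ in $c_{[w]}$, its image in $\mathbb{R}^n/\mathbb{Z}^n$ determines, coordinatewise mod $1$, a linear order on $[n]$ (a rotation of $w$) and simultaneously an acyclic quiver $Q'$ on the complete graph $K_{[n]}$ restricted to the edges of $G$; the condition $c_{[w]}\subseteq c$ forces $Q'\in[Q]$, and then that rotation of $w$ lies in $\mathcal{L}(Q')$. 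Conversely if $w\in\mathcal{L}(Q')$ for some $Q'\in[Q]$ then $\overline{c}_w\subseteq\overline{c(P')}$ by Equation \eqref{cones of braid arrangement} applied to $P'$, and pushing forward under $\pi$ gives $c_{[w]}\subseteq c$, so $[w]\in\torext$. I would cite Lemma \ref{lem: L-tor-independent-of-graph} / \cite[Cor.~7.3, Cor.~9.2]{toric} to handle the fact that $[Q]$, $[Q]_{\mathrm{Hasse}}$, $\overline{[Q]}$ all give the same chamber $c$, so the statement does not depend on which graph $G$ we use.

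For part (\ref{L-tor-reformulation-2}), I would start from (\ref{L-tor-reformulation-1}) and use the rotation-invariance of a toric total order: every class $[w]\in\torext$ has a unique representative of the form $(1,\hat w)$ with $\hat w$ a permutation of $\{2,\dots,n\}$. By Proposition \ref{prop:[Q]_v-nonempty}, $[Q]_1\neq\varnothing$, and by (\ref{L-tor-reformulation-1}) the representative $(1,\hat w)$ is an ordinary linear extension of some $Q'\in[Q]$; since its first element is $1$, that linear extension begins with $1$, which forces $1$ to be a source of $Q'$, i.e.\ $Q'\in[Q]_1$, and then $\hat w\in\mathcal{L}(Q'-\{1\})$. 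This shows $\torext\subseteq\bigcup_{Q'\in[Q]_1}\{[1\hat w]:\hat w\in\mathcal{L}(Q'-\{1\})\}$; the reverse inclusion is immediate from (\ref{L-tor-reformulation-1}) since $1\hat w\in\mathcal{L}(Q')$ whenever $1$ is a source of $Q'$ and $\hat w\in\mathcal{L}(Q'-\{1\})$. It remains to check disjointness: if $[1\hat w]=[1\hat w']$ as toric total orders then $\hat w=\hat w'$ by uniqueness of the representative starting with $1$, so two summands can only overlap in identical elements; but we must argue the union is genuinely disjoint \emph{as indexed by $Q'\in[Q]_1$} — here the point is that a given $\hat w\in\mathcal{L}(Q'-\{1\})$ reconstructs $Q'$ uniquely, because knowing that $1$ is a source and knowing the induced order on $\{2,\dots,n\}$ pins down the acyclic orientation of every edge of $G$, hence the quiver $Q'$ itself. (If one instead reads $\bigsqcup$ as asserting the sets are pairwise disjoint, one invokes Theorem \ref{thm: Source-sink equiv with fixed source} / uniqueness of the starting-with-$1$ representative to the same effect; I would state explicitly which reading is intended.)

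The main obstacle I anticipate is the disjointness bookkeeping in (\ref{L-tor-reformulation-2}): one must be careful about whether distinct $Q'\in[Q]_1$ can yield the same toric total order, and the clean way to rule this out is the reconstruction remark above — a source vertex $1$ plus a linear order on the remaining vertices determines the orientation of \emph{every} edge incident to $1$ (all pointing out of $1$) and of every other edge of $G$ (by the induced order), so $\hat w$ together with the datum "$Q'\in[Q]_1$" is really just $\hat w$, and the map $Q'\mapsto$ (its set of linear extensions starting with $1$) has pairwise disjoint images. Everything else is a direct translation between the cone description of chambers, Equation \eqref{cones of braid arrangement}, and the flip-equivalence definition of $[Q]$, so I would keep those parts brief.
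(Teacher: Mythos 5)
Your argument is essentially the paper's: part (i) by lifting a point of $c_{[w]}\subseteq c_{[Q]}$ to $\mathbb{R}^n$, observing that the lift lies in $c_{w'}\cap c_{Q'}$ for a rotation $w'$ and some $Q'\in[Q]$, then using Equation~\eqref{cones of braid arrangement} and the flip operation for the two inclusions; part (ii) by choosing the unique representative of each $[w]$ starting with $1$, noting that this forces $1$ to be a source, and proving disjointness by reconstructing $Q'$ from $\hat w$ together with the fixed underlying graph $G$. One small caution: the parenthetical $c_{[w]}=\pi(c_w)$ is not literally correct (a single chamber $c_w$ in $\mathbb{R}^n$ can project onto several toric chambers, e.g.\ $(0.1,0.5,2.3)\in c_{(1,2,3)}$ maps to a point with cyclic order $[(1,3,2)]$), but this slip is not load-bearing since your actual argument proceeds pointwise by lifting, exactly as in the paper.
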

\begin{proof}
\textbf{Assertion (i)}. We start by showing  \[\torext \subseteq \{[w]: w \in \mathcal{L}(Q') \text{ for some } Q' \in [Q]\}.\] Let $[\alpha] \in \torext$. Then by Definition \ref{def: toric total extension}, we have $c_{[\alpha]} \subseteq c_{[Q]}$. Let $\mathbf{x}=(x_1,x_2,\ldots,x_n)$ be a point in $c_{[\alpha]}$ and let $\mathbf{\tilde{x}} \in \mathbb{R}^n - \mathcal{A}(G)$ such that $\pi(\mathbf{\tilde{x}})=\mathbf{x}$. By \cite[Thm. 1.4]{toric}, $\mathbf{\tilde{x}} \in c_{w'}$ for some $w' \in [\alpha]$. Since $c_{[\alpha]} \subseteq c_{[Q]}$, we similarly have $\mathbf{\tilde{x}} \in c_{Q'}$ for some $Q' \in [Q]$. Thus, we have that $\mathbf{\tilde{x}} \in c_{Q'} \cap c_{w'}$. Using Equation \eqref{cones of braid arrangement}, it must be the case that $c_{w'} \subseteq c_{Q'}$, implying $w' \in \mathcal{L}(Q')$.

Now, we show \[\{[w]: w \in \mathcal{L}(Q') \text{ for some } Q' \in [Q]\} \subseteq \torext.\]
Consider $[\alpha]$ such that $\alpha \in \mathcal{L}(Q')$ for some $Q' \in [Q]$. If $\alpha = (a_1,a_2,\ldots, a_n)$, then $a_n$ is a sink in $Q'$, so we may flip at $a_n$. We note that $(a_n,a_1, \ldots, a_{n-1}) \in \mathcal{L}(\mu_{a_n}(Q'))$. We can keep flipping in this way until we have visited every element of $[\alpha]$. Then, by Equation \eqref{cones of braid arrangement}, we have that for all $w' \in [\alpha]$, $c_{w'} \subseteq c_{Q''}$ for some $Q'' \in [Q]$. From Theorem \cite[Thm. 1.4]{toric}, we have $\pi(c_{w'}) \subseteq c_{[\alpha]}$ for all $w' \in [\alpha]$ and $\pi(c_{Q''}) \subseteq c_{[Q]}$ for all $Q'' \in [Q]$. Since every point in $c_{[\alpha]}$ is the projection of a point in $c_{w'}$ for some $w' \in [\alpha]$, we have that $c_{[\alpha]} \subseteq c_{[Q]}$, and $[\alpha] \in \torext$. \\

\textbf{Assertion (ii)} We now show assertion (ii) is equivalent to assertion (i). If $[w] \in \torext$, then for all $w' \in [w]$, we have that $w' \in \mathcal{L}(Q')$ for some $Q' \in [Q]$. Moreover, there exists a unique $w' \in [w]$ such that $w'_1=1$. Thus, (i) is equivalent to
\[\bigcup_{Q' \in [Q]} \{[w]: w \in \mathcal{L}(Q'), w_1=1\},\]
which we can rewrite as 
\[\bigcup_{Q' \in [Q]} \{[1\hat{w}]: \hat{w} \in \mathcal{L}(Q'-\{1\})\}.\]
We show that this union is in fact disjoint. In other words, we show that if 
\[\{[1\hat{w}]:\hat{w} \in \mathcal{L}(Q'-\{1\})\} ~ \bigcap ~ \{[1\hat{w}]:\hat{w} \in \mathcal{L}(Q''-\{1\})\} \neq \emptyset\]
for $Q',Q'' \in [Q]$, then $Q'=Q''$. Consider $[1\hat{w}]$ such that $\hat{w} =(w_2,\ldots,w_n)  \in \mathcal{L}(Q'-\{1\}) \bigcap \mathcal{L}(Q''-\{1\})$. For all $\{i,j\} \in G - \{1\}$, one has 
\[i \rightarrow j \text{ in } Q'-\{1\} \iff \hat{w}^{-1}(i) < \hat{w}^{-1}(j)  \iff i \rightarrow j \text{ in } Q''-\{1\}.\] We also note that since quivers $Q',Q''$ have $1$ as a source, any edges incident to $1$ will be directed away from $1$. Thus, $Q' = Q''$. Therefore, we have shown that

\[\torext= \displaystyle \bigsqcup_{Q' \in [Q]_1}
\left\{[1 \hat{w}]: \hat{w} \in \mathcal{L}(Q'-\{1\}) \right\}.\qedhere \] 
\end{proof}

\begin{remark} \rm
Let $[Q]$ be a toric poset where $1$ is an isolated vertex in the underlying graph of $[Q]$. Then, $1$ is both a source and a sink, so when finding the set of toric total extensions of $[Q]$, we can still use part (ii) of Proposition \ref{prop: Toric lin ext reformulations}. 
\end{remark}
Before stating the next result, we recall that a \textit{bounded} poset $P$ is one that has a unique minimal element $\hat{0}$ and a unique maximal element $\hat{1}$.

\begin{prop}
\label{prop:Ltor-for-bounded-posets}
Let $P$ be a bounded poset, and let $Q$ be the quiver resulting from adding the directed edge $\hat{0} \rightarrow \hat{1}$ to the Hasse diagram $H(P)$. Then
one has a bijection 
$$
 \begin{array}{rcl}
 \theta: \mathcal{L}(P) &
\longrightarrow &\torext\\ 
(\hat{0},w_2,\ldots,w_{n-1},\hat{1}) &\longmapsto &[(\hat{0},w_2,\ldots,w_{n-1},\hat{1})].
\end{array}
$$
\end{prop}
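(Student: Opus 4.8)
The plan is to exhibit $\theta$ as a well-defined bijection by checking that it lands in $\torext$, is injective, and is surjective, all three following quickly from Proposition~\ref{prop: Toric lin ext reformulations} once we understand the quiver $Q$ and its source-sink class $[Q]$. First I would observe that since $P$ is bounded with $\hat 0$ minimal and $\hat 1$ maximal, every linear extension of $P$ has the form $(\hat 0, w_2,\ldots,w_{n-1},\hat 1)$, so $\hat 0$ is the unique source and $\hat 1$ the unique sink of $H(P)$; adding the edge $\hat 0 \to \hat 1$ keeps $Q$ acyclic and keeps $\hat 0$ a source, so $Q \in [Q]_{\hat 0}$. For well-definedness, given $w = (\hat 0, w_2,\ldots,w_{n-1},\hat 1) \in \mathcal L(P)$, note $w$ is also a linear extension of $Q$ (the extra edge $\hat 0 \to \hat 1$ is respected since $\hat 0$ is first and $\hat 1$ is last), hence $[w] \in \torext$ by Proposition~\ref{prop: Toric lin ext reformulations}(i). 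So $\theta$ is a well-defined map into $\torext$.

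Next I would handle injectivity: if $[(\hat 0, w_2,\ldots,w_{n-1},\hat 1)] = [(\hat 0, w'_2,\ldots,w'_{n-1},\hat 1)]$ as toric total orders, then the two representatives are cyclic rotations of one another; but a cyclic rotation that fixes the position of $\hat 0$ (namely, being the first entry) must be the identity rotation, so the tuples agree and the underlying linear extensions coincide. (One should note $\hat 0$ really does occur in the first slot of the listed representative, so there is a unique representative of each class with $\hat 0$ first, which is exactly the one in the image of $\theta$.) This is the easy direction.

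For surjectivity — which I expect to be the main point — I would use part (ii) of Proposition~\ref{prop: Toric lin ext reformulations}, relabeling so that $\hat 0$ plays the role of the distinguished vertex $1$:
\[
\torext \;=\; \bigsqcup_{Q' \in [Q]_{\hat 0}} \bigl\{\, [\hat 0\, \hat w] : \hat w \in \mathcal L(Q' - \{\hat 0\}) \,\bigr\}.
\]
The crux is then to show $[Q]_{\hat 0} = \{Q\}$, i.e. $Q$ is the \emph{only} representative of its source-sink class in which $\hat 0$ is a source. Indeed, $\hat 0$ is a source in $Q$, and $\hat 1$ is a sink in $Q$; any flip that keeps $\hat 0$ a source cannot flip $\hat 0$ itself, and by Theorem~\ref{thm: Source-sink equiv with fixed source} we may connect any two members of $[Q]_{\hat 0}$ by a flip sequence that never flips $\hat 0$ nor any neighbor of $\hat 0$ — but in $Q$, with the added edge $\hat 0 \to \hat 1$, the vertex $\hat 1$ is a neighbor of $\hat 0$, so $\hat 1$ is never flipped either; inductively every vertex that becomes a candidate sink to flip is blocked, and one argues no flip is ever available, so $[Q]_{\hat 0} = \{Q\}$. (Alternatively and more directly: since $\hat 0 \to \hat 1$ is an edge and $\hat 0$ is a source, $\hat 1$ has in-degree at least one and cannot be a source; any sink flip must be at a sink, the only sink of $Q$ is $\hat 1$; flipping $\hat 1$ makes it a source and destroys the property that $\hat 0 \to \hat 1$ — no, rather one checks directly that after any such flip $\hat 0$ is no longer the unique source or $\hat 1$ becomes problematic; the cleanest route is Theorem~\ref{thm: Source-sink equiv with fixed source}.) Granting $[Q]_{\hat 0} = \{Q\}$, the disjoint union collapses to a single term, giving $\torext = \{[\hat 0\, \hat w] : \hat w \in \mathcal L(Q - \{\hat 0\})\}$; and $\mathcal L(Q-\{\hat 0\}) = \mathcal L(P - \{\hat 0\})$ since deleting the source $\hat 0$ removes the only edge ($\hat 0 \to \hat 1$) by which $Q$ differs from $H(P)$, and finally $\{(\hat 0, \hat w) : \hat w \in \mathcal L(P-\{\hat 0\})\} = \mathcal L(P)$ because $\hat 0$ is the unique minimum of $P$. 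This shows every element of $\torext$ is $\theta(w)$ for a unique $w \in \mathcal L(P)$, completing the proof. The one genuine obstacle is the claim $[Q]_{\hat 0} = \{Q\}$; everything else is bookkeeping about where $\hat 0$ and $\hat 1$ sit.
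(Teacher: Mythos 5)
Your proof is correct and follows the same two-part structure as the paper's: injectivity via uniqueness of the representative of each cyclic class that begins with $\hat 0$, and surjectivity via Proposition~\ref{prop: Toric lin ext reformulations}(ii) reduced to the claim $[Q]_{\hat 0}=\{Q\}$, after which the bookkeeping identifying $\mathcal L(Q-\{\hat 0\})$ with $\mathcal L(P-\{\hat 0\})$ and hence with $\mathcal L(P)$ is the same. The genuine divergence is in how $[Q]_{\hat 0}=\{Q\}$ is established. The paper argues directly on $Q$ itself: since $P$ is bounded, $\hat 0$ is the unique source and $\hat 1$ the unique sink of $Q$, so the only vertices one could flip are $\hat 0$ and $\hat 1$, and each flip removes $\hat 0$ from being a source. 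You instead invoke Theorem~\ref{thm: Source-sink equiv with fixed source} to get a flip path from $Q$ to any $Q''\in[Q]_{\hat 0}$ that avoids flipping $\hat 0$ and its neighbors (including $\hat 1$, because of the added edge), and then observe that no first flip is available from $Q$, so the path is trivial and $Q''=Q$.

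Two remarks on this route. First, Theorem~\ref{thm: Source-sink equiv with fixed source} is proved in Section~\ref{Sec: source-sink equivalence}, several sections after Proposition~\ref{prop:Ltor-for-bounded-posets}; using it here is a forward reference. It is not circular --- the proof of Theorem~\ref{thm: Source-sink equiv with fixed source} does not depend on this proposition --- but the paper's order of presentation evidently intends this proposition to be self-contained, and indeed the theorem is considerably heavier machinery than is strictly needed here. Second, there is a real payoff to your more explicit route: the paper's quick argument only shows that $Q$ admits no single flip that stays in $[Q]_{\hat 0}$; to rule out a longer flip sequence that leaves $[Q]_{\hat 0}$ and returns, one needs exactly a connectivity statement of the kind Theorem~\ref{thm: Source-sink equiv with fixed source} provides (or, alternatively, a flow-difference argument via Pretzel's theorem~\ref{thm:Pretzel}). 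So your invocation, while heavier, makes the step airtight. One stylistic note: your ``inductively every vertex that becomes a candidate sink to flip is blocked'' is unnecessary --- once you note that $Q$'s only source and sink are $\hat 0$ and $\hat 1$, both forbidden, there is no first flip, so the sequence from Theorem~\ref{thm: Source-sink equiv with fixed source} has length zero; no induction is needed --- and the hedged ``Alternatively and more directly \ldots'' passage should be cleaned up or removed, since the half-sketched alternative does not quite cohere.
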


\begin{proof}
We first show that this map is injective. Let $\mathbf{a}=(\hat{0},a_2, \ldots, a_{n-1},\hat{1})$ and $\mathbf{b} = (\hat{0}, b_2, \ldots, b_{n-1},\hat{1})$ be two linear extensions of $P$, and suppose $\theta(\mathbf{a})=\theta(\mathbf{b})$. Then, $[(\hat{0},a_2,\ldots, a_{n-1}, \hat{1})]=[(\hat{0},b_2,\ldots, b_{n-1}, \hat{1})]$. Since these two cyclic equivalence classes are equal, the representatives where $\hat{0}$ comes first are also equal.

We now show that $\theta$ is surjective. First, we consider $\torext$. Using Proposition \ref{prop: Toric lin ext reformulations} part (ii), we consider all quivers in $[Q]$ where $\hat{0}$ is a source and let $Q'$ be such a quiver. Since $Q'$ contains the directed edge $\hat{0} \rightarrow \hat{1}$, we cannot flip $\hat{1}$ or else $\hat{0}$ would no longer be a source. We note that besides $\hat{0}$ and $\hat{1}$ there are no other sinks nor sources in $Q'$ to consider flipping. Thus, $Q'$ is the only quiver in $[Q]$ with $\hat{0}$ a source, so an arbitrary toric total extension of $[Q]$ is of the form $[(\hat{0}, w_2, \ldots, w_{n-1}, \hat{1})]$. It is clear that $(\hat{0},w_2,\ldots, w_{n-1},\hat{1})$ is a linear extension of $P$ and thus we can conclude that $\Phi$ is surjective. 
\end{proof}

In 1991, Brightwell and Winkler showed that counting the number of linear extensions of an ordinary poset is a $\#P$-complete problem. The following result shows that counting the number of toric total extensions of a toric poset is also $\#P$-complete. Further discussion regarding $\#P$-completeness can be found in \cite{arora, brightwell, goldreich2008computational}.

\begin{theorem}    
\label{thm: sharp-P-complete}
Counting the toric total extensions for a toric poset $[Q]$ is $\#P$-complete.
\end{theorem}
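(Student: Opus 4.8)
The plan is to prove $\#P$-completeness in the standard two steps: (1) the counting function lies in $\#P$, and (2) it is $\#P$-hard, via a parsimonious polynomial-time reduction from counting linear extensions of an ordinary poset, which is $\#P$-complete by the theorem of Brightwell and Winkler \cite{brightwell}.

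For step (1), I would represent each toric total extension $[w] \in \torext$ by the unique linear representative $w = (1, w_2, \ldots, w_n)$ having $w_1 = 1$. These representatives are pairwise distinct, have polynomial bit-size, and $|\torext|$ equals the number of permutations $w$ of $[n]$ with $w_1 = 1$ for which $[w] \in \torext$, so it suffices to decide this last condition in polynomial time. By part (i) of Proposition~\ref{prop: Toric lin ext reformulations}, $[w] \in \torext$ if and only if some representative of $[w]$ lies in $\mathcal{L}(Q')$ for some $Q' \in [Q]$; since the only orientation of the underlying graph $G$ that a given total order extends is the one orienting each edge $\{i,j\}$ in agreement with that order, and since cyclic rotations of $w$ induce source-sink flip-equivalent orientations of $G$, this condition becomes: the orientation $Q_w$ of $G$ obtained from $w$ is source-sink flip-equivalent to $Q$. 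This is a polynomial-time test, since two acyclic orientations of $G$ are source-sink flip-equivalent precisely when they induce equal circulations around every cycle of $G$, and it suffices to check this on a cycle basis, which has only $|E(G)| - |V(G)| + c$ cycles, where $c$ is the number of connected components of $G$, each checkable in linear time. Hence the counting function is in $\#P$.

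For step (2), given an arbitrary finite poset $P$, form the bounded poset $P^+$ by adjoining a new global minimum $\hat 0$ and global maximum $\hat 1$. Every linear extension of $P^+$ starts with $\hat 0$ and ends with $\hat 1$, so stripping these off is a bijection $\mathcal{L}(P^+) \to \mathcal{L}(P)$; in particular $|\mathcal{L}(P^+)| = |\mathcal{L}(P)|$. Let $Q$ be the acyclic quiver obtained from the Hasse diagram $H(P^+)$ by adding the arc $\hat 0 \to \hat 1$, exactly as in Proposition~\ref{prop:Ltor-for-bounded-posets}; this is computable from $P$ in polynomial time. That proposition supplies a bijection $\mathcal{L}(P^+) \to \torext$, so
\[
|\torext| \;=\; |\mathcal{L}(P^+)| \;=\; |\mathcal{L}(P)|.
\]
Thus $P \mapsto [Q]$ is a parsimonious polynomial-time reduction from counting linear extensions to counting toric total extensions, and since the former is $\#P$-hard \cite{brightwell}, so is the latter; together with step (1) this gives $\#P$-completeness.

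I expect the main obstacle to be step (1): putting the polynomial-time flip-equivalence test on firm footing, the circulation criterion being essentially the content of the classical work of Mosesian and Pretzel \cite{mosesian, pretzel} on reorienting graphs by pushing down maximal vertices. Step (2), by contrast, is almost immediate once Proposition~\ref{prop:Ltor-for-bounded-posets} is available, being just the transport of Brightwell--Winkler through that bijection after the harmless padding of $P$ to a bounded poset.
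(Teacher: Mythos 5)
Your $\#P$-hardness reduction is exactly the paper's: pad $P$ to a bounded poset $P^+$, add the arc $\hat 0 \to \hat 1$ to $H(P^+)$, and invoke Proposition~\ref{prop:Ltor-for-bounded-posets} together with the Brightwell--Winkler theorem; this is a parsimonious polynomial-time reduction, and your wording of it matches the paper's proof of Theorem~\ref{thm: sharp-P-complete} essentially verbatim. Where you go beyond the paper is the membership-in-$\#P$ half, which the paper takes for granted. Your verifier argument is sound: fixing the canonical representative with $w_1 = 1$ gives a bijection between toric total extensions and accepted certificates; Proposition~\ref{prop: Toric lin ext reformulations}(i) together with the observation that a cyclic rotation of $w$ corresponds to a single source-to-sink flip of the induced acyclic orientation $Q_w$ of $G$ shows that $[w]\in\torext$ iff $Q_w$ is flip-equivalent to $Q$; and that equivalence can be tested in polynomial time by the Mosesian--Pretzel flow-difference criterion (Theorem~\ref{thm:Pretzel}), since the flow-difference is a $\mathbb{Z}$-linear functional on the cycle space of $G$ and hence is determined by its values on a fundamental cycle basis of size $|E|-|V|+c$. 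So the approach is the same where the paper supplies an argument, and your addition fills the membership gap cleanly.
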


\begin{proof}
This result follows from the main result of Brightwell and Winkler \cite{brightwell}, since Proposition \ref{prop:Ltor-for-bounded-posets} shows that counting the elements of $\torext$ has counting ordinary linear extensions $\mathcal{L}(P)$ as a special case. In particular, given a poset $P$, create a bounded poset $\hat{P}$ by adding $\hat{0},\hat{1}$ to $P$. The poset $\hat{P}$ has the same number of linear extensions as $P$ and then Proposition \ref{prop:Ltor-for-bounded-posets} produces a toric poset $[Q]$ that has exactly that many toric total extensions. 
\end{proof}

Although part (ii) of Proposition \ref{prop: Toric lin ext reformulations} provides a more efficient process for finding the set of toric total extensions relative to Proposition \ref{prop: Toric lin ext reformulations} part (i), we look for more efficient ways to compute this set. In Section ~\ref{sec: algorithm}, we provide a recursive algorithm to more efficiently compute the set of toric total extensions of a toric poset. However, as mentioned in the Introduction, since Theorem \ref{thm: sharp-P-complete} shows counting toric total extensions is a $\# P$-complete problem, one should not expect a very efficient algorithm for finding $\torext$.

\section[Properties of Greene's Toric Function]{Properties of $\psitor$} 

In this section, we show various identities regarding $\psitor$. We start by showing how the \textit{Kleiss-Kuijf relations} are a specific instance of Greene's theorem for strongly planar posets (recall Equation $\eqref{stronglyplanar}$).

\begin{prop} \label{bounded poset}
 Suppose $P$ is a bounded poset with minimal element $\hat{0}$ and maximal element $\hat{1}$. Let $Q$ be the quiver resulting from adding the directed edge $\hat{0} \rightarrow \hat{1}$ to the Hasse diagram $H(P)$. 
 Then, for the toric poset $[Q]$, we have 
 $$
 \psitor= 
 \frac{1} 
 {x_{\hat{1}}-x_{\hat{0}}} \Psi^{P}(\mathbf{x}).
 $$

\end{prop}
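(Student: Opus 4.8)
The plan is to combine the bijection $\theta\colon \mathcal{L}(P) \to \torext$ of Proposition~\ref{prop:Ltor-for-bounded-posets} with a direct manipulation of the summands. By that proposition, every toric total extension of $[Q]$ has a canonical representative of the form $(\hat 0, w_2, \ldots, w_{n-1}, \hat 1)$ with $(\hat 0, w_2, \ldots, w_{n-1}, \hat 1) \in \mathcal{L}(P)$, and conversely. So I would begin by writing
\[
\psitor = \sum_{[w] \in \torext} \Psi_{\rm tor}^{[w]}(\mathbf{x}) = \sum_{(\hat 0, w_2, \ldots, w_{n-1}, \hat 1) \in \mathcal{L}(P)} \frac{1}{(x_{\hat 0} - x_{w_2})(x_{w_2}-x_{w_3}) \cdots (x_{w_{n-1}} - x_{\hat 1})(x_{\hat 1} - x_{\hat 0})},
\]
where I have evaluated $\Psi_{\rm tor}^{[w]}$ on the representative starting at $\hat 0$; this is legitimate since $\Psi_{\rm tor}^{[w]}(\mathbf{x})$ is manifestly invariant under cyclic rotation of $(w_1, \ldots, w_n)$ (each factor $x_{w_i} - x_{w_{i+1}}$, indices mod $n$, just gets permuted), so the value does not depend on the chosen representative.

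Next I would pull the single factor $x_{\hat 1} - x_{\hat 0}$, which is common to every summand, out of the sum:
\[
\psitor = \frac{1}{x_{\hat 1} - x_{\hat 0}} \sum_{(\hat 0, w_2, \ldots, w_{n-1}, \hat 1) \in \mathcal{L}(P)} \frac{1}{(x_{\hat 0} - x_{w_2})(x_{w_2} - x_{w_3}) \cdots (x_{w_{n-1}} - x_{\hat 1})}.
\]
The remaining sum is exactly $\Psi^P(\mathbf{x})$ as defined in Equation~\eqref{Greene's function}: since $P$ is bounded, \emph{every} linear extension $w = (w_1, \ldots, w_n)$ of $P$ has $w_1 = \hat 0$ and $w_n = \hat 1$, so $\mathcal{L}(P)$ is precisely the set of sequences $(\hat 0, w_2, \ldots, w_{n-1}, \hat 1)$ appearing in the sum, and the product $(x_{w_1} - x_{w_2}) \cdots (x_{w_{n-1}} - x_{w_n})$ in \eqref{Greene's function} specializes to $(x_{\hat 0} - x_{w_2}) \cdots (x_{w_{n-1}} - x_{\hat 1})$. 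Hence the sum equals $\Psi^P(\mathbf{x})$ and we obtain $\psitor = \frac{1}{x_{\hat 1} - x_{\hat 0}} \Psi^P(\mathbf{x})$.

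There is essentially no serious obstacle here; the result is a bookkeeping consequence of the bijection $\theta$ together with two observations that need to be made carefully: first, that $\Psi_{\rm tor}^{[w]}$ is well-defined on cyclic classes and may therefore be evaluated on any chosen representative (so choosing the $\hat 0$-first representative is harmless), and second, that boundedness of $P$ forces $\hat 0$ and $\hat 1$ to be the first and last elements of \emph{every} linear extension, which is what makes the leftover sum literally Greene's $\Psi^P(\mathbf{x})$ rather than some partial sum over it. The only mild care required is tracking the orientation/sign of the factor $x_{\hat 1} - x_{\hat 0}$ (as opposed to $x_{\hat 0} - x_{\hat 1}$) coming from the cyclic wrap-around term $x_{w_n} - x_{w_1}$ in Definition~\ref{def: toric Greene}, which is precisely why the stated identity has $\frac{1}{x_{\hat 1} - x_{\hat 0}}$ and not its negative.
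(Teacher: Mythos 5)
Your proposal is correct and follows exactly the paper's own argument: invoke the bijection $\theta$ of Proposition~\ref{prop:Ltor-for-bounded-posets}, evaluate each $\Psi_{\rm tor}^{[w]}$ on the $\hat 0$-first representative, and factor out the common term $\frac{1}{x_{\hat 1}-x_{\hat 0}}$ to recover $\Psi^P(\mathbf{x})$. The extra remarks you make (cyclic invariance of $\Psi_{\rm tor}^{[w]}$ and that boundedness forces every linear extension to start at $\hat 0$ and end at $\hat 1$) are left implicit in the paper but are the right points to be careful about.
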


\begin{proof}
Using Definition \ref{def: toric Greene} and the bijection $\theta:  \mathcal{L}(P) \rightarrow \torext$
from Proposition \ref{prop:Ltor-for-bounded-posets}, one has 
\begin{align*}
\psitor
&= 
\sum_{[u] \in \torext} \Psi^{[u]}_{\rm{tor}}(\mathbf{x})\\
&=  \sum_{w \in \mathcal{L}(P)} 
\frac{1}{
(x_{\hat{0}}-x_{w_2})
(x_{w_2}-x_{w_3})
\cdots 
(x_{w_{n-2}}-x_{w_{n-1}})
(x_{w_{n-1}}-x_{\hat{1}}) 
(x_{\hat{1}}-x_{\hat{0}})}\\
&= \sum_{w \in \mathcal{L}(P)} 
\Psi^{w}(\mathbf{x})
\frac{1}{(x_{\hat{1}}-x_{\hat{0}})}
= \frac{1}{(x_{\hat{1}}-x_{\hat{0}})}
\Psi^P(\mathbf{x}).\qedhere
\end{align*}
\end{proof}

\begin{cor} \label{Bounded, planar} Let $P$ be a bounded, strongly planar poset with minimal and maximal elements $\hat{0}, \hat{1}$.
Let $\Delta$ be the set of bounded regions of $P$, and let $Q$ be the quiver resulting from adding the directed edge $\hat{0} \rightarrow \hat{1}$ in $H(P)$. Then, by Proposition \ref{bounded poset} and Equation \eqref{stronglyplanar}, we have
\[\psitor= \frac{1} 
 {x_{\hat{1}}-x_{\hat{0}}} 
 \frac{\prod_{\delta \in \Delta} (x_{\rm{min(\delta)}}-x_{\rm{max(\delta)}})}{\prod_{i \lessdot_{P} j} (x_i-x_j)}.
\]
\end{cor}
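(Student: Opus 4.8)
The plan is simply to chain together the two results already available: the identity
$\psitor = \frac{1}{x_{\hat{1}}-x_{\hat{0}}}\,\Psi^{P}(\mathbf{x})$
from Proposition \ref{bounded poset} (which needs only that $P$ be bounded), and Greene's evaluation \eqref{stronglyplanar} of $\Psi^{P}(\mathbf{x})$ for strongly planar posets. So the proof is essentially a substitution, and the only thing requiring care is verifying the hypotheses under which \eqref{stronglyplanar} takes its non-vanishing form.

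First I would check that $H(P)$ is connected, so that \eqref{stronglyplanar} applies in the form displayed (rather than giving $\Psi^{P}(\mathbf{x})=0$). This is automatic from boundedness: for any element $x$ there is a saturated chain $\hat{0}=x_0 \lessdot_P x_1 \lessdot_P \cdots \lessdot_P x_r = x$, which is a path from $\hat{0}$ to $x$ in $H(P)$; hence every vertex is connected to $\hat{0}$ and $H(P)$ is connected. Combined with the strong planarity hypothesis, Greene's theorem then yields $\Psi^{P}(\mathbf{x}) = \dfrac{\prod_{\delta \in \Delta}(x_{\min(\delta)}-x_{\max(\delta)})}{\prod_{i \lessdot_{P} j}(x_i-x_j)}$, where $\Delta$ is read off a crossing-free order embedding of $H(P)$ in $\mathbb{R}\times\mathbb{R}$; note that since $P$ already contains $\hat{0}$ and $\hat{1}$, no extra elements need to be adjoined to define $\Delta$, and strong planarity guarantees that each bounded region has a well-defined minimum and maximum.

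Substituting this expression for $\Psi^{P}(\mathbf{x})$ into the formula of Proposition \ref{bounded poset} gives exactly the claimed identity
$\psitor = \frac{1}{x_{\hat{1}}-x_{\hat{0}}}\,\frac{\prod_{\delta \in \Delta}(x_{\min(\delta)}-x_{\max(\delta)})}{\prod_{i \lessdot_{P} j}(x_i-x_j)}$. There is no genuine obstacle in this argument — all of the mathematical content sits in Proposition \ref{bounded poset} and in Greene's theorem — so the only point one must not overlook is the connectedness of $H(P)$, which is what licenses the use of the non-vanishing case of \eqref{stronglyplanar}.
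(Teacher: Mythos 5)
Your proposal is correct and matches the paper exactly: the corollary is stated in the paper as an immediate consequence of Proposition \ref{bounded poset} and Equation \eqref{stronglyplanar}, with no further argument given, and your substitution is precisely that chain. Your added observation that boundedness forces $H(P)$ to be connected (so the non-vanishing case of Greene's formula applies) is a correct and worthwhile detail the paper leaves implicit.
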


\begin{example} \rm Recall the strongly planar poset $P_2$ from Table \ref{table:1},
\begin{center}
\begin{tikzpicture}[scale = 0.9]
\node(1) at (-1,0) {2};
\node(2) at (0,0) {1};
\node(3) at (-1.5,1) {3};
\node(4) at (-0.5,1) {4};
\node(5) at (0.5,1) {5};
\node(6) at (0,2) {6};
\draw[line width = 0.25 mm,->](1) -- (3);
\draw[line width = 0.25 mm,->](1) -- (4);
\draw[line width = 0.25 mm,->](2) -- (4);
\draw[line width = 0.25 mm,->](2) -- (5);
\draw[line width = 0.25 mm,->](4) -- (6);
\draw[line width = 0.25 mm,->](5) -- (6);
\end{tikzpicture}.
\end{center}
We first adjoin $\hat{0}$ and $\hat{1}$ to $P_2$ in order to get a bounded poset $P_2'$. Then, we let $Q$ be the quiver resulting from adding the directed edge $\hat{0} \rightarrow \hat{1}$ to $H(P_2')$.
\pagebreak
\begin{multicols}{2}
\begin{figure}[H]
\centering
\begin{tikzpicture}[scale = 0.8]
\node(2) at (-1,0) {2};
\node(1) at (0,0) {1};
\node(3) at (-1.5,1) {3};
\node(4) at (-0.5,1) {4};
\node(5) at (0.5,1) {5};
\node(6) at (0,2) {6};
\node(7) at (-.5,-1) {$\hat{0}$};
\node(8) at (-0.75,3) {$\hat{1}$};
\node(9) at (-0.5,-1.75) {$P_2'$};
\draw[line width = 0.25 mm,->](2) -- (3);
\draw[line width = 0.25 mm,->](2) -- (4);
\draw[line width = 0.25 mm,->](1) -- (4);
\draw[line width = 0.25 mm,->](1) -- (5);
\draw[line width = 0.25 mm,->](4) -- (6);
\draw[line width = 0.25 mm,->](5) -- (6);
\draw[line width = 0.25 mm,->](7) -- (2);
\draw[line width = 0.25 mm,->](7) -- (1);
\draw[line width = 0.25 mm,->](3) -- (8);
\draw[line width = 0.25 mm,->](6) -- (8);
\end{tikzpicture}
\end{figure} \columnbreak

\begin{figure}[H]
\centering
\begin{tikzpicture}[scale = 0.8]
\node(2) at (-1,0) {2};
\node(1) at (0,0) {1};
\node(3) at (-1.5,1) {3};
\node(4) at (-0.5,1) {4};
\node(5) at (0.5,1) {5};
\node(6) at (0,2) {6};
\node(7) at (-.5,-1) {$\hat{0}$};
\node(8) at (-0.75,3) {$\hat{1}$};
\node(9) at (-0.5,-1.75) {$Q$};
\draw[line width = 0.25 mm,->](2) -- (3);
\draw[line width = 0.25 mm,->](2) -- (4);
\draw[line width = 0.25 mm,->](1) -- (4);
\draw[line width = 0.25 mm,->](1) -- (5);
\draw[line width = 0.25 mm,->](4) -- (6);
\draw[line width = 0.25 mm,->](5) -- (6);
\draw[line width = 0.25 mm,->](7) -- (2);
\draw[line width = 0.25 mm,->](7) -- (1);
\draw[line width = 0.25 mm,->](3) -- (8);
\draw[line width = 0.25 mm,->](6) -- (8);
\draw[line width = 0.25 mm, ->] (7) to [out=170,in=200]  (8);
\end{tikzpicture}  
\end{figure}
\end{multicols}
\noindent Considering the toric poset $[Q]$, 
\[\psitor = \frac{(x_2-x_{\hat{1}})(x_1-x_6)(x_{\hat{0}}-x_4)}{(x_{\hat{1}}-x_{\hat{0}})(x_{\hat{0}}-x_2)(x_{\hat{0}}-x_1)(x_2-x_3)(x_2-x_4)(x_1-x_4)(x_1-x_5)(x_4-x_6)(x_5-x_6)(x_3-x_{\hat{1}})(x_6-x_{\hat{1}})}.\]
\end{example}

We emphasize that when finding $\psitor$, we look at the bounded regions of $P_2'$, not of $Q$. Before stating the next result, we first review the definition of a \textit{shuffle set}. Given two ordered sets $\mathbf{b} = (b_1, \ldots, b_k)$ and $\mathbf{c} = (c_1, \ldots, c_j)$, the shuffle set $\mathbf{b} \shuffle \mathbf{c}$ is the set of all permutations of $(b_1,\ldots,b_k,c_1,\ldots,c_j)$ such that the subsequences of the $b_i$ and $c_i$ appear in the same order as in $\mathbf{b}$ and $\mathbf{c}$ respectively.

\begin{example} \rm Consider $\mathbf{b}=(b_1,b_2)$ and $\mathbf{c}=(c_1,c_2)$. Then, the shuffle set $\mathbf{b} \shuffle \mathbf{c}$ is the following:
\[\mathbf{b} \shuffle \mathbf{c}=\{(b_1,b_2,c_1,c_2),(b_1,c_1,b_2,c_2), (b_1,c_1,c_2,b_2), (c_1,b_1,b_2,c_2),(c_1,b_1,c_2,b_2), (c_1,c_2,b_1,b_2) \}.\]
    
\end{example}

The following corollary is a special case of Corollary \ref{Bounded, planar} applied to the poset $P$ on the right of Figure \ref{fig: KK with two posets}. Let $\mathbf{b}=(b_1,b_2, \ldots, b_k)$ and let $\mathbf{c}= (c_1,c_2,\ldots, c_j)$. As convention, let $b_{k+1}=c_{j+1}= \hat{1}$ and $b_0=c_0=\hat{0}$.

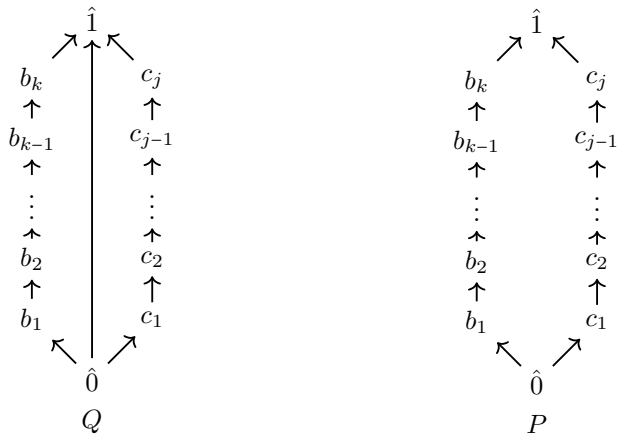
\begin{figure}[H]
\centering
\begin{tabular}{cc}
   \begin{tikzpicture}[scale = 0.8]
\node(2) at (0,0) {$\hat{0}$};
\node(3) at (-1,1) {$b_1$};
\node(4) at (-1,2) {$b_2$};
\node(5) at (-1,3) {$\vdots$};
\node(6) at (-1,4) {$b_{k-1}$};
\node(1) at (0,6) {$\hat{1}$};
\node(7) at (-1,5) {$b_k$};
\node(8) at (0,-0.7) {$Q$};
\draw[line width = 0.25 mm, ->] (2) -- (3);
\draw[line width = 0.25 mm, ->] (3) -- (4);
\draw[line width = 0.25 mm, ->] (4) -- (5);
\draw[line width = 0.25 mm, ->] (-1,3.4) -- (6);
\draw[line width = 0.25 mm, ->] (6) -- (7);
\draw[line width = 0.25 mm, ->] (7) -- (1);
\draw[line width = 0.25 mm, ->] (2) -- (1);
\node(8) at (1,1) {$c_1$};
\node(9) at (1,2) {$c_2$};
\node(10) at (1,3) {$\vdots$};
\node(11) at (1,4) {$c_{j-1}$};
\node(12) at (1,5) {$c_j$};
\draw[line width = 0.25 mm, ->] (2) -- (8);
\draw[line width = 0.25 mm, ->] (8) -- (9);
\draw[line width = 0.25 mm, ->] (9) -- (10);
\draw[line width = 0.25 mm, ->] (1,3.4) -- (11);
\draw[line width = 0.25 mm, ->] (11) -- (12);
\draw[line width = 0.25 mm, ->] (12) -- (1);
\end{tikzpicture} \hspace{3 cm} & 
   \begin{tikzpicture}[scale = 0.8]
\node(2) at (0,0) {$\hat{0}$};
\node(3) at (-1,1) {$b_1$};
\node(4) at (-1,2) {$b_2$};
\node(5) at (-1,3) {$\vdots$};
\node(6) at (-1,4) {$b_{k-1}$};
\node(1) at (0,6) {$\hat{1}$};
\node(7) at (-1,5) {$b_k$};
\node(8) at (0,-0.7) {$P$};
\draw[line width = 0.25 mm, ->] (2) -- (3);
\draw[line width = 0.25 mm, ->] (3) -- (4);
\draw[line width = 0.25 mm, ->] (4) -- (5);
\draw[line width = 0.25 mm, ->] (-1,3.4) -- (6);
\draw[line width = 0.25 mm, ->] (6) -- (7);
\draw[line width = 0.25 mm, ->] (7) -- (1);
\node(8) at (1,1) {$c_1$};
\node(9) at (1,2) {$c_2$};
\node(10) at (1,3) {$\vdots$};
\node(11) at (1,4) {$c_{j-1}$};
\node(12) at (1,5) {$c_j$};
\draw[line width = 0.25 mm, ->] (2) -- (8);
\draw[line width = 0.25 mm, ->] (8) -- (9);
\draw[line width = 0.25 mm, ->] (9) -- (10);
\draw[line width = 0.25 mm, ->] (1,3.4) -- (11);
\draw[line width = 0.25 mm, ->] (11) -- (12);
\draw[line width = 0.25 mm, ->] (12) -- (1);
\end{tikzpicture} 
\end{tabular}
    \caption{On the left we have one representative of the toric poset $[Q]$, which is the result of applying the procedure from Corollary \ref{Bounded, planar} to the poset $P$ on the right.}
     \label{fig: KK with two posets}
\end{figure}
\noindent 
{\bf Corollary \ref{cor: Kleiss-Kuijf relations}} \textit{({Kleiss-Kuijf Shuffle Relations})}
{\it For $\psitor$ where $[Q]$ is the toric poset shown in Figure \ref{fig: KK with two posets}, 

\begin{equation} \label{eq: KK relations toric poset way}
\psitor = \frac{(-1)^k}{\displaystyle {\prod_{r=0}^k (x_{b_{r+1}}-x_{b_r}) \cdot \prod_{s=0}^j (x_{c_s}-x_{c_{s+1}}) }},
\end{equation}
or equivalently,

\begin{equation} \label{eq: KK relations physics way}
\sum_{\mathbf{a} \in \mathbf{b} \shuffle  \mathbf{c}} \Psi_{\rm{tor}}^{[(\hat{1},\hat{0},\mathbf{a})]}(\mathbf{x}) = (-1)^k \Psi_{\rm{tor}}^{[(\hat{1},\rm{rev}(\mathbf{b}),\hat{0},\mathbf{c})]}(\mathbf{x}).
\end{equation}}

\begin{proof}
Corollary \ref{Bounded, planar} applied to $[Q]$ asserts

\begin{align}
\psitor &= \frac{1}{x_{\hat{1}}-x_{\hat{0}}} \cdot \frac{x_{\hat{0}}-x_{\hat{1}}}{\displaystyle{\prod^{k}_{r=0} (x_{b_r}-x_{b_{r+1}}) \cdot \prod^{j}_{s=0} (x_{c_s}-x_{c_{s+1}})}} \nonumber  \\&= \frac{(-1)^k}{\displaystyle {\prod_{r=0}^k (x_{b_{r+1}}-x_{b_r}) \cdot \prod_{s=0}^j (x_{c_s}-x_{c_{s+1}})}}.  \label{eqn: KK applied to [Q]}
\end{align} 

We now show Equation \eqref{eq: KK relations physics way}. By Proposition \ref{prop:Ltor-for-bounded-posets}, to find $\torext$, it suffices to find all ordinary linear extensions of $P$, which biject
with the linear extensions of the disjoint union of chains $b_1 < \cdots < b_k$ and $c_1 < \cdots < c_j$. Thus,
\begin{align*}
\torext
&=
\{[(\hat{0},\mathbf{a}, \hat{1})]: \mathbf{a} \in \mathbf{b} \shuffle \mathbf{c}\} \\
&=\{[(\hat{1},\hat{0},\mathbf{a})]:
\mathbf{a} \in \mathbf{b} \shuffle \mathbf{c} \}.
\end{align*}
By Definition \ref{def: toric Greene}, we have
\[\psitor = \sum_{\mathbf{a} \in \mathbf{b} \shuffle  \mathbf{c}} \Psi_{\rm{tor}}^{[(\hat{1},\hat{0},\mathbf{a})]}(\mathbf{x}).
\]
Rewriting Equation 
$\eqref{eqn: KK applied to [Q]}$ as 
$(-1)^k \Psi_{\rm{tor}}^{[(\hat{1},\rm{rev}(\mathbf{b}),\hat{0},\mathbf{c})]}(\mathbf{x})$, it follows that 
\[
\sum_{\mathbf{a} \in \mathbf{b} \shuffle  \mathbf{c}} \Psi_{\rm{tor}}^{[(\hat{1},\hat{0},\mathbf{a})]}(\mathbf{x})  = (-1)^k \Psi_{\rm{tor}}^{[(\hat{1},\rm{rev}(\mathbf{b}),\hat{0},\mathbf{c})]}(\mathbf{x}).
\qedhere \]
\end{proof}

\label{sec: properties of toric analogue}
Properties of $\Psi^{P}(\mathbf{x})$ shown by Boussicault, F\'eray, Lascoux, and Reiner in \cite{Cones} as well as properties by Greene in \cite{Greene} serve as motivation for the next few analogous properties of $\psitor$. Recall that in \cite{Cones}, the authors show that a poset $P$ is disconnected if and only if $\Psi^P(\mathbf{x})=0$. We will present a sufficient condition for when $\psitor = 0$, but first present a computational lemma that will help in the proof of this result.  It also appeared recently as \cite[Prop. 7.17]{Williams}, with a different proof.

\begin{lemma} \label{Lemma: Inner sum = 0} 
Let $\mathbf{a}=(a_1,a_2\ldots, a_m)$ and $\mathbf{b} =(b_1,b_2, \ldots, b_n)$. Then,

 \[\sum_{\substack{\mathbf{c} \in \mathbf{a} \shuffle \mathbf{b}}} \Psi_{\rm{tor}}^{[(1,\mathbf{c})]}(\mathbf{x})=0.\]
\end{lemma}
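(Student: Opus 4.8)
The plan is to prove the identity by induction on $m+n$, peeling off the largest (or first) available element from one of the two sequences and using a partial-fractions style telescoping. The base case $m+n=1$ (one sequence empty, the other a singleton $(a_1)$, say) gives a single term $\Psi_{\rm tor}^{[(1,a_1)]}(\mathbf{x})=\frac{1}{(x_1-x_{a_1})(x_{a_1}-x_1)}$, which is the odd rational function $-\frac{1}{(x_1-x_{a_1})^2}$; but in fact the cleanest base case is $m=n=0$, where the sum is empty, or more honestly one should set up the induction so that the genuinely nontrivial cancellation is visible. I would actually organize the argument around the key algebraic identity
\[
\frac{1}{(x_p-x_q)(x_q-x_r)} = \frac{1}{(x_p-x_r)}\left(\frac{1}{x_p-x_q}+\frac{1}{x_q-x_r}\right),
\]
which lets one extract a variable $x_q$ sitting between two neighbors in a cyclic word and split the corresponding Parke-Taylor factor into two shorter ones.

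The concrete inductive step I have in mind: fix the last letter of $\mathbf{a}$, namely $a_m$, and also the last letter of $\mathbf{b}$, namely $b_n$. In any shuffle $\mathbf{c}\in\mathbf{a}\shuffle\mathbf{b}$, one of $a_m$ or $b_n$ is the final letter of $\mathbf{c}$, so the sum splits as
\[
\sum_{\mathbf{c}\in\mathbf{a}\shuffle\mathbf{b}}\Psi_{\rm tor}^{[(1,\mathbf{c})]}(\mathbf{x})
= \sum_{\mathbf{c}'\in\mathbf{a}'\shuffle\mathbf{b}}\Psi_{\rm tor}^{[(1,\mathbf{c}',a_m)]}(\mathbf{x})
+ \sum_{\mathbf{c}''\in\mathbf{a}\shuffle\mathbf{b}'}\Psi_{\rm tor}^{[(1,\mathbf{c}'',b_n)]}(\mathbf{x}),
\]
where $\mathbf{a}'=(a_1,\dots,a_{m-1})$ and $\mathbf{b}'=(b_1,\dots,b_{n-1})$. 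The rational function $\Psi_{\rm tor}^{[(1,\mathbf{c}',a_m)]}$ differs from $\Psi_{\rm tor}^{[(1,\mathbf{c}')]}$ by the factor $\frac{x_{c'_{\rm last}}-x_1}{(x_{c'_{\rm last}}-x_{a_m})(x_{a_m}-x_1)}$, so the appended letter interacts only with the previous-last letter and with $1$. The aim is to rewrite the whole right-hand side so that it telescopes down to an expression already known to vanish by the inductive hypothesis (applied to $\mathbf{a}'\shuffle\mathbf{b}$ and to $\mathbf{a}\shuffle\mathbf{b}'$). For this I expect to need the standard ``shuffle product of inverse linear forms'' identity, essentially the statement that the sum of Parke-Taylor factors over a shuffle factorizes — indeed the cyclic/Parke-Taylor shuffle identity $\sum_{\mathbf{c}\in\mathbf{a}\shuffle\mathbf{b}}\mathrm{PT}(1,\mathbf{c},\infty)$ telescopes — which is exactly the classical Kleiss--Kuijf phenomenon but in a degenerate boundary case. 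An alternative, cleaner route: recognize that $\Psi_{\rm tor}^{[(1,\mathbf{c})]}(\mathbf{x})$ is, up to sign, the Parke-Taylor factor $\mathrm{PT}(1,\mathbf{c})$ on the circle, apply the known Kleiss--Kuijf shuffle identity $\sum_{\mathbf{a}\in\mathbf{b}\shuffle\mathbf{c}}\mathrm{PT}(n,\mathbf{b}\shuffle\mathbf{c},1)=(-1)^{|\mathbf{b}|}\mathrm{PT}(n,\mathrm{rev}(\mathbf{b}),1,\mathbf{c})$, and specialize to the case where one of the two shuffled blocks is \emph{empty}, whence the right-hand side becomes a Parke-Taylor factor containing the consecutive pair $1,\,\text{(empty)},\,\ldots$ — but here the subtlety is that the ``empty block'' case makes the two sequences on the right collide, producing a factor $\frac{1}{x_1-x_1}$-type singularity unless interpreted correctly. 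So I would avoid that shortcut and stick with the direct induction.

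The main obstacle, as I see it, is bookkeeping the telescoping cancellation cleanly: when I split off the last letters $a_m$ and $b_n$, the two resulting sums are over shuffles of \emph{different} pairs, and to combine them I must re-sum over the common prefix structure and see the cross-terms cancel. Concretely, after applying the partial-fraction identity to the factor involving the appended letter, each summand breaks into two pieces, one of which has $a_m$ (resp. $b_n$) "absorbed" and reproduces a term of the form $\Psi_{\rm tor}^{[(1,\mathbf{c}')]}$ times a factor depending only on $x_{a_m},x_1$, and the other of which merges $a_m$ with the previous letter. Matching the "merged" terms from the $\mathbf{a}$-side with those from the $\mathbf{b}$-side — these correspond to shuffles where $a_m$ and $b_n$ are adjacent at the end in either order — should be where the cancellation lives, and making that matching precise (keeping track of which shuffle of which pair each term comes from) is the delicate part. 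I would expect to introduce an auxiliary statement, proved by the same induction, computing $\sum_{\mathbf{c}\in\mathbf{a}\shuffle\mathbf{b}}\Psi_{\rm tor}^{[(1,\mathbf{c})]}$ \emph{with an extra marked endpoint} (i.e. a version where the cyclic word is $(1,\mathbf{c},z)$ for a fresh variable $z$), prove a recursion for that, and deduce the $z\to$(close the cycle) specialization; this strengthening is the usual trick that makes shuffle inductions go through, and I anticipate it will be needed here. Failing a slick induction, one can fall back on the geometric interpretation: $\Psi_{\rm tor}^{[(1,\mathbf{c})]}$ is the integral/character of the cone $c_{[(1,\mathbf{c})]}$, the shuffles $\mathbf{a}\shuffle\mathbf{b}$ are exactly the toric total extensions of the toric poset which is a disjoint union of the two chains $\mathbf{a},\mathbf{b}$ together with the vertex $1$ placed as a common source, and this toric poset has a cut vertex (namely $1$, or the underlying graph is disconnected after removing $1$), so Theorem~\ref{thm: Cut vertex} would give the vanishing directly — except that Lemma~\ref{Lemma: Inner sum = 0} is stated \emph{before} and used \emph{in the proof of} Theorem~\ref{thm: Cut vertex}, so that circular route is not available and the direct computation is genuinely required.
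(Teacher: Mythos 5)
Your opening move — partitioning the shuffle set by whether the final letter of $\mathbf{c}$ is $a_m$ or $b_n$ — is exactly the paper's first step, and it is the right one. But from there the paper does not embark on a partial-fractions telescope; it applies Corollary \ref{cor: Kleiss-Kuijf relations} \emph{to each of the two resulting sums}. This is the key point you talked yourself out of. You considered the Kleiss--Kuijf shortcut but rejected it on the grounds that the original sum $\sum_{\mathbf{c}\in\mathbf{a}\shuffle\mathbf{b}}\Psi_{\rm tor}^{[(1,\mathbf{c})]}$ has no second marked vertex to play the role of $\hat{1}$, so you would be forced to take one block empty and produce an ill-defined factor. That diagnosis is correct for the unsplit sum, but it is precisely the first split that cures it: after cycling, the first sum is $\sum_{\mathbf{c}'\in\hat{\mathbf{a}}\shuffle\mathbf{b}}\Psi_{\rm tor}^{[(a_m,1,\mathbf{c}')]}$, which is in the Kleiss--Kuijf form with $\hat{1}=a_m$, $\hat{0}=1$, and the shuffle over the two \emph{nonempty} chains $\hat{\mathbf{a}}$ and $\mathbf{b}$ — no degenerate case. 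Applying the corollary collapses it to $(-1)^{m-1}\Psi_{\rm tor}^{[(\mathrm{rev}(\mathbf{a}),1,\mathbf{b})]}$, and symmetrically the second sum collapses to $(-1)^{m}\Psi_{\rm tor}^{[(b_n,\mathrm{rev}(\mathbf{a}),1,\hat{\mathbf{b}})]}$. These two cyclic words are the \emph{same} equivalence class, so the two terms are equal up to sign and cancel.

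As written, then, your proposal has a genuine gap: the partial-fractions/telescoping induction and the auxiliary ``marked endpoint'' strengthening are only sketched, with the delicate cancellation step explicitly left open, and you forgo the route that actually closes the argument in two lines. The fix is not more bookkeeping; it is noticing that the freshly-peeled last letter \emph{is} the $\hat{1}$ you were missing. (Your observation that invoking Theorem \ref{thm: Cut vertex} would be circular is correct and well taken — the paper does indeed use this lemma to prove that theorem, not the other way around.)
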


\begin{proof}
Let $\mathbf{\hat{a}}=(a_1,a_2\ldots, a_{m-1})$, and $\mathbf{\hat{b}}=(b_1,b_2, \ldots, b_{n-1})$. One has the following equalities, justified below:
\begin{align*}
    \sum_{\substack{\mathbf{c} \in \mathbf{a} \shuffle \mathbf{b}}} \Psi_{\rm{tor}}^{[(1,\mathbf{c})]}(\mathbf{x}) &= \sum_{\substack{\mathbf{c'} \in \mathbf{\hat{a}} \shuffle \mathbf{b}}} \Psi_{\rm{tor}}^{[(1,\mathbf{c'},a_m)]}(\mathbf{x}) + \sum_{\substack{\mathbf{c''} \in \mathbf{a} \shuffle \mathbf{\hat{b}}}} \Psi_{\rm{tor}}^{[(1,\mathbf{c''},b_n)]}(\mathbf{x}) \notag \\
    &= \sum_{\substack{\mathbf{c'} \in \mathbf{\hat{a}} \shuffle \mathbf{b}}} \Psi_{\rm{tor}}^{[(a_m,1,\mathbf{c'})]}(\mathbf{x}) + \sum_{\substack{\mathbf{c''} \in \mathbf{a} \shuffle \mathbf{\hat{b}}}} \Psi_{\rm{tor}}^{[(b_n,1, \mathbf{c''})]}(\mathbf{x}) \notag \\
    & = (-1)^{m-1}\Psi_{\rm{tor}}^{[(\rm{rev}(\mathbf{a}), 1, \mathbf{b})]}(\mathbf{x}) + (-1)^{m}\Psi_{\rm{tor}}^{[(b_n,\rm{rev}(\mathbf{a}),1,\mathbf{\hat{b}})]}(\mathbf{x})  
    = 0.
\end{align*}

In the first equality, we partition the shuffle set into linear extensions that end in $a_m$ and those that end in $b_n$. The second to last equality follows from applying the Kleiss-Kuijf relations (Corollary \ref{cor: Kleiss-Kuijf relations}) to each sum. The last equality holds because $[(\rm{rev}(\mathbf{a}), 1, \mathbf{b})]$ and $[(b_n,\rm{rev}(\mathbf{a}),1,\mathbf{\hat{b}})]$ are cyclically equivalent.
\end{proof}

We now prove our next main result. Recall that a cut vertex is a vertex where if removed, the number of connected components of the graph increases.

\vskip.1in
\noindent {\bf Theorem \ref{thm: Cut vertex}}
{ \it Let $[Q]$ be a toric poset, and let $G$ be the underlying undirected graph of $[Q]$. If $G$ is either disconnected with at least three vertices or has a cut vertex, then $\Psi_{\rm{tor}}^{[Q]}(\mathbf{x})=0$.}

\begin{proof}
We first consider the case that $G$ is disconnected with at least three vertices. Then, we can partition the vertex set $V$ of $G$ into two disjoint nonempty sets $A, B$ such that there are no edges $\{a,b\}$ with $a \in A, b \in B$. Since $|V|>2$, at least one of these sets has $2$ vertices. Without loss of generality, assume vertex $1$ is in this set. We will call this set $A$. Then, using Proposition \ref{prop: Toric lin ext reformulations} part (ii) to find $\torext$,
we have 
\[\torext= \displaystyle \bigsqcup_{Q' \in [Q]_1} 
\left\{[1 \hat{w}]: \hat{w} \in \mathcal{L}(Q'-\{1\}) \right\}.\]
Moreover, we can note that for $Q' \in [Q]$ where $1$ is a source in  $Q'$, an ordinary linear extension of $Q'-\{1\}$ is a shuffle of a linear extension $\mathbf{a}$ of elements in $Q_1=Q'|_{A-\{1\}}$ and a linear extension $\mathbf{b}$ of elements in $Q_2 = Q'|_{B}$. Thus, to find $\Psi_{\mathrm{tor}}^{[Q]}(\mathbf{x})$ using Definition \ref{def: toric Greene}, we have
\begin{equation} \label{eq for cut vertex}
\psitor= \sum_{
Q' \in [Q]_1} \sum_{\substack{\mathbf{a} \in \mathcal{L}(Q_1) \\ \mathbf{b} \in \mathcal{L}(Q_2)}} \sum_{\mathbf{c} \in \mathbf{a} \shuffle \mathbf{b}} \Psi_{\rm{tor}}^{[(1,\mathbf{c})]}(\mathbf{x}).
\end{equation}
By Lemma \ref{Lemma: Inner sum = 0}, the inner sum $\sum_{\mathbf{c} \in \mathbf{a} \shuffle \mathbf{b}} \Psi_{\rm{tor}}^{[(1,\mathbf{c})]}(\mathbf{x})$ is $0$. 

Now, we consider the case where $G$ has a cut vertex. Without loss of generality, let the cut vertex be the element $1$. Since $1$ is a cut vertex, if we were to remove $1$, we can once again partition the vertex set $V$ of $G$ into two disjoint nonempty sets $A, B$ such that there are no edges $\{a,b\}$ with $a \in A, b \in B$. Thus, for $Q' \in [Q]$ where $1$ is a source in  $Q'$, an ordinary linear extension of $Q'-\{1\}$ is a shuffle of a linear extension $\mathbf{a}$ of elements in $Q_1=Q'|_{A-\{1\}}$ and a linear extension $\mathbf{b}$ of elements in $Q_2 = Q'|_{B}$. To find $\psitor$, we again have Equation \eqref{eq for cut vertex}, so applying Lemma \ref{Lemma: Inner sum = 0}, we have that $\psitor=0$.
\end{proof}

\begin{remark} \rm In Theorem~\ref{thm: Cut vertex}, we need to assume the toric poset $[Q]$ has at least three vertices since if $[Q]$ has exactly two vertices $1,2$ and no arcs, then
$$
\psitor=\frac{1}{(x_1-x_2)(x_2-x_1)}=\frac{-1}{(x_1-x_2)^2} \neq 0.
$$
\end{remark}

\begin{remark} \rm Theorem \ref{thm: Cut vertex} gives only a sufficient condition for the vanishing of $\psitor$. We depict below a quiver $Q$ whose toric poset $[Q]$ has $\psitor = 0$, but where the vanishing is not implied by Theorem \ref{thm: Cut vertex}.
\begin{center}
\begin{tikzpicture}[scale=0.9]
\node(1) at (0,0) {$1$};
\node(2) at (-1,1) {$2$};
\node(3) at (0,1) {$3$};
\node(4) at (1,1) {$4$};
\node(5) at (0,2){$5$};
\draw[line width = 0.25 mm, ->] (1) -- (2);
\draw[line width = 0.25 mm, ->] (1) -- (4);
\draw[line width = 0.25 mm, ->] (1) -- (3);
\draw[line width = 0.25 mm, ->] (3) -- (5);
\draw[line width = 0.25 mm, ->] (2) -- (5);
\draw[line width = 0.25 mm, ->] (4) -- (5);
\end{tikzpicture}
\end{center}
\end{remark}

For ordinary posets, Boussicault, F\'eray, Lascoux, and Reiner show that linear terms in the denominator of $\Psi^P(\mathbf{x})$ correspond to cover relations of $P$. 
\begin{theorem} \emph{(\cite[Cor. 5.2]{Cones})}
For a connected poset $P$, the minimal denominator of $\Psi^{P}(\mathbf{x})$ is $\prod_{i\lessdot_P j}(x_i-x_j)$.
\end{theorem}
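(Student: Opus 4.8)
The plan is to prove two complementary facts about $\Psi^P(\mathbf{x})=\sum_{w\in\mathcal{L}(P)}\Psi^w(\mathbf{x})$, where $\Psi^w(\mathbf{x})=\prod_{k=1}^{n-1}(x_{w_k}-x_{w_{k+1}})^{-1}$: first, that $D_P:=\prod_{i\lessdot_P j}(x_i-x_j)$ is a valid common denominator, i.e.\ $N_P:=D_P\cdot\Psi^P(\mathbf{x})$ is a polynomial; and second, that in the expression $\Psi^P(\mathbf{x})=N_P/D_P$ nothing cancels. Since the linear forms $x_i-x_j$ indexed by the covering relations $i\lessdot_P j$ are pairwise non-associate irreducibles --- distinct unordered pairs give non-proportional forms, and $i\lessdot_P j$ excludes $j\lessdot_P i$ --- the polynomial $D_P$ is squarefree, so these two facts together identify $D_P$ as the minimal denominator. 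Throughout I would use the companion result recalled above, that $\Psi^{P'}(\mathbf{x})\ne 0$ if and only if $H(P')$ is connected.

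For the first fact, I would begin from the observation that each $\Psi^w(\mathbf{x})$ is already in lowest terms with squarefree denominator, because the forms $x_{w_1}-x_{w_2},\dots,x_{w_{n-1}}-x_{w_n}$ are linearly independent. Hence $\Psi^P(\mathbf{x})$ has at worst simple poles, all along hyperplanes $x_a=x_b$, and $x_a=x_b$ can occur only if $a,b$ are adjacent in some linear extension, i.e.\ only if $a,b$ are incomparable or $\{a,b\}$ is a covering relation. (If $a<_P b$ is a non-covering relation then no $w$ places $a,b$ adjacently, since an element strictly between them must be placed between them, so $\Psi^P(\mathbf{x})$ is automatically regular along $x_a=x_b$.) For incomparable $a,b$, I would group the linear extensions in which $a,b$ are adjacent into pairs $\{w,w'\}$, where $w'$ is obtained from $w$ by swapping the adjacent block $\cdots a\,b\cdots\leftrightarrow\cdots b\,a\cdots$; this matches $\{w:a\text{ immediately before }b\}$ bijectively with $\{w:b\text{ immediately before }a\}$, because swapping two adjacent incomparable elements preserves being a linear extension. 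Writing $u,v$ for the neighbours of the block (with the obvious modifications when the block sits at an end of the word), a short computation gives
\[
\Psi^w(\mathbf{x})+\Psi^{w'}(\mathbf{x})=\Big(\prod_k (x_{w_k}-x_{w_{k+1}})^{-1}\Big)\cdot\frac{x_u-x_v}{(x_u-x_a)(x_u-x_b)(x_a-x_v)(x_b-x_v)},
\]
the product running over the positions $k$ whose factor involves neither $a$ nor $b$; crucially $x_a-x_b$ has cancelled and is not reintroduced. Summing this over all the pairs, together with the contributions of the $w$ in which $a,b$ are not adjacent (each regular along $x_a=x_b$), shows $\Psi^P(\mathbf{x})$ is regular along $x_a=x_b$ for every non-covering pair $\{a,b\}$. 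Thus the only poles of $\Psi^P(\mathbf{x})$ are simple poles along the hyperplanes $x_i=x_j$ with $i\lessdot_P j$, so $N_P=D_P\,\Psi^P(\mathbf{x})$ has no poles at all and is a polynomial.

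For the second fact, fix a cover $i\lessdot_P j$ and let $P'$ be the poset obtained from $P$ by identifying $i$ and $j$ into a single element $\ast$; checking that $P'$ is a genuine poset uses precisely that nothing lies strictly between $i$ and $j$. Contracting the adjacent pair $i,j$ (in this order) back to $\ast$ is a bijection from $\mathcal{L}(P')$ onto the set of $w\in\mathcal{L}(P)$ in which $i$ is immediately followed by $j$ --- and this is the only way $i,j$ can be adjacent in a linear extension, since $i<_P j$. A residue computation then yields $\lim_{x_i\to x_j}(x_i-x_j)\Psi^P(\mathbf{x})=\Psi^{P'}(\mathbf{x}')$, where $x_{\ast}$ plays the role of the common value of $x_i$ and $x_j$: the $w$ with $i,j$ non-adjacent vanish in the limit, and each $w$ with $i$ just before $j$ contributes exactly the corresponding term of $\Psi^{P'}$. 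Since $H(P)$ is connected, so is $H(P')$ --- the comparability graph stays connected when the comparable elements $i,j$ are identified, and a poset with connected comparability graph has connected Hasse diagram --- so $\Psi^{P'}(\mathbf{x}')\ne 0$ by the companion result, and therefore $\Psi^P(\mathbf{x})$ genuinely has a (simple) pole along $x_i=x_j$, i.e.\ $(x_i-x_j)\nmid N_P$. As this holds for every cover and $D_P$ is squarefree, $\gcd(N_P,D_P)=1$ and $D_P$ is the minimal denominator.

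The skeleton above is short, and the work lives in the two computations. In the first step the routine-but-delicate points are the identity for $\Psi^w+\Psi^{w'}$, the end-of-word boundary cases, and checking that the new denominator factors are never $x_a-x_b$ so that no pole returns. I expect the main obstacle to be the contraction lemma in the second step: one must verify that $P'$ really is a poset (this is exactly where one needs $i\lessdot_P j$, not merely $i<_P j$), that contraction is a bijection on the relevant linear extensions, that the residue limit distributes over the sum term-by-term, and that connectedness of $H(P)$ passes to $H(P')$ so that the cited nonvanishing criterion applies.
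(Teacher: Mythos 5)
The statement you are proving is not proved in this paper: it is quoted verbatim as \cite[Cor.~5.2]{Cones}, a result of Boussicault, F\'eray, Lascoux, and Reiner, and used as a black box. So there is no ``paper's own proof'' to compare against; I can only assess your argument on its merits, and it holds up.

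Your two-step plan is the natural one and I could not find a gap. Step one (that $D_P=\prod_{i\lessdot_P j}(x_i-x_j)$ is a legitimate common denominator) rests on the telescoping identity
\[
\frac{1}{(x_u-x_a)(x_a-x_b)(x_b-x_v)}+\frac{1}{(x_u-x_b)(x_b-x_a)(x_a-x_v)}
=\frac{x_u-x_v}{(x_u-x_a)(x_u-x_b)(x_a-x_v)(x_b-x_v)},
\]
which I verified, together with its two boundary degenerations (block at either end of the word), which also close up without leaving a pole along $x_a=x_b$. The pairing of linear extensions in which $a,b$ are adjacent is a genuine involution because swapping adjacent incomparable elements preserves the linear-extension property, and the residual denominator factors $(x_u-x_a),(x_u-x_b),(x_a-x_v),(x_b-x_v)$ never coincide with $\pm(x_a-x_b)$ since $u,v\notin\{a,b\}$.

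Step two (that no cover factor cancels out of the numerator) via contraction and the residue $\lim_{x_i\to x_j}(x_i-x_j)\Psi^P(\mathbf{x})=\Psi^{P'}(\mathbf{x}')$ is also correct, but a couple of points that you flag as ``routine'' are worth spelling out. The quotient $P'$ really is a poset only because $i\lessdot_P j$: any putative cycle $\ast<a_1<\cdots<a_k<\ast$ in $P'$ forces either $i<a_1$ and $a_k<j$ (giving an element strictly between $i$ and $j$), or $j<a_1$ and $a_k<i$ (contradicting $i<j$), so antisymmetry survives. Note also that the contraction does genuinely add new relations ($a<_{P'} b$ when $a<_P j$ and $i<_P b$ but $a,b$ are $P$-incomparable), so the asserted bijection $\mathcal{L}(P')\leftrightarrow\{w\in\mathcal{L}(P): i \text{ immediately precedes } j\}$ is not automatic; it works because in any such $w$ the adjacency of $i,j$ forces $a$ to sit before $i$ and $b$ after $j$, so the new relations of $P'$ are respected. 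Finally, your use of the connectivity criterion ($H(P')$ connected $\Rightarrow \Psi^{P'}\neq 0$) is not circular: that criterion is itself a separate cited result of \cite{Cones}, and in any case your argument could be closed into an induction on $|P|$ since $P'$ has one fewer element. With these details filled in, the argument that $\gcd(N_P,D_P)=1$ (and hence that the squarefree $D_P$ is the minimal denominator) is complete.
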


For toric posets, we have the following result. 

\vspace{1 mm}
\noindent {\bf Theorem \ref{thm: tor denom}}
{\it For $[Q]$ a toric poset, $\psitor$ can always be expressed over the denominator of  \[\prod_{\{i,j\} \in [Q]_{\rm{Hasse}}}(x_i-x_j)\] where we take the product over all edges $\{i,j\}$ in $[Q]_{\rm{Hasse}}$.}

\begin{proof}
We first note that by Definition \ref{def: toric Greene}, the denominator of $\psitor$ can only contain factors of the form $(x_i-x_j)$. A linear factor $(x_i-x_j)$ will not appear in the denominator of $\psitor$ if the sum of all $\Psi_{\rm{tor}}^{[w]}(\mathbf{x})$ that have $(x_i-x_j)$ in its denominator can be rewritten without this linear factor. We show that if there is no edge between vertices $i$ and $j$ in the underlying graph of $[Q]_{\rm{Hasse}}$, then the linear factor $(x_i-x_j)$, up to sign, will not appear in the denominator of $\psitor$.

Recall that the edge $\{i,j\}$ is in $[Q]_{\rm{Hasse}}$ if and only if $i$ and $j$ live on a toric directed path of length one. Thus, there are two cases to consider for when an edge $\{i,j\}$ is not in $[Q]_{\rm{Hasse}}$:

\begin{enumerate}[i.]
\item Vertices $i,j$ lie on a toric directed path of length greater than $1$.
\item Vertices $i,j$ do not lie on a toric directed path, i.e. they are torically incomparable.
\end{enumerate}

Using Definition \ref{def: toric Greene}, we find $\torext$ in order to compute $\psitor$. By Proposition \ref{prop: Toric lin ext reformulations} part (i), to find $\torext$ we must first compute the set of ordinary linear extensions for each quiver in $[Q]$. We first consider case (i) where vertices $i,j$ lie on some toric directed path of length greater than $1$ in $[Q]$. Then, for a quiver $Q' \in [Q]$, there is a directed path between $i$ and $j$ having length greater than $1$. This quiver does not have any ordinary linear extensions where $i$ is adjacent to $j$. Thus, it will not contribute any toric total extensions where $i$ is adjacent to $j$, so a linear factor of $(x_i-x_j)$ will not appear in $\psitor$.

We now consider case (ii). For a quiver $Q' \in [Q]$, there is either a directed path between $i,j$ with length greater than $1$ or $i,j$ are ordinary incomparable elements in $Q'$. The former case is the same as case (i). Now consider the latter case. If  $i,j$ are ordinary incomparable for $Q' \in [Q]$, for any linear extension $(a_1,a_2,\ldots, a_{l-2},i,j,a_{l+1},\ldots, a_n)$, there exists a linear extension $(a_1,a_2,\ldots, a_{l-2},j,i,a_{l+1},\ldots, a_n)$. Therefore, by Proposition \ref{prop: Toric lin ext reformulations} part (i), the set $\torext$ contains the toric total extensions \[[(a_1,a_2,\ldots, a_{l-2},i,j,a_{l+1},\ldots, a_n)] \text{ and } [(a_1,a_2,\ldots, a_{l-2},j,i,a_{l+1},\ldots, a_n)].\]
All toric total extensions of $[Q]$ where $i$ is adjacent to $j$ pair up in this way. Thus, it suffices to show that the expression
\begin{equation}
\label{sum-without-pole}
\Psi^{[(a_1,a_2,\cdots, a_{l-2},i,j,a_{l+1},\ldots, a_n)]}_{\rm{tor}}(\mathbf{x})+ \Psi^{[(a_1,a_2,\ldots, a_{l-2},j,i,a_{l+1},\ldots, a_n)]}_{\rm{tor}}(\mathbf{x})
\end{equation}
results in a rational function that does not have a factor of $x_i-x_j$ in its denominator.

After pulling out a common denominator factor of $$\prod_{\substack{r=1,2,\ldots,n-1\\r \neq l-2,l-1,l}} (x_{a_r}-x_{a_{r+1}})
$$
from both terms of \eqref{sum-without-pole}, and abbreviating
$x:=x_{a_{l-2}}, y:=x_{a_{l+1}},
$
we check the following identity, showing the rational function on the left can be rewritten without any denominator factor of $x_i-x_j$:
$$
\frac{1}{(x-x_i)(x_i-x_j)(x_j-y)}
+ \frac{1}{(x-x_j)(x_j-x_i)(x_i-y)}
=\frac{x-y}{(x-x_i)(x-x_j)(x_i-y)(x_j-y)}.\qedhere
$$
\end{proof}

\begin{remark} \rm
We note that a similar proof strategy
to that of Theorem \ref{thm: tor denom} appears in the recent work of Parisi, Sherman-Bennett, Tessler, and Williams, in their proof of \cite[Lem. 5.7]{Williams}.
\end{remark}

\begin{remark} \label{Remark: tordenom not necessarily smallest} \rm
For $[Q]$ in Figure \ref{fig:KK toric poset family}, we emphasize that the minimal denominator of $\psitor$,
as computed in Corollary~\ref{cor: Kleiss-Kuijf relations}, 
does not contain the linear factor $(x_{\hat{0}}-x_{\hat{1}})$, even though the edge $\{ \hat{0}, \hat{1}\}$ does appear in $[Q]_{\rm{Hasse}}$. 
\end{remark}

\section{Tricolored Subdivisions and Partial Cyclic Orders}  \label{sec: relationship to tricolored subdivisions}
In this section, we discuss the relationship between our Theorem \ref{thm: Cut vertex}  for $\psitor$ and an identity for Parke-Taylor factors $\mathrm{PT}(w)$ shown by Parisi, Tessler, Sherman-Bennett, and Williams in \cite[Thm. 7.11]{Williams}. 

Recall Equation \eqref{PT(w)}, which states

\[
\mathrm{PT}(w) = \frac{1}{(x_{w_2}-x_{w_1})(x_{w_3}-x_{w_2})\cdots(x_{w_n}-x_{w_{n-1}})(x_{w_1}-x_{w_n})}.
\]
In order to discuss their results for $\mathrm{PT}(w)$, we first must discuss \textit{partial cyclic orders} as well as \textit{tricolored subdivisions}.

\begin{definition}[\cite{megiddo1976partial}] \label{Partial cyclic order definition} \rm
A \textit{partial cyclic order} on a set $V$ is a ternary relation $C \subseteq V^3$ such that for all distinct $a,b,c,d \in C$:
\begin{enumerate}
\item (Cyclicity) $(a,b,c) \in C \implies (c,a,b) \in C$
\item (Asymmetry) $(a,b,c) \in C \implies (c,b,a) \notin C$
\item (Transitivity) $(a,b,c) \in C$ and $(a,c,d) \in C \implies (a,b,d) \in C$.
\end{enumerate}
A partial cyclic order $C$ is a \textit{total cyclic order} if for all $a,b,c \in V$, either $(a,b,c) \in C$ or $(a,c,b) \in C$.
\end{definition}

\begin{definition}  \label{cyclic extension} \rm
A total cyclic order $C$ is a \textit{circular extension} of a cyclic order $C'$ if $C' \subseteq C$.    
\end{definition}

While total cyclic orders are equivalent to toric posets $[w]$ for total orders $w$ (and thus a cyclic extension of a partial cyclic order can be seen to be the same as a toric total extension of a toric poset), in general partial cyclic orders are not the same as toric posets.
For instance, in this section, we will discuss the set of total cyclic orders Ext($C$) of a partial cyclic order $C$, and illustrate how this set behaves differently from  $\torext$ for a toric poset $[Q]$. We also note that Ext($C$) can sometimes be empty, which never occurs for $\torext$. An example of a partial cyclic order with an empty set of total cyclic orders is shown in \cite[Ex. 5]{megiddo1976partial}.

In \cite{Williams}, the authors associate partial cyclic orders to tricolored subdivisions of polygons.

\begin{definition}[\cite{Williams}] \rm
Let $\mathbf{P}_n$ be a convex $n$-gon with vertices labeled from $1$ to $n$ in clockwise order. A \textit{tricolored subdivision} $\tau$ is a partition of $\mathbf{P}_n$ into black, white, and grey polygons such that all vertices of the polygons are vertices of $\mathbf{P}_n$ and two polygons sharing an edge have different colors.
\end{definition}

\begin{example} \rm In Figure \ref{fig:Tricolored subdivision}, we give two examples of tricolored subdivisions.

\label{Partial cyclic orders}
\begin{figure}[H] 
    \centering
\begin{tabular}{cc}
    \begin{tikzpicture}[scale = 0.5]
   \newdimen\R
   \R=2.7cm
   \draw (0:\R) \foreach \x in {60,120,...,360} {  -- (\x:\R) };
   \foreach \x/\l/\p in
     { 60/{4}/above,
      120/{3}/above,
      180/{2}/left,
      240/{1}/below,
      300/{6}/below,
      360/{5}/right
     }
     \node[inner sep=1pt,circle,draw,fill,label={\p:\l}] at (\x:\R) {};

      \filldraw[gray, opacity = 0.3] (0:\R) \foreach \x in {120,240,360} {  -- (\x:\R) };
   \foreach \x/\l/\p in
     {120/{3}/above,
      240/{1}/below,
      360/{5}/right}
     \node[inner sep=1pt,circle,draw,fill,label={\p:\l}] at (\x:\R) {};
     \draw(0:\R) \foreach \x in {120,240,360} {  -- (\x:\R) };
   \foreach \x/\l/\p in
     {120/{3}/above,
      240/{1}/below,
      360/{5}/right}
     \node[inner sep=1pt,circle,draw,fill,label={\p:\l}] at (\x:\R) {};
\end{tikzpicture} & \hspace{2 cm}
    \begin{tikzpicture}[scale = 0.5]
   \newdimen\R
   \R=2.7cm
   \draw (0:\R) \foreach \x in {60,120,...,360} {  -- (\x:\R) };
   \foreach \x/\l/\p in
     { 60/{4}/above,
      120/{3}/above,
      180/{2}/left,
      240/{1}/below,
      300/{6}/below,
      360/{5}/right
     }
     \node[inner sep=1pt,circle,draw,fill,label={\p:\l}] at (\x:\R) {};

      \filldraw[gray, opacity = 0.3] (0:\R) \foreach \x in {60,120,180,240} {  -- (\x:\R) };
   \foreach \x/\l/\p in
     {60/{4}/above,
      120/{3}/above,
      180/{2}/left,
      240/{1}/below}
     \node[inner sep=1pt,circle,draw,fill,label={\p:\l}] at (\x:\R) {};

     \draw(0:\R) \foreach \x in {60,240,360} {  -- (\x:\R) };
   \foreach \x/\l/\p in
     {60/{4}/above,
      240/{1}/below,
      360/{5}/right}
     \node[inner sep=1pt,circle,draw,fill,label={\p:\l}] at (\x:\R) {};

\filldraw[black, opacity = 0.75](0:\R) \foreach \x in {60,240,360} {  -- (\x:\R) };
   \foreach \x/\l/\p in
     {60/{4}/above,
      240/{1}/below,
      360/{5}/right}
     \node[inner sep=1pt,circle,draw,fill,label={\p:\l}] at (\x:\R) {};

\end{tikzpicture}
\end{tabular}
    \caption{Tricolored subdivisions}
    \label{fig:Tricolored subdivision}
\end{figure}
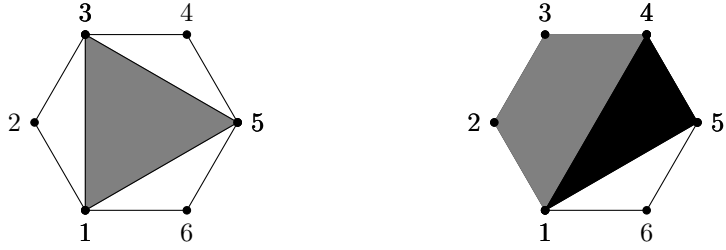
\end{example}

In \cite{Williams}, the authors associate a partial cyclic order to a tricolored subdivision
as follows.

\begin{definition}[\cite{Williams}] \rm \label {Partial cyclic order of tricolored subdivision} 
Let $\tau$ be a tricolored subdivision of $\mathbf{P}_n$. Consider only the black or white polygons in the subdivision. If a polygon $P_i$ in the subdivision is white, let $v_1, v_2, \ldots v_m$ be the clockwise order of its vertices; otherwise, let $v_1, v_2, \ldots v_m$ be the counterclockwise order of its vertices. Then, associate the cyclic chain $[(v_1,v_2, \ldots, v_m)]$ to $P_i$. The \textit{$\tau$-order} is the union of all of these cyclic chains, and this union is a partial cyclic order.
\end{definition}

\begin{example} \label{Cyclic extensions of tricolored subdivision} \rm Consider the tricolored subdivision on the left in Figure \ref{fig:Tricolored subdivision}. The three white triangles yield the cyclic chains $[(1,2,3)],[(1,5,6)],[(3,4,5)]$. In the tricolored subdivision on the right, the white triangle yields the cyclic chain $[(1,5,6)]$ and the black triangle yields $[(1,5,4)]$.
\end{example}

In \cite{Williams}, the authors prove the following theorem on cyclic extensions which arise from tricolored subdivisions. 

\begin{theorem}\emph{(\cite[Thm. 7.11]{Williams})}
\label{Tricolored subdivision theorem}
Let $\tau$ be a tricolored subdivision of $\mathbf{P}_n$ with at least one grey polygon, and let $C_{\tau}$ be the corresponding partial cyclic order. Then,
\[
\sum_{[w] \in \rm{Ext}(C_{\tau})} \mathrm{PT}(w)=0.
\]
\end{theorem}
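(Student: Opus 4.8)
The plan is to reduce Theorem \ref{Tricolored subdivision theorem} to the vanishing result we have already proven for $\psitor$, namely Theorem \ref{thm: Cut vertex}, by producing, for each grey polygon of $\tau$, a toric poset whose toric total extensions are exactly $\mathrm{Ext}(C_\tau)$ and whose underlying graph is disconnected or has a cut vertex. Concretely, I would first recall the bijection mentioned in the excerpt: a total cyclic order on $[n]$ is the same data as a toric total order $[w]$, and $C_\tau \subseteq C$ means precisely that $[w]$ is a toric total extension of the toric poset $[Q_\tau]$ obtained as the toric transitive closure of the cyclic chains attached to the black and white polygons. (One must check that these cyclic chains, coming from a tricolored subdivision, are mutually compatible enough to define an honest toric poset; this should follow from the fact that they fit together along the edges of $\mathbf{P}_n$ and hence along a common cyclic total order realizing $\tau$.) Granting this, $\mathrm{Ext}(C_\tau) = \mathcal{L}_{\mathrm{tor}}([Q_\tau])$ and the theorem says $\Psi^{[Q_\tau]}_{\mathrm{tor}}(\mathbf{x}) = 0$ up to the global $\pm 1$ relating $\mathrm{PT}(w)$ and $\Psi_{\mathrm{tor}}^{[w]}(\mathbf{x})$.

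The key step is then to show that the presence of a grey polygon forces the underlying graph $G_\tau$ of $[Q_\tau]_{\mathrm{Hasse}}$ to be disconnected or have a cut vertex, so that Theorem \ref{thm: Cut vertex} applies. The intuition is that a grey polygon with vertices, say, $v_{i_1} < v_{i_2} < \cdots < v_{i_\ell}$ (in clockwise order around $\mathbf{P}_n$) separates the $n$-gon into the chords $v_{i_1}v_{i_2}, v_{i_2}v_{i_3}, \ldots, v_{i_\ell}v_{i_1}$, and all black/white polygons lie in the regions cut off by these chords; a vertex not among the $v_{i_r}$ is torically comparable only to vertices in its own region, so it has no edge in the toric Hasse diagram across the grey polygon. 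This gives a separation of $G_\tau$: if the grey polygon is a chord (bigon-type, $\ell = 2$) the two endpoints form a $2$-cut, and more generally one checks that removing the vertices of the grey polygon disconnects $G_\tau$, which — after a short argument, treating the degenerate cases where some region is empty — yields either disconnectedness with $\ge 3$ vertices or a cut vertex. I would organize this as: (a) identify the chords of the grey polygon; (b) show every toric directed path of $[Q_\tau]$ stays within one region; (c) conclude the graph-theoretic separation; (d) invoke Theorem \ref{thm: Cut vertex}.

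I expect step (b) — controlling which pairs of vertices lie on a common toric directed path of $[Q_\tau]$ — to be the main obstacle, since it requires understanding the toric transitive closure of a union of cyclic chains coming from the subdivision, and ruling out "long-range" toric comparabilities that might sneak across a grey polygon via a concatenation of chains from several black/white polygons. The cleanest route is probably to exhibit one explicit total cyclic order $[w^\ast]$ compatible with $\tau$ (read off the clockwise boundary order $1, 2, \ldots, n$ of $\mathbf{P}_n$, which respects every white chain directly and every black chain in reverse, hence lies in $\mathrm{Ext}(C_\tau)$), use it to pin down the toric Hasse diagram's edge set as a subset of the "short chords" appearing in black/white polygons, and then argue combinatorially that none of these short chords crosses the grey polygon's chords, because in a planar subdivision two chords in different faces cannot cross. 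Once edges are confined to regions, the separation and the appeal to Theorem \ref{thm: Cut vertex} are immediate. A final remark would address the sign: since $\mathrm{PT}(w) = \pm \Psi_{\mathrm{tor}}^{[w]}(\mathbf{x})$ with a sign depending only on $n$ (not on $w$), vanishing of one sum is equivalent to vanishing of the other.
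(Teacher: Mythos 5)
This statement is not proved in the paper at all: it is quoted verbatim from \cite[Thm.\ 7.11]{Williams}, and the entire point of Section \ref{sec: relationship to tricolored subdivisions} is to show that it is \emph{not} a consequence of Theorem \ref{thm: Cut vertex} (nor conversely). Your proposed reduction therefore cannot work, and the paper itself supplies the counterexample. The first fatal gap is the identification $\mathrm{Ext}(C_\tau) = \mathcal{L}_{\rm{tor}}([Q_\tau])$. Circular extensions of a partial cyclic order and toric total extensions of the toric poset generated by the same cyclic chains are genuinely different: for the left subdivision of Figure \ref{fig:Tricolored subdivision}, with chains $[(1,2,3)],[(1,5,6)],[(3,4,5)]$, the set $\mathrm{Ext}(C_{\tau})$ has $15$ elements, while the associated toric poset $[Q]$ satisfies $\torext = \mathrm{Ext}(C_{\tau})\setminus\{[(1,2,3,4,5,6)]\}$; the chambers $c_{[w]}$ for $[w]\in\mathrm{Ext}(C_\tau)$ need not assemble into a single toric chamber, so a circular extension of $C_\tau$ need not be a toric total extension of $[Q_\tau]$.

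The second gap is step (c): the grey polygon does not force the underlying graph of $[Q_\tau]$ to be disconnected or to have a cut vertex. In that same example the grey triangle is $\{1,3,5\}$, yet the representative drawn in Section \ref{sec: relationship to tricolored subdivisions} has a connected underlying graph with no cut vertex, and indeed $\psitor = -\mathrm{PT}([1,2,3,4,5,6]) \neq 0$. Your planarity intuition fails because the chords internal to distinct black/white faces, while non-crossing, can still chain together around the grey polygon and keep the graph $2$-connected. So even in the cases where your step (a)--(b) analysis goes through, the conclusion you need from Theorem \ref{thm: Cut vertex} is unavailable. A correct proof of Theorem \ref{Tricolored subdivision theorem} must work directly with $\mathrm{Ext}(C_\tau)$ (as is done in \cite{Williams}) rather than passing through a toric poset; the only overlap with the present paper is the special case, also discussed in Section \ref{sec: relationship to tricolored subdivisions}, where the two extension sets happen to coincide and the graph happens to satisfy the hypotheses of Theorem \ref{thm: Cut vertex}.
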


Since cyclic extensions can be seen to be the same as toric total extensions, it is natural to wonder how Theorem \ref{Tricolored subdivision theorem} relates to our Theorem \ref{thm: Cut vertex}. We compare and contrast these theorems. In particular, we show that neither theorem implies the other, but acknowledge some overlap. We first show that Theorem  \ref{Tricolored subdivision theorem} does not imply Theorem \ref{thm: Cut vertex}. 
Revisiting Example \ref{Cyclic extensions of tricolored subdivision}, let $C_{\tau}$ be the partial cyclic order that is the union of the cyclic chains $[(1,2,3)],[(1,5,6)],[(3,4,5)]$. The set of cyclic extensions $\mathrm{Ext}(C_{\tau})$ contains the following cyclic extensions:

\vspace{0.25 cm}
\begin{center}
\begin{tabular}{ccccc}
   $[(1, 2, 3, 4, 5, 6)]$  & $[(1, 2, 4, 5, 3, 6)]$ & $[(1, 2, 4, 5, 6, 3)]$ & $[(1, 2, 5,3, 4, 6)]$ & $[(1, 2, 5, 3, 6, 4)]$ \\  $[(1, 2, 5, 6, 3, 4)]$ & 
      $[(1, 4, 2, 5, 3, 6)]$ & $[(1, 4, 2, 5, 6, 3)]$ & $[(1, 4, 5, 2, 3, 6)]$ &
      $[(1, 4, 5, 2, 6, 3)]$ \\  $[(1, 
  4, 5, 6, 2, 3)]$ & $[(1, 5, 2, 3, 4, 6)]$ &
  $[(1, 5, 2, 3, 6, 4)]$ & $[(1, 5, 2, 6,
   3, 4)]$ & $[(1, 5, 6, 2, 3, 4)]$
\end{tabular}
\end{center}
Then, by Theorem \ref{Tricolored subdivision theorem}, $\displaystyle \sum_{[w] \in \rm{Ext}(C_{\tau})} \mathrm{PT}(w)=0$. We now view the set of cyclic chains \[\{[(1,2,3)],[(1,5,6)],[(3,4,5)]\}\] 
as a set of toric chains (recall Definition \ref{toric chain definition}) and from this set, construct a toric poset $[Q]$. We draw one representative of $[Q]$ below.

\begin{figure}[H]
\centering
\begin{tikzpicture}
\node(1) at (0,0) {1};
\node(2) at (0,1) {2};
\node(3) at (0,2) {3};
\node(4) at (0,3) {4};
\node(5) at (0.75,1) {5};
\node(6) at (1.5,2) {6};
\draw[line width = 0.25 mm,->](1) -- (2);
\draw[line width = 0.25 mm,->](2) -- (3);
\draw[line width = 0.25 mm,->](3) -- (4);
\draw[line width = 0.25 mm,->](1) -- (5);
\draw[line width = 0.25 mm,->](5) -- (3);
\draw[line width = 0.25 mm,->](5) -- (4);
\draw[line width = 0.25 mm,->](5) -- (6);
\draw[line width = 0.25 mm,->](1) to [out=135,in=200]  (3);
\draw[line width = 0.25 mm,->](1) to [out=15,in=280] (6);
\end{tikzpicture}
\end{figure} 
Finding the set of toric total extensions of $[Q]$ using Proposition \ref{prop: Toric lin ext reformulations} part (ii), we have \[\torext = \mathrm{Ext}(C_{\tau}) \backslash \{[1,2,3,4,5,6]\}.\] Thus, we have that 
\begin{align*}
\psitor
&=- \mathrm{PT}([1,2,3,4,5,6])+\sum_{[w] \in \mathrm{Ext}(C_{\tau})} 
\mathrm{PT}(w)\\
&=- \mathrm{PT}([1,2,3,4,5,6])\\
&=\frac{1}{(x_1-x_2)(x_2-x_3)(x_3-x_4)(x_4-x_5)(x_1-x_6)(x_5-x_6)} \neq 0.
\end{align*}

Now, we show an example where both theorems agree. Consider the tricolored subdivision on the right in Figure \ref{fig:Tricolored subdivision} and recall the partial cyclic order $C_{\tau}$ that is the union of cyclic chains $[(1,5,4)], [(1,5,6)]$. By Theorem \ref{Tricolored subdivision theorem}, $\sum_{[w] \in \rm{Ext}(C_{\tau})} \mathrm{PT}(w)=0$. Considering the set of cyclic chains as a set of toric chains, we construct the following representative of a toric poset:

\begin{center}
\begin{tikzpicture} 
\node(1) at (0,0) {$1$};
\node(5) at (0,1) {$5$};
\node(4) at (-1,2) {$4$};
\node(6) at (1,2) {$6$};
\node(2) at (1.5,0) {$2$};
\node(3) at (3,0) {$3$};
\draw[line width = 0.25 mm, ->] (1) -- (5);
\draw[line width = 0.25 mm, ->] (5) -- (6);
\draw[line width = 0.25 mm, ->] (5) -- (4);
\draw[line width = 0.25 mm, ->] (1) -- (6);
\draw[line width = 0.25 mm, ->] (1) -- (4);
\end{tikzpicture}
\end{center}
By Theorem \ref{thm: Cut vertex}, for this toric poset $[Q]$, the rational function $\psitor$ is $0$.

We also note that there are toric posets that one can not construct using tricolored subdivisions. As an example, consider the following representative of a toric poset $[Q]$. 

\begin{center}
\begin{tikzpicture} 
\node(1) at (0,0) {$1$};
\node(2) at (-1,1) {$2$};
\node(3) at (1,1) {$3$};
\node(4) at (0,2) {$4$};
\node(5) at (-1,3) {$5$};
\node(6) at (1,3) {$6$};
\node(7) at (0,4) {$7$};
\draw[line width = 0.25 mm, ->] (1) -- (2);
\draw[line width = 0.25 mm, ->] (1) -- (3);
\draw[line width = 0.25 mm, ->] (2) -- (4);
\draw[line width = 0.25 mm, ->] (3) -- (4);
\draw[line width = 0.25 mm, ->] (4) -- (5);
\draw[line width = 0.25 mm, ->] (4) -- (6);
\draw[line width = 0.25 mm, ->] (5) -- (7);
\draw[line width = 0.25 mm, ->] (6) -- (7);
\end{tikzpicture}
\end{center}
This toric poset has a cut vertex, so by Theorem \ref{thm: Cut vertex}, we have that $\psitor$ is $0$.  Since this toric poset does not have any toric chains with at least three elements, there is no tricolored subdivision of $\mathbf{P}_7$ that we could use to acquire $[Q]$.

\section{Source-Sink Equivalence with a Fixed Source} \label{Sec: source-sink equivalence}

In order to find the set of toric total extensions $\torext$ for a toric poset $[Q]$, we can use Proposition \ref{prop: Toric lin ext reformulations} part (ii). However, this requires one to find all quivers $Q' \in [Q]$ that have vertex $1$ as a source and there is currently no good algorithm for finding this set of quivers. Therefore, we are motivated to find methods that are more computationally efficient to compute $\torext$.

We provide a recurrence for finding $\torext$ (see Theorem \ref{thm:unified-recursion-steps}) that is similar to the recurrence for finding the set of ordinary linear extensions of posets seen in Lemma \ref{lem: basic poset fact}. To prepare, in this section, we prove Theorem \ref{thm: Source-sink equiv with fixed source}, which will be crucial in the proof of Theorem \ref{thm:unified-recursion-steps}.
 
\vspace{2 mm}
\noindent {\bf Theorem \ref{thm: Source-sink equiv with fixed source}}
{\it Let $v$ be any vertex in an acyclic quiver $Q$, and let $Q_1,Q_2$ be any two acyclic quivers in the subset $[Q]_v$ of the source-sink flip-equivalence class $[Q]$, so $v$ is a source in both $Q_1$ and $Q_2$. 

Then there exists a source-sink flip sequence from $Q_1$ to $Q_2$ such that every intermediate quiver in the sequence also has $v$ as a source. In other words, the flip sequence does not flip at $v$, nor at neighbors of $v$.}
\vspace{1 mm}

To prove Theorem \ref{thm: Source-sink equiv with fixed source}, we first show in Lemma \ref{lem: Flip seq. avoiding v} that we can freeze a vertex in the sense that we can find a flip sequence between any two representatives of a toric poset which never flips at this vertex. We highlight that the sequence described in Lemma \ref{lem: Flip seq. avoiding v} may flip at neighbors of $v$, just not at $v$ itself. We then relate this to quivers in $[Q]_v$ by building a quiver which reduces all vertices torically comparable with $v$, including $v$ itself, to one vertex and then in Lemma \ref{Lem: phi is a bijection}, showing that the flip equivalence class of this quiver is in bijection with $[Q]_v$.

\begin{lemma} \label{Lem: flips of a lin ext} Let $Q' \in [Q]$, and take an ordinary linear extension $(a_1,a_2, \ldots, a_n)$ of $Q'$. Performing a sequence of flips at the elements $a_1,a_2, \ldots, a_n$ in that order is well-defined as a sequence of flips, and results in the original quiver $Q'$.
\end{lemma}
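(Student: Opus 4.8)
The plan is to induct on $n$, tracking what each flip does to the current quiver. First I would observe that since $(a_1, a_2, \ldots, a_n)$ is a linear extension of $Q'$, the vertex $a_1$ is a source of $Q' =: Q^{(0)}$: every edge incident to $a_1$ goes to some $a_j$ with $j > 1$, hence is directed out of $a_1$. So the flip at $a_1$ is legal, producing $Q^{(1)} := \mu_{a_1}(Q')$, in which $a_1$ has become a sink and all its edges point inward. The key structural claim to establish is that $(a_2, a_3, \ldots, a_n, a_1)$ is a linear extension of $Q^{(1)}$: for the pairs among $a_2, \ldots, a_n$ nothing changed, and $a_1$ now sits last, consistent with every edge into it. This lets me apply the inductive hypothesis to the induced quiver on $\{a_2, \ldots, a_n\}$ (which is unaffected by flipping $a_1$) with its linear extension $(a_2, \ldots, a_n)$: flipping in that order returns that induced quiver to its original state, and none of those flips touch the edges incident to $a_1$ in a way that matters — I would need to check that throughout the sub-sequence $a_1$ remains a sink, so that the edges incident to $a_1$ are never disturbed. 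Indeed, flipping a source or sink among $a_2, \ldots, a_n$ never re-routes an edge incident to $a_1$, because such an edge has $a_1$ as one endpoint and $a_1$ is not being flipped; and an edge gets reversed only when one of its endpoints is flipped. Hence after the sub-sequence we are at a quiver that agrees with $Q^{(1)}$ on all edges. Finally, flipping at $a_1$ once more — legal since $a_1$ is still a sink — reverses $\mu_{a_1}$ and returns us to $Q'$. This also shows each flip in the whole sequence is well-defined (i.e. performed at a vertex that is currently a source or sink), since at each stage the vertex being flipped is either the current minimal element $a_1$-type vertex (a source) or, in the recursive step, handled by the inductive hypothesis.

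The main obstacle, and the point that needs genuine care rather than hand-waving, is the bookkeeping that the flips at $a_2, \ldots, a_n$ genuinely do not interfere with the edges at $a_1$ and that at every intermediate stage the vertex to be flipped really is a source or sink. I would phrase this cleanly by noting the invariant: after flipping $a_1$ and then performing the first $i$ flips of the sub-sequence, the current quiver restricted to $\{a_2,\ldots,a_n\}$ equals the $i$-th quiver in the sub-induction, while every edge incident to $a_1$ remains directed into $a_1$. The second clause of the invariant is immediate because no flip in the sub-sequence occurs at $a_1$, and reversing an edge requires flipping one of its endpoints. With this invariant in hand, the legality of each flip and the final return to $Q'$ follow, completing the induction; the base case $n=1$ (a single vertex, flipped once, unchanged) is trivial.
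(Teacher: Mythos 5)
There is a genuine gap at the heart of the argument. You claim that ``flipping a source or sink among $a_2,\ldots,a_n$ never re-routes an edge incident to $a_1$, because such an edge has $a_1$ as one endpoint and $a_1$ is not being flipped,'' and you conclude that $a_1$ remains a sink throughout the sub-sequence. This is false: an edge $\{a_1,a_j\}$ has \emph{two} endpoints, and flipping at the other endpoint $a_j$ certainly reverses it. The principle you cite (``an edge gets reversed only when one of its endpoints is flipped'') actually implies the opposite of what you want, since every neighbor of $a_1$ lies in $\{a_2,\ldots,a_n\}$ and is flipped exactly once during the sub-sequence. Take $Q' = 1\to 2\to 3$ with linear extension $(1,2,3)$. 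After flipping $1$ one has $1\leftarrow 2\to 3$; after flipping $2$ one has $1\to 2\leftarrow 3$, and $1$ is already no longer a sink; after flipping $3$ one is back at $Q'$. So after the sub-sequence $a_2,\ldots,a_n$ you are \emph{already} at $Q'$ (in fact each edge incident to $a_1$ has been reversed a second time, restoring it, so $a_1$ is a source again, not a sink), and your final ``flip at $a_1$ once more'' is both illegal-as-described (you asserted $a_1$ is a sink, which it isn't) and, more importantly, an extra flip that is not part of the sequence $a_1,\ldots,a_n$ the lemma is about: your sequence has $n+1$ flips, the lemma's has $n$.

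Your inductive skeleton could be repaired by replacing the faulty invariant with the correct bookkeeping -- namely, tracking that at the moment you flip $a_i$, each edge $\{a_1,a_j\}$ with $j<i$ has been reversed twice (by the flips at $a_1$ and at $a_j$) while each with $j\ge i$ has been reversed once (by the flip at $a_1$), so $a_i$ is still a source in the full quiver -- and then observing that after all $n$ flips every edge incident to $a_1$ has been reversed exactly twice, so no further flip is needed. But that repaired argument is essentially just a verbose version of the paper's one-line observation: in the sequence $a_1,\ldots,a_n$, each edge $\{a_i,a_j\}$ is reversed exactly once for each of its two endpoints, hence twice in total, hence returns to its original orientation; together with the fact that after the first $i-1$ flips the rotated tuple $(a_i,\ldots,a_n,a_1,\ldots,a_{i-1})$ is a linear extension of the current quiver (so $a_i$ is a source and the next flip is legal), this gives both well-definedness and the identity in one stroke.
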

\begin{proof}
We note that the process of flipping at vertices in this way is well-defined. Since $a_1$ is a minimal element of the linear extension, $a_1$ is a source in $Q'$, so we can flip at $a_1$. The resulting quiver will have $(a_2, \ldots, a_k, a_1)$ as a linear extension and $a_2$ as a source, so we can flip at $a_2$. Proceeding in this way, after flipping at the first $i-1$ vertices, we reach a quiver where we can flip at $a_i$. Flipping at all vertices once causes the orientation of each edge to change twice, resulting in the same quiver we started with, $Q'$.
\end{proof}

\begin{lemma} \label{lem: Flip seq. avoiding v} Let $Q',Q'' \in [Q]$ and let $v$ be a vertex in the underlying graph of $[Q]$. One can find a flip sequence from $Q'$ to $Q''$ that avoids flipping at $v$ (note we are allowed to flip at neighbors of v).
\end{lemma}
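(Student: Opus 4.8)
The plan is to start from \emph{any} flip sequence joining $Q'$ to $Q''$ (one exists because $Q',Q''\in[Q]$), and then edit it so that no flip occurs at $v$, by replacing each offending flip with a detour through the remaining vertices.

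The single new idea needed is a ``complement flip'' observation extracted from Lemma~\ref{lem: flips of a lin ext}: a flip at $v$ can be simulated by a sequence of flips at all of the other vertices. Concretely, suppose $Q^{*}\in[Q]$ has $v$ as a sink. Then $v$ is a maximal element of the induced poset, so we may choose a linear extension of $Q^{*}$ of the form $(a_1,\dots,a_{n-1},v)$. By Lemma~\ref{lem: flips of a lin ext}, flipping successively at $a_1,\dots,a_{n-1},v$ is well-defined and returns to $Q^{*}$; hence after just the first $n-1$ flips (at $a_1,\dots,a_{n-1}$, none equal to $v$) we land at a quiver $R$ with $\mu_v(R)=Q^{*}$. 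Since flips are involutions, $R=\mu_v(Q^{*})$, so the single flip $Q^{*}\to\mu_v(Q^{*})$ can be replaced by the $v$-avoiding flip sequence at $a_1,\dots,a_{n-1}$. If instead $v$ is a \emph{source} in $Q^{*}$, then $v$ is a sink in $\mu_v(Q^{*})$, so the case just handled gives a $v$-avoiding flip sequence from $\mu_v(Q^{*})$ to $Q^{*}$; reversing it (flips being involutions) yields a $v$-avoiding flip sequence from $Q^{*}$ to $\mu_v(Q^{*})$.

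With this in hand I would walk along the original flip sequence from $Q'$ to $Q''$ and, at every step of the form $Q^{*}\xrightarrow{\,v\,}\mu_v(Q^{*})$, splice in the appropriate $v$-avoiding detour constructed above. Each detour has the same two endpoints as the flip it replaces, so all later steps of the sequence remain valid; the edited sequence still runs from $Q'$ to $Q''$ and never flips at $v$. (It may, of course, flip at neighbors of $v$, which the statement permits.) This completes the argument.

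I do not anticipate a real obstacle: once the complement-flip observation is in place everything is bookkeeping. The only point requiring care is that the detour flips are legitimate flips (each flipped vertex is a source or a sink at the moment it is flipped), and this is precisely what Lemma~\ref{lem: flips of a lin ext} guarantees; a secondary minor point is keeping track of whether $v$ starts the spliced step as a source or as a sink, which is why the argument splits into the two symmetric cases above.
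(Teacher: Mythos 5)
Your proposal is correct and takes essentially the same route as the paper: both arguments start from an arbitrary flip sequence joining $Q'$ to $Q''$ and replace each flip at $v$ by the sequence of flips at the other $n-1$ vertices in linear-extension order, which by Lemma \ref{Lem: flips of a lin ext} realizes the flip at $v$ without touching $v$. The only cosmetic difference is that the paper puts $v$ first in the linear extension and flips in reverse order (sink-to-source), while you put $v$ last and flip forward (source-to-sink).
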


\begin{proof} Consider a flip sequence between $Q'$ and $Q''$. Assume at the $k^{th}$ step, vertex $v$ is flipped. Without loss of generality, assume at step $k-1$, vertex $v$ is a source in the quiver we will call $Q_{k-1}$ and at step $k$, the vertex $v$ is a sink in the quiver $Q_k$. Consider a linear extension of $Q_{k-1}$ where $v$ is first. Similar to Lemma \ref{Lem: flips of a lin ext}, we can follow the linear extension in reverse order, flipping all vertices except $v$. Thus, we've reached $Q_k$ without flipping at $v$.
\end{proof}

\begin{definition} \label{Def: definition of G^v} \rm
Consider a toric poset $[Q]$ and a quiver $Q_1 \in [Q]$. Let $G=(V,E)$ be the underlying graph of $[Q]$, and let $v \in V$.
We define a new graph $G^v = (V^v,E^v)$, which we call the \textit{$v$-incomparability graph of G}. For each $w \in V$ that is torically incomparable with $v$, we include a corresponding vertex $w^v \in V^v$. The vertex set $V^v$ consists of all such $w^v$ along with another vertex $v^*$. Roughly speaking, vertices in $V$ that are torically comparable with $v$ merge into a new vertex $v^*$ in $G^v$ and vertices that are torically incomparable with $v$ have a copy in $G^v$.

Before defining the edge set, we first define the distance between two vertices $a$ and $b$ in a quiver to be the length of a shortest path between them in the underlying undirected graph. The edge set $E^v$ of graph $G^v$ can be described as
\[
E^v = \{\{u^v,w^v\}: v^* \neq u^v,w^v \in V^v \text{ and } \{u,w\} \in E\} ~ \bigcup ~ \{\{v^*,w^v\} : \text{$w$ is distance 2 from $v$ in $\overline{Q_1}$\}.}
 \] 
Recall that the toric transitive closure $\overline{[Q]}$ of a toric poset $[Q]$ does not depend on the choice of representative in $[Q]$. Therefore, the choice of $Q_1 \in [Q]$ does not affect the set $E^v$.
\end{definition}

\begin{example} \rm We consider a representative $Q_1$ of a toric poset $[Q]$. Here $v:=1$. From Definition \ref{Def: definition of G^v}, vertices $1,2,3$ merge into $1^*$, since they are torically comparable with vertex $1$. Since $2$ merges into $1^*$ and there is an edge $\{2,4\}$ in the underlying graph of $[Q]$, we have an edge $\{1^*,4^1\}$ in $G^1$ and similarly for $\{1^*,5^1\}$. Edges $\{5^1,6^1\}$ and $\{4^1,6^1\}$ are copies of the edges in the underlying graph of $[Q]$.
\begin{center}
\begin{tabular}{cc}
\begin{tikzpicture}
\node(1) at (0,0) {$1$};
\node(2) at (-0.5,0.75) {$2$};
\node(3) at (0,1.5) {$3$};
\node(4) at (-1,2.25) {$4$};
\node(5) at (0,3) {$5$};
\node(6) at (0,3.75) {$6$};
\node(7) at (0,-0.5) {$Q_1$};
\draw[line width = 0.25 mm, ->] (1) -- (2);
\draw[line width = 0.25 mm, ->] (2) -- (3);
\draw[line width = 0.25 mm, ->] (2) -- (4);
\draw[line width = 0.25 mm, ->] (4) -- (6);
\draw[line width = 0.25 mm, ->] (3) -- (5);
\draw[line width = 0.25 mm, ->] (5) -- (6);
\draw[line width = 0.25 mm, ->] (1) -- (3);
\end{tikzpicture} \hspace{2 cm}   &  \begin{tikzpicture}
\node(1) at (0,0) {$1^*$};
\node(2) at (0.5,1) {$4^1$};
\node(3) at (-0.5,1) {$5^1$};
\node(4) at (0,2) {$6^1$};
\node(5) at (0,-0.5) {$G^1$};
\draw[line width = 0.25 mm, -] (1) -- (2);
\draw[line width = 0.25 mm, -] (1) -- (3);
\draw[line width = 0.25 mm, -] (2) -- (4);
\draw[line width = 0.25 mm, -] (3) -- (4);
\end{tikzpicture} \\
\end{tabular}
\end{center} 
\end{example}
We now define a map:
 \[
 \Phi: [Q]_v \rightarrow \{\text{Orientations of $G^v$}\}.
 \]
For $Q' \in [Q]_v$, we define $\Phi(Q')$ below.

 \begin{enumerate}
     \item[i.] For $\{u^v,w^v\} \in E^v$ where $u^v,w^v \neq v^*$, if edge $\{u,w\} \in Q'$ is directed $u \rightarrow w$, direct $\{u^v,w^v\}$ as $u^v \rightarrow w^v$ in $\Phi(Q')$. Otherwise, direct $\{u^v,w^v\}$ as $w^v \rightarrow u^v$.
     \end{enumerate}
     
     \begin{enumerate}
     \item[ii.] If $\{v^*,u^v\} \in E^v$, then $v$ is distance 2 from $u$ in $\overline{Q'}$. Therefore, there exists a $w \in V$ such that there is an undirected path $(v,w,u)$ in $\overline{Q'}$. If the edge $\{w,u\}$ is directed $w \rightarrow u$ in $\overline{Q'}$, direct the edge $\{v^*,u^v\}$ as $v^* \rightarrow u^v$ in $\Phi(Q')$. Otherwise, direct $\{v^*,u^v\}$ as $u^v \rightarrow v^*$.
 \end{enumerate}
 In Example \ref{ex: example of phi}, we show three different acyclic quivers with $v:=1$ and show their images under $\Phi$.

There are four issues in the definition of $\Phi$ that we will address in succession:
\begin{itemize}
    \item The {\it choice of $w$} used in
    defining the direction  $u^v \rightarrow v^*$ will be immaterial (Proposition \ref{prop:orientation well defined}).
    \item The orientation $\Phi(Q')$ of $G^v$ really is {\it acyclic} (Proposition \ref{prop:no-directed-cycles}).
    \item The image of $[Q]_v$ under $\Phi$ lies {\it within a single $\equiv$-equivalence class} of acyclic orientations of $G^v$ (Lemma \ref{lem: orientations of G^v are flip equiv.}), and
    \item in fact, this image is an {\it entire} $\equiv$-equivalence class (Lemma \ref{Lem: phi is a bijection}).
\end{itemize}

 Given $u^v \in V^v$ such that $\{v^*,u^v\} \in E^v$, it is possible that there are multiple $w \in V$ such that there is an undirected path $(v,w,u)$ in $\overline{Q'}$. We show that the choice of $w$ used in condition (ii) will not affect the orientation that $\Phi$ imposes on the edge $\{v^*,u^v\}$.

 \begin{prop} \label{prop:orientation well defined}
The orientation assigned to $\{v^*,u^v\}$ is well defined.
\end{prop}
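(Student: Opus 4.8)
The plan is to show that for a fixed $Q' \in [Q]_v$ and a fixed vertex $u^v$ adjacent to $v^*$ in $G^v$, any two choices of intermediate vertex $w$ give the same orientation to the edge $\{v^*, u^v\}$. Suppose $w_1$ and $w_2$ are both vertices of $V$ with undirected paths $(v, w_1, u)$ and $(v, w_2, u)$ in $\overline{Q'}$; that is, $\{v, w_i\}$ and $\{w_i, u\}$ are edges of $\overline{Q'}$ for $i = 1, 2$. Since $v$ is a source in $Q'$, and hence in $\overline{Q'}$ (adding toric transitive edges incident to a source keeps it a source, as every such edge points away from $v$), both edges $\{v, w_1\}$ and $\{v, w_2\}$ are directed out of $v$, so $v \to w_1$ and $v \to w_2$ in $\overline{Q'}$. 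The orientation rule for $\{v^*, u^v\}$ depends only on the direction of $\{w_i, u\}$: we set $v^* \to u^v$ if $w_i \to u$, and $u^v \to v^*$ if $u \to w_i$. So it suffices to show that the direction of $\{w_1, u\}$ and the direction of $\{w_2, u\}$ in $\overline{Q'}$ are "consistent," i.e. either both point toward $u$ or both point away from $u$.

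The key observation is that $v \to w_i \to u$ or $v \to w_i \leftarrow u$ forces comparability relations with $v$. First I would note that since $u^v \ne v^*$, the vertex $u$ is torically incomparable with $v$, so there is no toric directed path through $v$ and $u$; in particular the edge $\{v,u\}$ is not present in $\overline{Q'}$ (otherwise $v, u$ would lie on a toric chain). Now suppose for contradiction that $w_1 \to u$ while $u \to w_2$ in $\overline{Q'}$. Then $v \to w_1 \to u \to w_2$ is a directed path in $\overline{Q'}$ from $v$ to $w_2$; but $v \to w_2$ is also an edge, so this is fine for acyclicity — however, it shows $w_2$ is reachable from $v$ via a path of length $3$ as well as length $1$. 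That alone is not a contradiction. The real point is different: I would instead argue via toric directed paths in $Q'$ itself. Since $\overline{Q'} = \overline{Q_1}$'s class and the edge $\{v^*,u^v\}$ in $E^v$ was defined using distance $2$ in $\overline{Q_1}$, the neighbors $w$ of $v$ with $\{w,u\} \in \overline{Q'}$ are precisely the "witnesses" of a length-$2$ toric directed path $v, w, u$ (when $w \to u$) — so actually the definition of $E^v$ together with Definition~\ref{toric chain definition} should be unpacked to see that $w \to u$ in $\overline{Q'}$ holds for \emph{every} such common neighbor $w$, by a toric-transitivity argument: if $v \to w$ and $v, w, u$ lie along a toric directed path then $u$ must come after $w$ on that path, forcing $w \to u$.

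Concretely, the main step is: if $\{v^*, u^v\} \in E^v$ then for every $w$ with $(v,w,u)$ an undirected path in $\overline{Q'}$, one has $w \to u$ in $\overline{Q'}$ (so the rule unambiguously assigns $v^* \to u^v$), OR for every such $w$ one has $u \to w$. I would prove this by ruling out the mixed case: assume $w_1 \to u$ and $u \to w_2$. Then in $\overline{Q'}$, we have $v \to w_1 \to u$, a toric directed path of length $2$ from $v$ to $u$ (using that $v$ is a source, the edge $v\to u$ need not be present for this to be a \emph{toric} directed path only if the length exceeds one — here length is exactly $2$, so the defining arc $v \to u$ of the toric directed path \emph{must} be present), contradicting that $v$ and $u$ are torically incomparable. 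Hence the mixed case cannot occur, and the orientation is well defined.

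The main obstacle I anticipate is the precise bookkeeping around "toric directed path" versus "directed path in $\overline{Q'}$": a toric directed path of length $k \ge 2$ requires \emph{both} the consecutive arcs and the long closing arc $x_1 \to x_k$, whereas an ordinary directed path in $\overline{Q'}$ need not close up. So I must be careful that a directed $2$-path $v \to w \to u$ in the acyclic quiver $\overline{Q'}$ does in fact give rise to the arc $v \to u$ in $\overline{Q'}$ — this is exactly ordinary transitivity of $\overline{Q'}$ as the transitive closure of $Q'$ (so $v \to u$ is an arc of $\overline{Q'}$), and that arc together with $v \to w \to u$ witnesses a toric directed path on $\{v, w, u\}$, putting $v$ and $u$ on a common toric chain — contradicting $u^v \ne v^*$. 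Once this chain of implications is set up cleanly using Definition~\ref{toric chain definition} and the fact that $v$ is a source, the proposition follows.
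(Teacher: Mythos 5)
Your first line of attack—the directed path $v \to w_1 \to u \to w_2$—is the right one, but you abandon it and pivot to an argument that contains a genuine error. You write that $\overline{Q'}$ is ``the transitive closure of $Q'$'' and so a directed $2$-path $v \to w \to u$ forces an arc $v \to u$ by ordinary transitivity. That is not what the toric transitive closure does. The toric transitive closure adds an edge $\{i,j\}$ only when $i$ and $j$ lie together on a \emph{toric chain}, which is a strictly stronger requirement than $j$ being reachable from $i$: the quiver $1 \to 2 \to 3$ has $\overline{Q'} = Q'$, with no arc $1 \to 3$, because $\{1,2,3\}$ is not a toric chain (flipping at $1$ makes $1$ and $3$ incomparable). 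Consequently, the directed $2$-path $v \to w_1 \to u$ does not by itself put $v$ and $u$ on a toric chain, and your contradiction does not follow.

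The repair is exactly the fact your first attempt discarded. In the mixed case $w_1 \to u$ and $u \to w_2$, you also know $v \to w_2$ (since $v$ is a source and $\{v,w_2\}$ is an edge of $\overline{Q'}$—your observation that $v$ stays a source in $\overline{Q'}$ is correct and useful here). Thus $v \to w_1 \to u \to w_2$ together with the closing arc $v \to w_2$ satisfies the definition of a toric directed path on the four vertices $\{v, w_1, u, w_2\}$: consecutive arcs plus the one long arc from the first to the last vertex. This makes $\{v, w_1, u, w_2\}$ a toric chain, so $\{v,u\}$ is an edge of $\overline{Q'}$, contradicting that $v$ and $u$ are at distance $2$. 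This is the paper's argument; note that it needs the four-vertex toric directed path precisely because the three-vertex one requires the arc $v \to u$ you are trying to rule out.
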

\begin{proof} Suppose that in $\overline{Q'}$, there exist two undirected paths between $v$ and $u$: $(v,w_1,u)$ and $(v,w_2,u)$, where $w_1,w_2 \in V$. Since $v$ is a source, edges $\{v,w_1\}, \{v,w_2\}$ are directed $v \rightarrow w_1$ and $v \rightarrow w_2$. Suppose for contradiction the edge $\{w_1,u\}$ has orientation $w_1 \rightarrow u$ and edge $\{w_2,u\}$ has orientation $u \rightarrow w_2$. Then, $\overline{Q'}$ contains the following toric chain.

\begin{center}
\begin{tikzpicture}[scale = 0.8]
\node(1) at (0,0) {$v$};
\node(2) at (1,1) {$w_1$};
\node(3) at (1,2) {$u$};
\node(4) at (0,3) {$w_2$};
\draw[line width = 0.25 mm, ->] (1) -- (2);
\draw[line width = 0.25 mm, ->] (2) -- (3);
\draw[line width = 0.25 mm, ->] (3) -- (4);
\draw[line width = 0.25 mm, ->] (1) -- (4);
\draw[line width = 0.25 mm, dotted, ->] (1) -- (3);
\draw[line width = 0.25 mm, dotted, ->] (2) -- (4);
\end{tikzpicture}
\end{center}
Thus, edge $\{v,u\}$ exists in $\overline{Q'}$, which contradicts that $v$ and $u$ are distance 2 away in $\overline{Q'}$.
\end{proof}

\begin{example} \label{ex: example of phi} \rm We give a few representatives of toric posets and show their images under $\Phi$.
\begin{center}
\begin{tabular}{cccccc}
\begin{tikzpicture}
\node(1) at (0,0) {$1$};
\node(2) at (-1,1) {$2$};
\node(3) at (1,1) {$3$};
\node(4) at (-1,2) {$4$};
\node(5) at (0,3) {$5$};
\node(6) at (0,-0.5) {$Q_1$};
\draw[line width = 0.25 mm, ->] (1) -- (2);
\draw[line width = 0.25 mm, ->] (1) -- (3);
\draw[line width = 0.25 mm, ->] (2) -- (4);
\draw[line width = 0.25 mm, ->] (4) -- (5);
\draw[line width = 0.25 mm, ->] (3) to (5);
\end{tikzpicture} & 

\begin{tikzpicture}
\node(1) at (0,0) {$1^*$};
\node(4) at (-0.5,1) {$4^1$};
\node(5) at (0.5,2) {$5^1$};
\node(6) at (0,-0.5) {$\Phi(Q_1)$};
\draw[line width = 0.25 mm, ->] (1) -- (4);
\draw[line width = 0.25 mm, ->] (1) -- (5);
\draw[line width = 0.25 mm, ->] (4) -- (5);
\end{tikzpicture} \hspace{1 cm} &

\begin{tikzpicture}
\node(1) at (0,0) {$1$};
\node(2) at (-1,1) {$2$};
\node(4) at (-1,2) {$4$};
\node(3) at (1,1) {$3$};
\node(5) at (0,3) {$5$};
\node(6) at (0,-0.5) {$Q_2$};

\draw[line width = 0.25 mm, ->] (1) -- (2);
\draw[line width = 0.25 mm, ->] (1) -- (3);
\draw[line width = 0.25 mm, ->] (2) -- (4);
\draw[line width = 0.25 mm, ->] (4) -- (5);
\draw[line width = 0.25 mm, ->] (3) -- (5);
\draw[line width = 0.25 mm, ->] (1) -- (5);
\draw[line width = 0.25 mm, ->] (1) -- (4);
\end{tikzpicture}  &  

\begin{tikzpicture}
\node(1) at (0,0) {$1^*$};
\node(2) at (0,-0.5) {$\Phi(Q_2)$};
\end{tikzpicture} &

\begin{tikzpicture}
\node(1) at (0,0) {$1$};
\node(2) at (-0.5,0.75) {$2$};
\node(3) at (0,1.5) {$3$};
\node(4) at (-1,2.25) {$4$};
\node(5) at (0,3) {$5$};
\node(6) at (0,3.75) {$6$};
\node(7) at (0,-0.5) {$Q_3$};
\draw[line width = 0.25 mm, ->] (1) -- (2);
\draw[line width = 0.25 mm, ->] (2) -- (3);
\draw[line width = 0.25 mm, ->] (2) -- (4);
\draw[line width = 0.25 mm, ->] (4) -- (6);
\draw[line width = 0.25 mm, ->] (3) -- (5);
\draw[line width = 0.25 mm, ->] (5) -- (6);
\draw[line width = 0.25 mm, ->] (1) -- (3);
\end{tikzpicture} & 

\begin{tikzpicture}
\node(1) at (0,0) {$1^*$};
\node(2) at (0.5,1) {$4^1$};
\node(3) at (-0.5,1) {$5^1$};
\node(4) at (0,2) {$6^1$};
\node(5) at (0,-0.5) {$\Phi(Q_3)$};
\draw[line width = 0.25 mm, ->] (1) -- (2);
\draw[line width = 0.25 mm, ->] (1) -- (3);
\draw[line width = 0.25 mm, ->] (2) -- (4);
\draw[line width = 0.25 mm, ->] (3) -- (4);
\end{tikzpicture} \hspace{1 cm}
    
\end{tabular}
\end{center} 
\end{example}

\begin{prop}
\label{prop:no-directed-cycles}
The map $\Phi$ does not create any directed cycles.
\end{prop}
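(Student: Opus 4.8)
The plan is to argue by contradiction: assume $\Phi(Q')$ contains a directed cycle $C$ for some $Q'\in[Q]_v$, and split on whether $C$ passes through the special vertex $v^*$. If $v^*\notin C$, then by rule (i) in the definition of $\Phi$ every arc $u^v\to w^v$ of $C$ comes from an arc $u\to w$ of $Q'$, so $C$ pulls back to a directed cycle in $Q'$, contradicting acyclicity of $Q'$. Thus the only case requiring work is $v^*\in C$.

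In that case I would write $C=(v^*,u_1^v,u_2^v,\ldots,u_k^v)$ with arcs $v^*\to u_1^v$, $u_i^v\to u_{i+1}^v$ for $1\le i\le k-1$, and $u_k^v\to v^*$ (note $k\ge 2$ since $G^v$ is simple and $\Phi(Q')$ is an orientation of it). I would first record two facts: the edges of $\overline{Q'}$ are exactly the torically comparable pairs of $[Q]$, and $v$ is a source not only of $Q'$ but also of $\overline{Q'}$, since no toric directed path ends at a source of $Q'$. Then I would unwind the arcs of $C$ using rules (i)--(ii) of $\Phi$: the arc $v^*\to u_1^v$ produces a vertex $w_0$ with $v\to w_0\to u_1$ in $\overline{Q'}$; each $u_i^v\to u_{i+1}^v$ produces $u_i\to u_{i+1}$ in $Q'$ (hence in $\overline{Q'}$); and $u_k^v\to v^*$ produces a vertex $w_{k+1}$ with $u_k\to w_{k+1}$, and also $v\to w_{k+1}$ in $\overline{Q'}$ because $v$ is a source. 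Since $v\to w_{k+1}$ is then an arc of $\overline{Q'}$, and the orientation of an edge of $\overline{Q'}$ records an honest toric directed path in $Q'$, there is a toric directed path from $v$ to $w_{k+1}$ in $Q'$; in particular the arc $v\to w_{k+1}$ lies in $Q'$ itself.

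To finish, I would replace each arc of the walk $v\to w_0\to u_1\to\cdots\to u_k\to w_{k+1}$ of $\overline{Q'}$ by a directed path of $Q'$ (each arc of $\overline{Q'}$ is realized by a toric directed path, hence a directed path, of $Q'$), concatenate these, and observe that the resulting directed walk $W$ of $Q'$ has no repeated vertex --- otherwise $Q'$ would contain a directed cycle. So $W$ is a directed path $v=z_0\to z_1\to\cdots\to z_\ell=w_{k+1}$ with $\ell\ge 2$ that threads through $u_1$; since $u_1$ is torically incomparable with $v$ while $w_{k+1}$ is torically comparable with $v$, we get $u_1=z_t$ for some $1\le t\le \ell-1$. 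Now $Q'$ contains $z_0\to z_1,\ldots,z_{\ell-1}\to z_\ell$ together with the closing arc $z_0\to z_\ell$, so $z_0,\ldots,z_\ell$ form a toric directed path in $Q'$, and by \cite[Prop.~6.3]{toric} the set $\{z_0,\ldots,z_\ell\}$ is a toric chain of $[Q]$ containing both $v$ and $u_1$. Hence $v$ and $u_1$ are torically comparable, contradicting $u_1^v\ne v^*$.

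The routine parts are the case $v^*\notin C$ and the pull-back/lifting bookkeeping. The step I expect to be the main obstacle is producing the closing arc $v\to w_{k+1}$ \emph{in $Q'$} (not merely in $\overline{Q'}$) and arranging that the lifted walk $W$ genuinely passes through $u_1$, so that the resulting toric directed path certifies torical comparability of $v$ and $u_1$; this is exactly where one must use that the direction of each edge of $\overline{Q'}$ encodes an actual toric directed path of $Q'$ and that $v$ remains a source after passing to the toric transitive closure.
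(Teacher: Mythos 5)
Your Case~1 matches the paper exactly, and the basic setup of Case~2 — extracting $w_0$ and $w_{k+1}$ from the two arcs incident to $v^*$, using that $v$ stays a source in $\overline{Q'}$, and aiming to exhibit a toric chain containing both $v$ and some $u_i$ — is the same as the paper's. Where you diverge is in insisting on pulling the witness back from $\overline{Q'}$ into $Q'$, and that detour is where the gap sits.

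The problematic step is the inference \emph{``there is a toric directed path from $v$ to $w_{k+1}$ in $Q'$; in particular the arc $v\to w_{k+1}$ lies in $Q'$ itself.''} The arc $v\to w_{k+1}$ lives in $\overline{Q'}$, and an arc of the toric transitive closure need not be present in $Q'$ — that is precisely what the closure adds. Concretely, if $v$ and $w_{k+1}$ both sit on a toric directed path $(x_1,\dots,x_m)$ of $Q'$ with $v=x_1$ (forced, since $v$ is a source) and $w_{k+1}=x_a$ for some $2<a<m$, then $\overline{Q'}$ contains the arc $v\to w_{k+1}$ because $v$ precedes $w_{k+1}$ along a toric directed path, but $Q'$ has neither the consecutive arc nor the closing arc between them. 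Your own later reasoning needs the closing arc $z_0\to z_\ell$ to be an honest arc of $Q'$ so that $z_0,\dots,z_\ell$ is a toric directed path of $Q'$, and that is exactly what fails here. The paper sidesteps this entirely: it never pulls back to $Q'$. The directed path $v\to w\to u_1\to\cdots\to u_k\to x$ together with the arc $v\to x$ already sits inside $\overline{Q'}$, so those vertices form a toric directed path \emph{of $\overline{Q'}$}; since $[Q]$ and $\overline{[Q]}$ are the same toric chamber, this is a toric chain of $[Q]$ containing $v$ and each $u_i$, contradiction. Your argument is fixable by replacing the pull-back with this observation (you already have all the $\overline{Q'}$-arcs in hand), but as written the claim that the closing arc drops down to $Q'$ is unjustified.
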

\begin{proof}
We show by contradiction that $\Phi(Q')$ does not contain any directed cycles. We consider two cases.

Case 1: Suppose that $\Phi(Q')$ contains a directed cycle that does not contain $v^*$. By definition of $\Phi$, the quiver $Q'$ must have contained a directed cycle, which contradicts that $Q'$ is an acyclic quiver.

Case 2: Suppose that $\Phi(Q')$ has a directed cycle that contains $v^*$, namely $(v^*,u^v_1,u^v_2,\ldots, u^v_k,v^*)$. Then, by definition of $E^v$, in $\overline{Q'}$ there exist vertices $w,x$ such that the edges $\{u_1,w\}$ and $\{u_k,x\}$ are directed as $w \rightarrow u_1$ and $u_k \rightarrow x$. In addition, edges $\{v,w\}$ and $\{v,x\}$ are directed $v \rightarrow w$ and $v \rightarrow x$, since $v$ is a source in $\overline{Q'}$. Then, in $\overline{Q'}$, we have the toric chain made up of directed paths $(v,w,u_1,\ldots, u_k,x)$ and $(v, x)$, implying that each $u_i$ is torically comparable with $v$. Thus, by definition of $V^v$, no such vertices $u^v_i$ would exist in $V^v$. Therefore, such a cycle in $\Phi(Q')$ cannot exist.
\end{proof}

Our next goal is to show that the image of $\Phi$ is a source-sink flip equivalence class of acyclic orientations of $G^v$. To show $\Phi(Q_1)$ and $\Phi(Q_2)$ are flip equivalent for $Q_1,Q_2 \in [Q]_v$, we first need terminology from Pretzel's work in \cite{pretzel} (also seen in \cite{pretzel2}). Let $Q$ be a quiver, and let $G$ be the underlying graph of $Q$. A \textit{walk} $W$ in $G$ is a sequence of vertices $v_1,v_2,\ldots, v_n$  such that $v_i$ is adjacent to $v_{i-1}$. The \textit{inverse walk} $-W$ is obtained by reversing the sequence $W$. If $v_1=v_n$, then the walk is called a \textit{circuit}. A circuit $C$ is \textit{trivial} if it is equal to traversing the walk $W$ and then the reverse walk $-W$. For a walk $W=(v_1,v_2,\ldots,v_k)$ in a quiver $Q$, an edge $\{v_{i-1},v_i\}$ is a \textit{forward edge} if it is directed towards $v_i$ in $Q$ and it is a \textit{backward edge} otherwise. Let $|C_Q^+|$ and $|C_Q^-|$ denote the number of forward edges and backward edges, respectively, of circuit $C$ in quiver $Q$. The flow-difference of $C$ in $Q$ is defined as
$d_Q(C):=|C_Q^+|-|C_Q^-|$. Two quivers $Q,Q'$ with the same underlying graph $G$ are said to have the same flow-difference if $d_Q(C)=d_{Q'}(C)$ for each circuit $C$ of $G$.

\begin{remark} \rm
The flow-difference of a circuit appears under other names in the literature. For instance, in \cite{Propp}, Propp refers to the flow-difference of a circuit as the circulation of the circuit and in \cite{toric}, the authors use Coleman's $\nu$-function on a circuit \cite{coleman}.
\end{remark}

Mosesian introduced the idea of pushing down maximal vertices in \cite{mosesian}. The operation of pushing down maximal vertices is the same as flipping sinks to sources. In 1984, Pretzel showed the following.
\begin{theorem} \emph{(\cite[Thm. 1']{pretzel})} \label{thm:Pretzel} Let $G$ be a finite simple graph. Two acyclic orientations of $G$ can be obtained from each other by pushing down if and only if they have the same flow-difference.
\end{theorem}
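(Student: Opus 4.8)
Here is the plan. I would prove the two implications separately, the forward one being short and the converse being the substance.

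\textbf{Forward direction.} Since pushing down a sink and its inverse (pushing up a source) generate the stated equivalence relation, it suffices to check that a single push-down preserves $d_{\cdot}(C)$ for every circuit $C$. Let $Q'$ arise from $Q$ by pushing down a sink $v$. Every edge of $C$ not incident to $v$ keeps its orientation, so it contributes identically to $d_Q(C)$ and $d_{Q'}(C)$. Each time the closed walk $C$ passes through $v$ it uses exactly two edges at $v$, traversed as $a \to v \to b$; in $Q$, where $v$ is a sink, one of these is a forward edge and the other backward, contributing $0$ to the flow-difference, and in $Q'$, where $v$ is a source, the roles just swap, again contributing $0$. Summing over all passages through $v$ gives $d_Q(C)=d_{Q'}(C)$, so the flow-difference is constant on each push-down equivalence class, which is the ``only if'' direction.

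\textbf{Converse: building a potential.} Suppose $Q_1,Q_2$ are acyclic orientations of $G$ with $d_{Q_1}(C)=d_{Q_2}(C)$ for all circuits $C$; I may assume $G$ is connected, the general statement following componentwise. To each edge $\{i,j\}$ of $G$ I assign an increment: $0$ if $Q_1$ and $Q_2$ agree on it, and, if they disagree with $Q_1$ orienting $i\to j$ and $Q_2$ orienting $j\to i$, the value $+1$ along the direction $i\to j$ (i.e.\ ``$+1$ toward the head of the $Q_1$-edge''). A direct sign check shows that for every circuit $C$ one has $d_{Q_1}(C)-d_{Q_2}(C)=2\sum_{e\in C}(\text{increment of }e)$, so the hypothesis forces this increment $1$-chain to have zero sum around every cycle. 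Choosing a spanning tree and extending, I obtain an integer function $\phi:V\to\Z$, unique up to an additive constant, whose difference across each edge equals the prescribed increment; thus $\phi(i)=\phi(j)$ when $Q_1,Q_2$ agree on $\{i,j\}$, while $\phi$ takes consecutive integer values on the two endpoints of a disagreement edge, being larger at the head of the $Q_1$-edge.

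\textbf{Converse: the induction.} I would induct on the nonnegative integer $\Pi:=\sum_{v\in V}\bigl(\phi(v)-\min_u\phi(u)\bigr)$. If $\Pi=0$ then $\phi$ is constant, every edge is an agreement edge, and $Q_1=Q_2$. Otherwise set $M=\max_v\phi(v)$ and $S=\{v:\phi(v)=M\}$. Edges inside $S$ join vertices with equal $\phi$, hence are agreement edges, so $Q_1|_S$ is acyclic and has a sink $v^\ast$. Every $S$-neighbor of $v^\ast$ sends its edge into $v^\ast$ in $Q_1$; every non-$S$-neighbor $u$ of $v^\ast$ gives a disagreement edge with $\phi(v^\ast)=M>\phi(u)$, so $Q_1$ again orients it $u\to v^\ast$. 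Thus $v^\ast$ is a sink of $Q_1$ in all of $G$, and pushing it down yields an acyclic orientation $Q_1'$ (a directed cycle in $Q_1'$ would avoid the new source $v^\ast$, hence be a cycle of $Q_1$). Checking edge by edge, the potential attached to the pair $(Q_1',Q_2)$ is exactly $\phi$ with its value at $v^\ast$ lowered by $1$: this is still a valid potential, $\max\phi$ does not increase, $\min\phi$ is unchanged, and $\Pi$ drops by $1$. By induction, finitely many push-downs carry $Q_1$ to $Q_2$, proving the ``if'' direction.

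\textbf{Main obstacle.} The delicate part is the converse. A greedy approach, pushing down a sink of $Q_1$ whose incident disagreement edges outnumber its agreement edges, fails: one sometimes must pass through orientations that are farther from $Q_2$ in Hamming distance. The point of extracting $\phi$ from the flow-difference hypothesis is that $\Pi=\sum_v\phi(v)$ becomes a strictly monotone potential \emph{and} identifies where to push next, namely a sink of $Q_1$ among the $\phi$-maximal vertices (existence of such a sink via the acyclic subquiver $Q_1|_S$ is the key local lemma). The routine-but-must-be-careful ingredients are the sign bookkeeping relating $d_{Q_1}(C)-d_{Q_2}(C)$ to the increment $1$-chain, and the verification that pushing down $v^\ast$ changes $\phi$ only at $v^\ast$ and only by one unit; this last identity is what makes the induction run. (The terminology of pushing down maximal vertices and flow-differences here is that of Mosesian~\cite{mosesian} and Pretzel~\cite{pretzel}.)
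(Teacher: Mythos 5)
The paper does not prove this statement at all: it is quoted verbatim from Pretzel \cite[Thm.\ 1']{pretzel} and used as a black box, so there is no internal proof to compare against. Your argument is a correct, self-contained proof. The forward direction is right: a passage $a,v,b$ of a circuit through a sink $v$ contributes one forward and one backward edge both before and after the flip, so each push-down preserves $d_{\cdot}(C)$, and the inverse operation then does too. For the converse, your sign identity $d_{Q_1}(C)-d_{Q_2}(C)=2\sum_{e\in C}(\text{increment of }e)$ checks out on each of the three edge types, so the increment $1$-chain has zero circulation and integrates to a potential $\phi$ on each connected component; the key local step --- that a sink $v^{\ast}$ of $Q_1$ restricted to the $\phi$-maximal set $S$ is in fact a sink of $Q_1$ in all of $G$, because every edge leaving $S$ is a disagreement edge oriented toward its $\phi$-larger endpoint --- is exactly what makes the induction on $\Pi$ run, and your edge-by-edge verification that the new potential is $\phi-\mathbf{1}_{v^{\ast}}$ (so $\Pi$ drops by one while $\min\phi$ is unchanged when $\Pi>0$) is correct. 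This potential-function argument is essentially the standard proof of Pretzel's theorem and is the same circle of ideas as Coleman's $\nu$-function and the toric-chamber picture that the paper invokes elsewhere, so it fits the paper's framework naturally; the only points worth spelling out in a polished write-up are the cyclic bookkeeping when the circuit's basepoint is the flipped vertex, and the remark that simulating a push-up by $|V|-1$ push-downs (via Lemma 6.2 of the paper) lets you state the equivalence purely in terms of pushing down.
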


\begin{lemma} \label{lem: orientations of G^v are flip equiv.} 
Let $[Q]$ be a toric poset, and let $Q_1,Q_2 \in [Q]_v$. Then, $\Phi(Q_1)$ is flip equivalent to $\Phi(Q_2)$.
\end{lemma}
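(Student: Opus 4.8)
The plan is to deduce the claim from Pretzel's theorem (Theorem~\ref{thm:Pretzel}). Since $\Phi(Q_1)$ and $\Phi(Q_2)$ are acyclic orientations of the finite simple graph $G^v$ (Proposition~\ref{prop:no-directed-cycles}), they are flip equivalent as soon as $d_{\Phi(Q_1)}(C)=d_{\Phi(Q_2)}(C)$ for every circuit $C$ of $G^v$, so the whole problem reduces to comparing flow-differences. I would establish this equality by lifting each circuit $C$ of $G^v$ to a circuit $\tilde C$ in the underlying graph of $\overline{[Q]}$ --- a single graph, independent of the chosen representative since $\overline{[Q]}$ is well defined --- chosen so that $d_{\overline{Q'}}(\tilde C)=d_{\Phi(Q')}(C)$ for \emph{every} $Q'\in[Q]_v$ at once. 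Granting this, the lemma is immediate: $\overline{Q_1}$ and $\overline{Q_2}$ are two representatives of $\overline{[Q]}$, hence flip equivalent, so Theorem~\ref{thm:Pretzel} gives $d_{\overline{Q_1}}(\tilde C)=d_{\overline{Q_2}}(\tilde C)$, whence $d_{\Phi(Q_1)}(C)=d_{\Phi(Q_2)}(C)$; as $C$ is arbitrary, a second application of Theorem~\ref{thm:Pretzel} yields $\Phi(Q_1)\equiv\Phi(Q_2)$.

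To construct $\tilde C$, I would fix, for each $u$ with $\{v^*,u^v\}\in E^v$, a vertex $w_u$ providing an undirected path $(v,w_u,u)$ in $\overline{[Q]}$ (possible since $u$ is distance $2$ from $v$). Walking along $C$: replace each edge $\{u^v,w^v\}$ with $u^v,w^v\neq v^*$ by the edge $\{u,w\}$, and replace each passage through $v^*$ --- entering along $\{u_1^v,v^*\}$ and leaving along $\{v^*,u_2^v\}$ --- by the length-$4$ walk $u_1,w_{u_1},v,w_{u_2},u_2$. The resulting $\tilde C$ is a circuit in the underlying graph of $\overline{[Q]}$, and, importantly, the \emph{same} $\tilde C$ serves for every $Q'\in[Q]_v$: all the edges it uses lie in the representative-independent underlying graph of $\overline{[Q]}$, and in each $\overline{Q'}$ the vertex $v$ remains a source (transitive closure cannot create an edge into a source), so each $(v,w_u,u)$ is still a genuine length-$2$ path and, by Proposition~\ref{prop:orientation well defined}, $w_u$ is still an admissible choice in computing the orientation $\Phi(Q')$ assigns to $\{v^*,u^v\}$.

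Finally, I would verify $d_{\overline{Q'}}(\tilde C)=d_{\Phi(Q')}(C)$ edge by edge. An edge $\{u^v,w^v\}$ of the first kind contributes identically on both sides, since $\Phi$ orients it exactly as $\overline{Q'}$ orients $\{u,w\}$ (transitive closure does not reverse existing edges). For a passage through $v^*$: as $v$ is a source, the two middle steps $w_{u_1}\to v$ (traversed against its orientation) and $v\to w_{u_2}$ (traversed with its orientation) together contribute $-1+1=0$ to $d_{\overline{Q'}}(\tilde C)$; and a short case split on the orientations of $\{w_{u_1},u_1\}$ and $\{w_{u_2},u_2\}$ shows that the contributions of the steps along $\{u_1,w_{u_1}\}$ and $\{w_{u_2},u_2\}$ match those of $\{u_1^v,v^*\}$ and $\{v^*,u_2^v\}$ in $\Phi(Q')$, by the very rule defining $\Phi$ at $v^*$. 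Summing over all edges of $C$ gives the identity. I expect this sign bookkeeping, together with the appeal to Proposition~\ref{prop:orientation well defined} to see that the $w_u$'s may be chosen uniformly across all of $[Q]_v$, to be the only delicate point; the conceptual weight is carried entirely by Pretzel's theorem and the representative-independence of $\overline{[Q]}$.
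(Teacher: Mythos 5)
Your proof is correct, and it runs on the same engine as the paper's: Pretzel's Theorem~\ref{thm:Pretzel} reduces the lemma to matching flow-differences, and the decisive cancellation comes from routing a circuit through the source $v$, whose two incident edges contribute $+1-1=0$. The difference is one of direction and of ambient graph. The paper pushes circuits \emph{down} from $G$ to $G^v$: it classifies circuits of $Q_1,Q_2$ by how they meet the vertices torically comparable to $v$, decomposes the mixed ones into alternating segments $S_i$ (incomparable) and $T_i$ (comparable), and pins down the forward-edge count of each $S_i$ by closing it into an auxiliary circuit $W_1,S_1,W_2$ through $v$ along toric directed paths. You instead lift each circuit $C$ of $G^v$ \emph{up} to a single, representative-independent circuit $\tilde C$ in the underlying graph of $\overline{[Q]}$, replacing each passage through $v^*$ by the length-four walk $u_1,w_{u_1},v,w_{u_2},u_2$, and verify $d_{\overline{Q'}}(\tilde C)=d_{\Phi(Q')}(C)$ edge by edge. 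Your direction makes explicit something the paper leaves implicit --- that \emph{every} circuit of $G^v$ is accounted for (the paper asserts that circuits of $\Phi(Q_i)$ ``must come from circuits in $Q_i$'' without exhibiting a preimage) --- and your uniform choice of the $w_u$'s, justified by Proposition~\ref{prop:orientation well defined} together with the representative-independence of $\overline{[Q]}$, isolates the one genuinely delicate point. The small extra facts you rely on --- that $v$ remains a source in $\overline{Q'}$ and that the toric transitive closure only adds edges without reversing existing ones --- both hold, for the reasons you indicate, so the argument is complete.
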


\begin{proof}
Using Theorem \ref{thm:Pretzel}, it suffices to show that $\Phi(Q_1)$ and $\Phi(Q_2)$ have the same flow-difference. Since the map $\Phi$ never creates any cycles, any circuits present in $\Phi(Q_1)$ and $\Phi(Q_2)$  must come from circuits in $Q_1$ and $Q_2$ respectively. There are three types of circuits in $Q_1$ and $Q_2$ to consider:
\begin{enumerate}[(i)]
\item Circuits that do not contain elements that are torically comparable to $v$.
\item Circuits that only contain elements that are torically comparable to $v$.
\item Circuits where a proper subset of the elements are torically comparable to $v$.
\end{enumerate}

In case (i), cycles that do not contain $v$ remain the same under $\Phi$. Since the flow-differences of $Q_1$ and $Q_2$ are equal in any such circuit, we immediately know that the flow-differences of $\Phi(Q_1)$ and $\Phi(Q_2)$ at such circuits are equal.

Circuits from case (ii) are deleted under $\Phi$, so they will not affect the flow-differences of $\Phi(Q_1)$ and $\Phi(Q_2)$.

We now consider case (iii), the most interesting case. Without loss of generality, let $C$ be a circuit in the underlying graph of $Q_1$ such that a nonempty proper subset of the vertices on $C$ are torically comparable to $v$ in $Q_1$. Such a circuit can be divided into walks $S_1,T_1,S_2,T_2,\ldots,S_m,T_m$ which appear consecutively where all vertices along $S_i$ are torically incomparable to $v$ and all vertices along $T_i$ are torically comparable to $v$. Under the map $\Phi$, all vertices on each $T_i$ are merged into $v^*$, edges that are completely contained in each $T_i$ are deleted, and each $S_i$ becomes a circuit that contains $v^*$.

We claim that in $Q_1$ and $Q_2$, the number of forward edges along each $S_i$ is fixed, which implies the number of backward edges is also fixed. Consider $S_1$ and let the first and last vertices be $w$ and $u$. By construction, $w$ is incident to $x$, a vertex torically comparable to $v$, and similarly $u$ is incident to $y$, a vertex that is torically comparable to $v$. Since $v$ is torically comparable to both $x$ and $y$, there exists a walk $W_1$ from $v$ to $x$ and a walk $W_2$ from $v$ to $y$. We note a circuit that is formed by these two walks and $S_1$: $W_1,S_1,W_2$. By Theorem \ref{thm:Pretzel}, this circuit has the same flow-difference in $Q_1$ and $Q_2$.  However, since $Q_1$ and $Q_2$ are in $[Q]_v$, the edges along the walks $W_1$ and $W_2$ must have all arrows directed away from $v$. Therefore, to preserve the flow-difference, the number of forward edges in $S_1$ must be the same in $Q_1$ and $Q_2$, implying that the flow-differences of the created circuit in $\Phi(Q_1)$ and $\Phi(Q_2)$ are equal.

We have now addressed all possible circuits in $\Phi(Q_1)$ and $\Phi(Q_2)$. Invoking Theorem \ref{thm:Pretzel} once more shows that $\Phi(Q_1)$ is flip equivalent to $\Phi(Q_2)$.
\end{proof}
 
 \begin{lemma} For vertex $v^*\neq k^v \in V^v$, the following diagram commutes
 \[
\begin{tikzcd}
{[Q]} \arrow[r,"\Phi"] \arrow[d,"\mu_k"] & {[\Phi(Q')]} \arrow[d,"\mu_{k^v}"]\\
{[Q]} \arrow[r,"\Phi"] & {[\Phi(Q')]}
\end{tikzcd}.
\]
 \end{lemma}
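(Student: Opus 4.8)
The plan is to unwind what commutativity of the square asserts: fixing a representative $Q' \in [Q]_v$ in which $k$ is a source or a sink (so that $\mu_k$ is defined on it), I must show (1) $\mu_k(Q')$ again lies in $[Q]_v$, so that $\Phi(\mu_k(Q'))$ makes sense; (2) $k^v$ is correspondingly a source or a sink of $\Phi(Q')$, so that $\mu_{k^v}(\Phi(Q'))$ makes sense; and (3) $\Phi(\mu_k(Q')) = \mu_{k^v}(\Phi(Q'))$ as acyclic orientations of $G^v$. Since both sides of (3) are orientations of the \emph{same} graph $G^v$, it suffices to check that they agree on each edge, and I would partition $E^v$ into four cases: (a) edges $\{u^v,w^v\}$ with $u^v,w^v \notin \{v^*,k^v\}$; (b) edges $\{v^*,u^v\}$ with $u\neq k$; (c) type-(i) edges $\{k^v,w^v\}$; and (d) the edge $\{v^*,k^v\}$, if present.

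First I would record two preliminary observations. Because $k$ is torically incomparable with $v$, the pair $\{k,v\}$ is \emph{not} an edge of $G$ (an edge would place $k,v$ on a toric directed path of length one, hence on a common toric chain); consequently $\mu_k$ reverses no arc incident to $v$, so $v$ remains a source and $\mu_k(Q')\in[Q]_v$, settling (1). Second — the technical core — I would prove that a source or sink vertex is never an \emph{internal} vertex of any toric directed path: if $k$ were internal to $x_1\to\cdots\to k\to\cdots\to x_m$, then $k$ would have both an incoming and an outgoing arc in $Q'$. From this it follows that passing to the toric transitive closure commutes with flipping $k$: $k$ is a source (resp.\ sink) of $\overline{Q'}$ exactly when it is one in $Q'$, $\mu_k(\overline{Q'})$ is defined, and $\overline{\mu_k(Q')}=\mu_k(\overline{Q'})$. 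Indeed, every arc of $\overline{Q'}$ not incident to $k$ is certified by a toric directed path that must avoid $k$ (since $k$ cannot be internal), hence is unchanged under $\mu_k$, while every arc incident to $k$ is simply reversed.

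With these in hand the edge-by-edge verification is routine bookkeeping. In case (a) the orientation of $\{u^v,w^v\}$ under $\Phi$ depends only on the arc $\{u,w\}$ of the quiver, which $\mu_k$ leaves alone; in case (c) the relevant arc is $\{k,w\}$, which $\mu_k$ reverses, matching the reversal performed by $\mu_{k^v}$. In case (b) the orientation of $\{v^*,u^v\}$ is read off from an arc $\{w,u\}$ of $\overline{Q'}$ along a path $(v,w,u)$; one notes $w\neq k$ (else $k$ would be adjacent to $v$ in $\overline{Q'}$, contradicting incomparability), so by the second preliminary observation this arc, and hence the induced orientation, is unchanged under $\mu_k$, matching the fact that $\mu_{k^v}$ fixes $\{v^*,u^v\}$. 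In case (d) the defining path $(v,w,k)$ again has $w\neq k$, the arc $\{v,w\}$ stays $v\to w$, and the arc $\{w,k\}$ of $\overline{Q'}$ reverses under $\mu_k$ by $\overline{\mu_k(Q')}=\mu_k(\overline{Q'})$, so the induced orientation of $\{v^*,k^v\}$ flips, which is exactly what $\mu_{k^v}$ does. Running the same accounting once more (tracking whether the arcs at $k$ all point out of or into $k$) also yields (2), including its converse: $k^v$ is a source (resp.\ sink) of $\Phi(Q')$ precisely when $k$ is a source (resp.\ sink) of $Q'$, so the two partial flip operations have compatible domains. The one step that needs genuine care — and where I expect essentially all the real work to lie — is establishing $\overline{\mu_k(Q')}=\mu_k(\overline{Q'})$ cleanly from the ``no internal source/sink'' observation; once that is in place, cases (a)–(d) and the domain-compatibility claim are direct.
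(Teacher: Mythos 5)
Your proof is correct and runs along the same basic lines as the paper's — show $\Phi(\mu_k(Q'))=\mu_{k^v}(\Phi(Q'))$ by checking agreement edge by edge — but you are noticeably more thorough, and you isolate a genuine technical step that the paper elides. The paper's proof only examines edges of $Q'$ incident to $k$ (your cases (c) and (d)) and implicitly treats the remaining edges of $G^v$ as ``obviously unchanged.'' That is fine for your case (a), but the type-(ii) edges $\{v^*,u^v\}$ with $u\neq k$ (your case (b)) have their orientation read off from an arc of the \emph{toric transitive closure} $\overline{Q'}$, not from $Q'$ itself, so verifying they are stable under $\mu_k$ requires exactly the commutation fact $\overline{\mu_k(Q')}=\mu_k(\overline{Q'})$ that you prove via the ``a source or sink is never internal to a toric directed path'' observation. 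That lemma is not stated in the paper, and bringing it out is the main content you add; it also lets you handle case (d) uniformly rather than going back to an explicit witnessing arc in $Q'$ as the paper does. Your preliminary checks — that $\{k,v\}\notin E$ so $\mu_k(Q')\in[Q]_v$, and that $k^v$ is a source/sink of $\Phi(Q')$ exactly when $k$ is of $Q'$ so the two partial operations have matching domains — are also correct and make explicit what the paper assumes. In short: same strategy, but you fill in the parts the paper leaves implicit, and the commutation lemma $\overline{\mu_k(Q')}=\mu_k(\overline{Q'})$ is the right tool for doing so cleanly.
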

 \begin{proof}
We show that $\Phi(\mu_k(Q'))=\mu_{k^v}(\Phi(Q'))$.
Since $k^v \in V^v$, there is a corresponding vertex $k \in V$.
Suppose $k^v$ is a source in $\Phi(Q')$. Then, from the definition of $E^v$, vertex $k$ is also a source in $Q'$.

Choose an arbitrary edge $\{k,u\}$ in $Q'$. Note that $u \neq v$, since $v$ is a source. We have two cases:
 \begin{enumerate}
 \item $u$ is torically comparable with $v$ and thus $k^v \rightarrow v^*$ is a directed edge in $\Phi(Q')$ or
 \item $u$ is torically incomparable with $v$ and thus $k^v \rightarrow u^v$ is a directed edge in $\Phi(Q')$.
 \end{enumerate}
 Based on our two cases, flipping at $k^v$ in $\Phi(Q')$ yields either the edge $v^* \rightarrow k^v$ or $u^v \rightarrow k^v$. On the other hand, if we were to first flip at $k$ and then apply $\Phi$, edge $k \rightarrow u \in Q'$ again becomes  $v^* \rightarrow k^v$ or $u^v \rightarrow k^v$. The case where $k^v$ is a sink is similar.
 \end{proof}

We have now built up to our next key lemma that is used in the proof of Theorem \ref{thm: Source-sink equiv with fixed source}.

\begin{lemma} \label{Lem: phi is a bijection}  Let $[Q]$ be a toric poset, and let $Q' \in [Q]_v$. The map $\Phi$ is a bijection between $[Q]_v$ and $[\Phi(Q')]$.
\end{lemma}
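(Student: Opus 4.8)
The plan is to verify that $\Phi$ is surjective and injective as a map $[Q]_v \to [\Phi(Q')]$; that its image lies inside the single flip-class $[\Phi(Q')]$ is already Lemma~\ref{lem: orientations of G^v are flip equiv.}, so only these two steps remain.

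For surjectivity I would lift flip sequences. Given $R \in [\Phi(Q')]$, apply Lemma~\ref{lem: Flip seq. avoiding v} to the toric poset $[\Phi(Q')]$ with the vertex $v^*$ frozen, obtaining a flip sequence $\Phi(Q') = R_0, R_1, \dots, R_m = R$ with $R_{i+1} = \mu_{k_i^v}(R_i)$ and every $k_i^v \neq v^*$. Setting $Q_0 := Q'$ and $Q_{i+1} := \mu_{k_i}(Q_i)$, one checks inductively that $Q_i \in [Q]_v$ and $\Phi(Q_i) = R_i$: by the commuting square of the preceding lemma, $k_i$ is a source (respectively sink) of $Q_i$ whenever $k_i^v$ is one of $R_i = \Phi(Q_i)$, and $\Phi(\mu_{k_i}(Q_i)) = \mu_{k_i^v}(\Phi(Q_i)) = R_{i+1}$; moreover $k_i^v \neq v^*$ forces $k_i$ to be torically incomparable with $v$, hence non-adjacent to $v$, so flipping at $k_i$ keeps $v$ a source. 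Then $Q_m \in [Q]_v$ and $\Phi(Q_m) = R$.

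For injectivity I would reconstruct $Q'$ from $\Phi(Q')$. The key structural point is that a vertex is torically comparable with $v$ precisely when it equals $v$ or is a neighbor of $v$ in $G$ (a two-element subset of a toric chain is itself a toric chain, hence an arc). Now suppose $\Phi(Q_1) = \Phi(Q_2)$ with $Q_1, Q_2 \in [Q]_v$, and compare edge orientations of $G$ in $Q_1$ and $Q_2$ type by type: (a) an edge with both endpoints torically incomparable with $v$ has its orientation copied verbatim by condition~(i) of $\Phi$; (b) an edge incident to $v$ is oriented away from $v$ since $v$ is a source; (c) an edge $\{u,b\}$ with $u$ a neighbor of $v$ and $b$ torically incomparable with $v$ has $b$ at distance $2$ from $v$ in $\overline{Q'}$, so $\{v^*, b^v\} \in E^v$, and by Proposition~\ref{prop:orientation well defined} the orientation that condition~(ii) assigns to $\{v^*,b^v\}$ equals, and hence records, the orientation of $\{u,b\}$; (d) an edge $\{u,u'\}$ with $u,u'$ both neighbors of $v$ collapses to a loop and is invisible to $\Phi$. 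In cases (a)--(c), $\Phi(Q_1) = \Phi(Q_2)$ forces the orientations to agree. In case (d) I would argue instead that the orientation is rigid: if $\{u,u'\}$ were oriented oppositely in $Q_1$ and $Q_2$, then in the triangle on $\{v,u,u'\}$ the edges $\{v,u\}$ and $\{v,u'\}$ would agree (both away from $v$) while $\{u,u'\}$ is reversed, so the flow-difference of this $3$-circuit would differ in $Q_1$ and $Q_2$, contradicting $Q_1 \equiv Q_2$ by Pretzel's Theorem~\ref{thm:Pretzel}. Hence every edge of $G$ is oriented the same in $Q_1$ and $Q_2$, so $Q_1 = Q_2$.

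The main obstacle is injectivity, and specifically the type-(d) edges between two neighbors of $v$: since $\Phi$ genuinely destroys these edges, their orientations cannot be read off $\Phi(Q')$, and one must instead see that such an orientation is already forced inside the flip-class $[Q]$ once $v$ is pinned as a source---which is exactly what the flow-difference computation together with Pretzel's theorem delivers. Surjectivity, by contrast, should be routine once the commuting-square lemma and the frozen-vertex Lemma~\ref{lem: Flip seq. avoiding v} are available.
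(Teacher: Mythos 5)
Your surjectivity argument is essentially the paper's (lift a $v^*$-avoiding flip sequence from $[\Phi(Q')]$ through the commuting square of the preceding lemma), and it is correct as written; the observation that $k_i^v\neq v^*$ forces $k_i$ to be non-adjacent to $v$, so that $v$ stays a source, is exactly the right point. The injectivity half, however, rests on a false ``key structural point.'' It is not true that a vertex is torically comparable with $v$ precisely when it equals $v$ or is a $G$-neighbor of $v$. The parenthetical justification is where this goes wrong: a two-element toric chain is a pair of vertices lying on a common toric directed path (see the discussion of \cite[Prop.~6.3]{toric} after Definition~\ref{toric chain definition}), \emph{not} necessarily an arc. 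In a toric directed path $v\to y\to w\to z$ with closing arc $v\to z$, the pair $\{v,w\}$ is a toric chain even if $\{v,w\}\notin G$. Concretely, in the quiver $Q_2$ of Example~\ref{ex: example of phi}, the vertices $2$ and $5$ are torically comparable (both lie on the toric directed path $1\to2\to4\to5$ with closing arc $1\to5$) but are not adjacent. Toric comparability coincides with adjacency only in the toric transitive closure $\overline{[Q]}$, which is precisely why Definition~\ref{Def: definition of G^v} phrases the condition for $\{v^*,w^v\}\in E^v$ in terms of distance $2$ in $\overline{Q_1}$ rather than in $G$.

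This false premise leaves two holes in your case analysis of the edges of $G$. First, case (c) omits edges $\{u,b\}$ where $u$ is torically comparable with $v$ but not a $G$-neighbor of $v$; this is harmless, since the same argument via Proposition~\ref{prop:orientation well defined} (applied in $\overline{Q'}$) covers it. Second, and more seriously, in case (d) the triangle on $\{v,u,u'\}$ need not be a circuit of $G$ at all, so Theorem~\ref{thm:Pretzel} cannot be applied to it as stated. The idea is repairable: pass to the toric transitive closures $\overline{Q_1},\overline{Q_2}$, which are flip-equivalent orientations of the graph of $\overline{[Q]}$ (by \cite[Cor.~7.3]{toric}), in which $v$ is still a source (no toric directed path can terminate at a source, so the closure adds no in-edges at $v$) and in which $\{v,u\}$ and $\{v,u'\}$ are genuine edges; your flow-difference computation on that triangle then goes through. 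For comparison, the paper disposes of these invisible edges differently, by noting that they lie on toric chains through $v$, whose orientations are already forced once $v$ is pinned as a source; your Pretzel-style rigidity argument is a legitimate alternative, but only after the repair above.
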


\begin{proof} 
We first show $\Phi$ is injective. Suppose that $Q_1,Q_2 \in [Q]_v$ and that $\Phi(Q_1)=\Phi(Q_2).$ We have two types of edges in $\Phi(Q_1)$.
\begin{enumerate}
    \item Consider $\{u^v,w^v\} \in E^v$ such that $u^v, w^v \neq v^*$. From the definition of $E^v$, there is a corresponding edge $\{u,w\} \in E$. If edge $\{u^v,w^v\}$ has the same orientation in $\Phi(Q_1)$ and $\Phi(Q_2)$, then $\{u,w\}$ has the same orientation in $Q_1$ and $Q_2$.
    
    \item Consider $\{u^v,v^*\} \in E^v$. By definition of $E^v$, there exists at least one $y \in V$ such that there is a path $(v,y,u) \in \overline{Q'}$. Recall that by Proposition \ref{prop:orientation well defined}, the orientation of $\{y,u\}$ is the same for all such $y$. The orientation of $\{u^v,v^*\}$ comes from the orientation of any such edge $\{y,u\}$. If $\{u^v,v^*\}$ is oriented the same in $\Phi(Q_1)$ and $\Phi(Q_2)$, then all such edges $\{y,u\}$ are oriented the same in $Q_1$ and $Q_2$. 
\end{enumerate}\

The only edges in $Q_1$ and $Q_2$ that we have not yet accounted for are those lying on toric chains that contain $v$. Since $Q_1$ and $Q_2$ are in $[Q]_v$, we already know the orientation of all such edges. Thus, if $\Phi(Q_1)=\Phi(Q_2)$, then $Q_1=Q_2$.

Now we show that $\Phi$ is surjective. Let $Q_j \in [\Phi(Q')]$. By Lemma \ref{lem: Flip seq. avoiding v}, there is a flip sequence from $\Phi(Q')$ to $Q_j$, avoiding $v^*$. We emphasize that this flip sequence never flips $v^*$, although it might flip the neighbors of $v^*$ in $G^v$. By Lemma \ref{lem: orientations of G^v are flip equiv.}, if we apply this flip sequence to $Q'$ and then apply $\Phi$, we recover $Q_j$.
\end{proof}

\begin{proof}[Proof of Theorem \ref{thm: Source-sink equiv with fixed source}]
Let $Q_1, Q_2 \in [Q]_v$. In Lemma \ref{Lem: phi is a bijection}, we showed that $\Phi(Q_1)$ and $\Phi(Q_2)$ are flip equivalent. Moreover, using Lemma \ref{lem: Flip seq. avoiding v}, we can find a flip sequence between these that avoids flipping at $v^*$. Since $\Phi$ is a bijection, we can apply $\Phi^{-1}$ to each quiver in the sequence and after doing so, we have a sequence of quivers that are related by flips between $Q_1$ and $Q_2$. Since we do not flip at $v^*$, in the flip sequence in $[Q]$, we never flip at any vertex that is torically comparable to $v$. Therefore, the vertex $v$ remains a source at all intermediate quivers during the flip sequence between $Q_1$ and $Q_2$.
\end{proof}

One can view Theorem \ref{thm: Source-sink equiv with fixed source} simply as a statement regarding sink-source mutation of acyclic quivers. In Example \ref{ex: counterexamples to 1.10}, we show that Theorem \ref{thm: Source-sink equiv with fixed source} is not true if we drop the acyclic condition. 

\begin{example} \label{ex: counterexamples to 1.10} \rm Consider the following toric posets $[Q_1]$ and $[Q_2]$; we draw one representative of each below.

\begin{multicols}{2}
\begin{figure}[H]
\centering
\begin{tikzpicture}
\node(0) at (0,-0.5) {$Q_1$};
\node(1) at (0,0) {1};
\node(2) at (0,1) {2};
\node(3) at (0,2) {3};
\node(4) at (0,3) {4};
\node(5) at (1,3) {5};
\draw[line width = 0.25 mm, ->] (1) -- (2);
\draw[line width = 0.25 mm, ->] (2) -- (3);
\draw[line width = 0.25 mm, ->] (3) -- (4);
\draw[line width = 0.25 mm, ->] (4) -- (5);
\draw[line width = 0.25 mm, ->] (5) -- (3);
\end{tikzpicture}  
\end{figure} \columnbreak

\begin{figure}[H]
\centering
\begin{tikzpicture}
\node(0) at (0,-0.5) {$Q_2$};
\node(1) at (0,0) {1};
\node(2) at (-1,1) {2};
\node(3) at (1,1) {3};
\node(4) at (-1,2) {4};
\node(5) at (1,2) {5};
\node(6) at (0,3) {6};
\draw[line width = 0.25 mm, ->] (1) -- (2);
\draw[line width = 0.25 mm, ->] (1) -- (3);
\draw[line width = 0.25 mm, ->] (2) -- (4);
\draw[line width = 0.25 mm, ->] (2) -- (5);
\draw[line width = 0.25 mm, ->] (3) -- (4);
\draw[line width = 0.25 mm, ->] (3) -- (5);
\draw[line width = 0.25 mm, ->] (4) -- (5);
\draw[line width = 0.25 mm, ->] (5) -- (6);
\draw[line width = 0.25 mm, ->] (6) -- (4);
\end{tikzpicture}
\end{figure}
\end{multicols}

We first consider $Q_1$ and flip at vertex $1$ and then vertex $2$. For $Q_2$, we flip at vertices $1,2,3$, in that order. In both cases, the resulting quiver has vertex $1$ as a source, but there is no flip sequence between the resulting quiver and the starting quiver that avoids flipping at $1$ and at neighbors of $1$.
The author would like to thank Darij Grinberg and Scott Neville for providing the counterexamples involving $[Q_1]$ and $[Q_2]$, respectively.
\end{example}
\section{An Algorithm for Finding Toric Total Extensions} \label{sec: algorithm} We recall from Lemma \noindent \ref{lem: basic poset fact} the well-known recursive description of the set of linear extensions $\mathcal{L}(P)$ of an ordinary poset $P$.
\vspace{1 mm}

\noindent {\bf Lemma \ref{lem: basic poset fact}}
{\it Let $P$ be a poset, and let $a,b$ be two incomparable elements of $P$. Then, 
\[
\mathcal{L}(P) = \mathcal{L}(P_{a \rightarrow b}) \sqcup \mathcal{L}(P_{b \rightarrow a})
\]
where $P_{a \rightarrow b}$ is obtained from $P$ by adding the relation $a < b$ and $P_{b \rightarrow a}$ is defined similarly.}
\vspace{1mm}

 We use this fact as motivation to provide a recursive algorithm for finding the set of toric total extensions $\torext$ of a toric poset $[Q]$.

Let $Q$ be a quiver with vertices $a$ and $b$ such that neither arc $a \rightarrow b$ nor $b \rightarrow a$ is in $Q$.
We define $Q_{a \rightarrow b}$ to be the quiver $Q$ with an added arc $a \rightarrow b$, and $Q_{b \rightarrow a}$ is defined similarly. Note that an equivalent definition of $a,b$ being torically incomparable in $[Q]$ is that there exists a quiver $Q' \in [Q]$ such that $a,b$ are ordinary incomparable in $Q'$. 

\begin{theorem} 
\label{thm:unified-recursion-steps}
Let $a,b$ be two torically incomparable elements in the toric poset $[Q]$.
\begin{enumerate}
\item[(i)] If $a,b$ are in different connected components of the graph of $[Q]$, then $[Q_{a \rightarrow b}] = [Q_{b \rightarrow a}]$ and
\[ \torext = \mathcal{L}_{\rm{tor}}([Q_{a \rightarrow b}]) = \mathcal{L}_{\rm{tor}}([Q_{b \rightarrow a}]). \]
\item[(ii)] If $a,b$ are in the same connected component and $Q' \in [Q]$ is a representative where $a,b$ are ordinary incomparable, then the sets $\mathcal{L}_{\rm{tor}}([Q'_{a \rightarrow b}])$ and $\mathcal{L}_{\rm{tor}}([Q'_{b \rightarrow a}])$ are disjoint subsets of $\torext$, but the inclusion of the disjoint union $$\mathcal{L}_{\rm{tor}}([Q'_{a \rightarrow b}]) \sqcup \mathcal{L}_{\rm{tor}}([Q'_{b \rightarrow a}]) \subseteq \torext$$ may be proper.

\item[(iii)] On the other hand, assume that $a,b$ are distance two in the graph of the toric transitive closure $\overline{[Q]}$,
say both adjacent to the vertex $v$.  Then if one chooses $Q' \in \overline{[Q]}_v$, that is, $Q'$  is a representative of $\overline{[Q]}$ with  $v$ a source (as in  Proposition~\ref{prop:[Q]_v-nonempty}), the inclusion in (ii) becomes an equality: 
\[
\torext = \mathcal{L}_{\rm{tor}}([Q'_{a \rightarrow b}]) \sqcup \mathcal{L}_{\rm{tor}}([Q'_{b \rightarrow a}]).
\]
\end{enumerate}
\end{theorem}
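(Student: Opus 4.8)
The plan is to argue throughout inside the toric transitive closure $\overline{[Q]}$, which by Lemma~\ref{lem: L-tor-independent-of-graph} has $\torext$ as its set of toric total extensions, and to deduce part~(iii) from the two tools already developed: the disjoint-union reformulation of $\torext$ indexed by source-$v$ representatives (Proposition~\ref{prop: Toric lin ext reformulations}(ii), stated with $v$ in place of $1$) and the fixed-source flip result (Theorem~\ref{thm: Source-sink equiv with fixed source}). Applying part~(ii) to the toric poset $\overline{[Q]}$ with the representative $Q'$ already supplies the inclusion $\mathcal{L}_{\rm{tor}}([Q'_{a\rightarrow b}]) \sqcup \mathcal{L}_{\rm{tor}}([Q'_{b\rightarrow a}]) \subseteq \torext$ and the disjointness, so the real content of part~(iii) is the reverse inclusion.

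First I would record a preliminary observation: in \emph{every} representative $R \in \overline{[Q]}_v$, the elements $a$ and $b$ are ordinary incomparable, so that in particular $Q'_{a\rightarrow b}$ and $Q'_{b\rightarrow a}$ are acyclic and part~(ii) genuinely applies to $Q'$. Suppose instead that $R$ contained a directed path from $a$ to $b$. Since $v$ is a source adjacent to both $a$ and $b$, we have $v\to a$ and $v\to b$ in $R$, and $v$ --- a source distinct from $a$ --- cannot lie on this path. Flipping the source $v$ turns those arcs into $a\to v$ and $b\to v$, and then the path from $a$ to $b$, followed by $b\to v$ and the closing arc $a\to v$, exhibits $a,\ldots,b,v$ as a toric directed path; hence $a$ and $b$ lie on a common toric chain of $[\mu_v(R)]=\overline{[Q]}$, contradicting that $a$ and $b$ are torically incomparable. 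A symmetric argument rules out a directed path from $b$ to $a$.

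For the reverse inclusion, take $[w]\in\torext$. By Proposition~\ref{prop: Toric lin ext reformulations}(ii), $[w]=[v\hat{w}]$ for some $Q''\in\overline{[Q]}_v$ and some $\hat{w}\in\mathcal{L}(Q''-\{v\})$. By the preliminary observation, $a$ and $b$ are ordinary incomparable in $Q''$, so $\hat{w}$ lists $a$ before $b$ or $b$ before $a$; assume the former (the latter is symmetric). Since $v\notin\{a,b\}$, we have $(Q''_{a\rightarrow b})-\{v\}=(Q''-\{v\})_{a\rightarrow b}$, and $\hat{w}$, listing $a$ before $b$, is a linear extension of this poset; as $v$ remains a source in $Q''_{a\rightarrow b}$, Proposition~\ref{prop: Toric lin ext reformulations}(ii) gives $[w]=[v\hat{w}]\in\mathcal{L}_{\rm{tor}}([Q''_{a\rightarrow b}])$. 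It then remains to identify $[Q''_{a\rightarrow b}]$ with $[Q'_{a\rightarrow b}]$.

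I expect this last identification to be the crux of the argument, and it is precisely where the hypothesis that $a$ and $b$ are both adjacent to a common source $v$ is used. By Theorem~\ref{thm: Source-sink equiv with fixed source} there is a flip sequence $Q'=P_0,P_1,\ldots,P_m=Q''$ inside $\overline{[Q]}$ that keeps $v$ a source at every stage, hence never flips at $v$ nor at any neighbor of $v$ --- in particular never at $a$ or $b$. Each $P_i$ lies in $\overline{[Q]}_v$, so by the preliminary observation $a,b$ are ordinary incomparable in $P_i$ and $(P_i)_{a\rightarrow b}$ is acyclic; and since no flip in the sequence occurs at $a$ or $b$, inserting the arc $a\rightarrow b$ commutes with each flip, so $(P_i)_{a\rightarrow b}$ is obtained from $(P_{i-1})_{a\rightarrow b}$ by the same, still legal, source/sink flip. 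Therefore $Q'_{a\rightarrow b}\equiv Q''_{a\rightarrow b}$, whence $\mathcal{L}_{\rm{tor}}([Q''_{a\rightarrow b}])=\mathcal{L}_{\rm{tor}}([Q'_{a\rightarrow b}])$, and likewise $\mathcal{L}_{\rm{tor}}([Q''_{b\rightarrow a}])=\mathcal{L}_{\rm{tor}}([Q'_{b\rightarrow a}])$ in the symmetric case. This proves $\torext\subseteq\mathcal{L}_{\rm{tor}}([Q'_{a\rightarrow b}])\cup\mathcal{L}_{\rm{tor}}([Q'_{b\rightarrow a}])$, which together with part~(ii) yields the equality $\torext=\mathcal{L}_{\rm{tor}}([Q'_{a\rightarrow b}])\sqcup\mathcal{L}_{\rm{tor}}([Q'_{b\rightarrow a}])$.
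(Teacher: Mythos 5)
Your proposal proves only part (iii) of the theorem; parts (i) and (ii) are simply invoked as given. As stated, the result has three parts, and the paper proves all three: part (i) via Pretzel's flow-difference theorem (Theorem~\ref{thm:Pretzel}), and part (ii) via a flow-difference argument for disjointness together with \cite[Prop.~3.2]{toric} for the inclusion. Since your argument for part (iii) genuinely relies on part (ii) (for the inclusion and disjointness), the omitted parts are not cosmetic --- your write-up is incomplete as a proof of the full statement.

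For part (iii) itself, your argument is correct and uses the same core machinery as the paper --- Proposition~\ref{prop: Toric lin ext reformulations}(ii) with the source taken to be $v$, and Theorem~\ref{thm: Source-sink equiv with fixed source} to get a flip sequence in $\overline{[Q]}_v$ avoiding $v$ and its neighbors --- but with two pleasant improvements. First, your preliminary observation (that $a,b$ are ordinary incomparable in every $R\in\overline{[Q]}_v$) is argued more cleanly than the paper's: rather than assembling a ``most general'' picture of the toric directed paths through $v$, you flip the source $v$ to a sink and observe directly that the assumed $a$-to-$b$ directed path together with $b\to v$ and the closing arc $a\to v$ is a toric directed path containing both $a$ and $b$, an immediate contradiction. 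Second, where the paper establishes the reverse inclusion by rewriting the disjoint-union decomposition of $\torext$ at the set level (via Lemma~\ref{lem: basic poset fact} and an identification of $\{(\overline{Q'})_{a\to b}:\overline{Q'}\in\overline{[Q]}_v\}$ with $[\overline{Q}_{a\to b}]_v$), you chase an arbitrary $[w]\in\torext$ through the decomposition and show directly that it lands in $\mathcal{L}_{\rm{tor}}([Q'_{a\to b}])$ or $\mathcal{L}_{\rm{tor}}([Q'_{b\to a}])$; this sidesteps having to verify that the relevant set-level map is a bijection. Both routes use the same ideas; yours is a bit more elementary to check. To complete the proposal you would need to supply the flow-difference arguments for parts (i) and (ii).
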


\begin{proof}
\textbf{Assertion (i)} We note that adding either  the directed edge $a \rightarrow b$ to $Q$ or the directed edge $b \rightarrow a$ to $Q$ does not create any new cycles and therefore does not change the flow-difference of $Q$. By Theorem \ref{thm:Pretzel}, the toric poset $[Q_{a \rightarrow b}]$ is equal to $[Q_{b \rightarrow a}]$, so we have that $\mathcal{L}_{\rm{tor}}([Q_{a \rightarrow b}])=\mathcal{L}_{\rm{tor}}([Q_{b \rightarrow a}])$.

\textbf{Assertion (ii)} Suppose $a,b$ are in the same connected component of $G$, the underlying graph of $[Q]$. We first show that $\mathcal{L}_{\rm{tor}}([Q_{a \rightarrow b}])$ and $\mathcal{L}_{\rm{tor}}([Q_{b \rightarrow a}])$ are disjoint. The quivers $Q_{a \rightarrow b}$ and $Q_{b \rightarrow a}$ will have different flow-differences. By Theorem \ref{thm:Pretzel}, the toric posets $[Q_{a \rightarrow b}]$ and $[Q_{b \rightarrow a}]$ are not equal, so they correspond to disjoint toric chambers of $\mathcal{A}_{\rm{tor}}(G)$. Therefore, by Definition \ref{def: toric total extension}, $\mathcal{L}_{\rm{tor}}([Q_{a\rightarrow b}])$ and $\mathcal{L}_{\rm{tor}}([Q_{b \rightarrow a}])$ are disjoint. 

Let $c_{[Q]}$ be the toric chamber in the associated toric graphic hyperplance arrangement that corresponds to the toric poset $[Q]$ and let $c_{[Q_{a \rightarrow b}]}$ be defined similarly. By \cite[Prop. 3.2]{toric}, we have that $c_{[Q_{a \rightarrow b}]} \subseteq c_{[Q]}$ and $c_{[Q_{b \rightarrow a}]} \subseteq c_{[Q]}$. It follows that $\mathcal{L}_{\rm{tor}}([Q_{a \rightarrow b}]) \subseteq \torext$ and $\mathcal{L}_{\rm{tor}}([Q_{b \rightarrow a}]) \subseteq \torext$. Therefore, we have shown that $\mathcal{L}_{\rm{tor}}([Q_{a \rightarrow b}]) \bigsqcup \mathcal{L}_{\rm{tor}}([Q_{b \rightarrow a}]) \subseteq \torext.$ In Example \ref{Ex: proper torext}, we demonstrate an instance where this inclusion is in fact proper.

\textbf{Assertion (iii)}
Let $a,b$ be a torically incomparable pair such that both vertices are adjacent to a common vertex $v$ in the graph of $\overline{[Q]}$. We first show that in any representative $Q' \in \overline{[Q]}$ where $v$ is a source, elements $a,b$ are ordinary incomparable in $Q'$. Consider for the sake of contradiction that for such a representative $a$ and $b$ are ordinary comparable, i.e. up to relabeling, there is a directed path from $a$ to $b$.  

By assumption, vertices $a,b$ are connected to $v$ in the graph of $\overline{[Q]}$, so $a$ is torically comparable to $v$ and $b$ is torically comparable to $v$. Therefore, $a$ and $v$ both lie on a toric directed path and $b$ and $v$ live on a different toric directed path. Note that these toric directed paths cannot be the same, since by assumption $a$ and $b$ are torically incomparable. The most general case for these toric directed paths is shown here

\begin{center}
  \begin{tikzpicture}
\node(0) at (0,0){$v$};
\node(1) at (-0.5,1){$x_1$};
\node(2) at (-1,2){$x_2$};
\node(15)[rotate=-28] at (-2.15,5) {$\ddots$};
\node(16)[rotate=28] at (2.15,5) {$\iddots$};
\node[rotate=-25](3) at (-1.4,3.1){$\ddots$};
\node(4) at (-1.9,4){$a$};
\node(5) at (-2.75,6){$x_k$};
\node(7) at (0.5,1){$y_1$};
\node(8) at (1,2){$y_2$};
\node(12) at (-1,4){$z_1$};
\node(13) at (0,4){$\cdots$};
\node(14) at (1,4){$z_j$};
\node[rotate=25](9) at (1.4,3.1){$\iddots$};
\node(10) at (1.9,4) {$b$};
\node(11) at (2.75,6) {$y_l$};
\draw[line width = 0.25 mm, ->] (0) -- (1);
\draw[line width = 0.25 mm, ->] (12) -- (13);
\draw[line width = 0.25 mm, ->] (13) -- (14);
\draw[line width = 0.25 mm, ->] (14) -- (10);
\draw[line width = 0.25 mm, ->] (4) -- (12);
\draw[dotted, line width = 0.25 mm, ->] (0) to [out=140,in=250, looseness = 0.8]  (4);
\draw[dotted, line width = 0.25 mm, ->] (0) to [out=40,in=290, looseness = 0.8]  (10);
\draw[line width = 0.25 mm, ->] (1) -- (2);
\draw[line width = 0.25 mm, ->] (2) -- (3);
\draw[line width = 0.25 mm, ->] (3) -- (4);
\draw[line width = 0.25 mm, ->] (4) -- (15);
\draw[line width = 0.25 mm, ->] (15) -- (5);
\draw[line width = 0.25 mm, ->] (0) to (7);
\draw[line width = 0.25 mm, ->] (7) -- (8);
\draw[line width = 0.25 mm, ->] (8) -- (9);
\draw[line width = 0.25 mm, ->] (9) -- (10);
\draw[line width = 0.25 mm, ->] (10) -- (16);
\draw[line width = 0.25 mm, ->] (16) -- (11);

\draw[line width = 0.25 mm, ->] (0) to [out = 35, in = 305]  (11);
\draw[line width = 0.25 mm, ->] (0) to [out = 145 , in =235] (5);
\end{tikzpicture} 
\end{center}
where the dashed edges from $v$ to $a$ and from $v$ to $b$ are two of the edges that appear in $\overline{[Q]}$, but are possibly not in $[Q]_{\rm{Hasse}}$.
 We see that the elements $v,x_1,x_2, \ldots, a,z_1, \ldots, z_j,b, \ldots, y_l$ form a toric chain, contradicting the fact that $a$ and $b$ are torically incomparable elements.

Using Proposition \ref{prop: Toric lin ext reformulations} part (ii), we have 
\[\mathcal{L}_{\rm{tor}}(\overline{[Q]}) = \bigsqcup_{
\substack{
\overline{Q'} \in \overline{[Q]}_v}}
\left\{[v\hat{w}]: \hat{w} \in \mathcal{L}(\overline{Q'}-\{v\}) \right\}.\] We have shown above that for any representative of $\overline{[Q]}$ where $v$ is a source,  $a$ and $b$ must be ordinary incomparable elements. Therefore, we can employ Lemma \ref{lem: basic poset fact} to each such representative:

\begin{align*}
\mathcal{L}_{\rm{tor}}(\overline{[Q]})  &= \mathlarger{\mathlarger{\bigsqcup}}_{\substack{
\overline{Q'} \in \overline{[Q]}_v}}
\left\{[v\hat{w}]: \hat{w} \in \mathcal{L}(\overline{Q'}_{a \rightarrow b}-\{v\}) \right\} \hspace{0.15 cm} \bigsqcup \hspace{0.15 cm}  \left\{[v\hat{w}]: \hat{w} \in \mathcal{L}(\overline{Q'}_{b \rightarrow a}-\{v\}) \right\}\\
&=\bigsqcup_{\substack{
\overline{Q'} \in \overline{[Q]}_v}}
\left\{[v\hat{w}]: \hat{w} \in \mathcal{L}(\overline{Q'}_{a \rightarrow b}-\{v\}) \right\}  \hspace{0.3 cm} \mathlarger{\mathlarger{\bigsqcup}}\hspace{0.3 cm}  \bigsqcup_{\substack{
\overline{Q'} \in \overline{[Q]}_v}}
\left\{[v\hat{w}]: \hat{w} \in \mathcal{L}(\overline{Q'}_{b \rightarrow a}-\{v\}) \right\}. \\
\end{align*}

We next claim that 
\begin{align*}
\bigsqcup_{\substack{
\overline{Q'} \in \overline{[Q]}_v}}
\left\{[v\hat{w}]: \hat{w} \in \mathcal{L}(\overline{Q'}_{a \rightarrow b}-\{v\}) \right\} 
&= \bigsqcup_{\substack{
\overline{Q'} \in [\overline{Q}_{a \rightarrow b}]}_v}
\left\{[v\hat{w}]: \hat{w} \in \mathcal{L}(\overline{Q'}-\{v\}) \right\}  \\
&= \bigsqcup_{\substack{
\overline{Q'} \in \overline{[Q_{a \rightarrow b}]}_v}}
\left\{[v\hat{w}]: \hat{w} \in \mathcal{L}(\overline{Q'}-\{v\}) \right\}
\end{align*}
and similarly, swapping the roles of $a$ and $b$. By Theorem \ref{thm: Source-sink equiv with fixed source}, there exists a flip sequence for any two quivers $Q',Q'' \in [Q]_v$ such that each intermediate quiver is also in $[Q]_v$. We emphasize that vertex $v$ is never flipped. Moreover, since vertices $a$ and $b$ lie on a toric directed path with $v$, both $a$ and $b$ also cannot be flipped. Therefore, every flip in this flip sequence commutes with the operation of adding the directed edge $a \rightarrow b$ (or $b \rightarrow a).$ We can use this special flip sequence between $Q'$ and $Q''$ to give a flip sequence between $Q'_{a \rightarrow b}$ and $Q''_{b \rightarrow a}$, and thus we have our first equality. The second equality follows from Lemma \ref{lem: L-tor-independent-of-graph}, so we can index our union as desired. We have that 

\begin{align*}
\mathcal{L}_{\rm{tor}}(\overline{[Q]}) &= \bigsqcup_{\substack{
\overline{Q'} \in \overline{[Q_{a \rightarrow b}]}_v}}
\left\{[v\hat{w}]: \hat{w} \in \mathcal{L}(\overline{Q'}-\{v\}) \right\} \hspace{0.3 cm}  \mathlarger{\mathlarger{\bigsqcup}} \hspace{0.3 cm}  \bigsqcup_{\substack{
\overline{Q'} \in \overline{[Q_{b \rightarrow a}]}_v}}
\left\{[v\hat{w}]: \hat{w} \in \mathcal{L}(\overline{Q'}-\{v\}) \right\}  \\
& = \mathcal{L}_{\rm{tor}}(\overline{[Q_{a \rightarrow b}]}) \hspace{0.3 cm}  \mathlarger{\mathlarger{\bigsqcup}} \hspace{0.3 cm}  \mathcal{L}_{\rm{tor}}(\overline{[Q_{b \rightarrow a}]}). \qedhere 
\end{align*} 
\end{proof}

Considering Theorem \ref{thm:unified-recursion-steps} part (ii), we now show an example where the inclusion of the disjoint union $\mathcal{L}_{\rm{tor}}([Q_{a \rightarrow b}]) \bigsqcup \mathcal{L}_{\rm{tor}}([Q_{b \rightarrow a}]) \subseteq \torext$ is proper.
\begin{example} \rm \label{Ex: proper torext}
We consider the toric posets $[Q], [Q_{4 \rightarrow 5}], [Q_{5 \rightarrow 4}],[Q_0]$ as well as their corresponding sets of toric total extensions. For each toric poset, we draw one representative below.
\begin{center}
  \begin{tabular}{cccc}
\begin{tikzpicture}
\node(0) at (0,-0.5) {$Q$};
\node(1) at (0,0) {$1$};
\node(2) at (-0.5,1) {$2$};
\node(3) at (0.5,1) {$3$};
\node(4) at (2.5,1) {$4$};
\node(5) at (0,2) {$5$};
\draw[line width = 0.25 mm, ->] (1) -- (2);
\draw[line width = 0.25 mm, ->] (1) -- (3);
\draw[line width = 0.25 mm, ->] (2) -- (5);
\draw[line width = 0.25 mm, ->] (3) -- (5);
\draw[line width = 0.25 mm, ->] (1) -- (4);
\end{tikzpicture} & \hspace{0.5 cm} \begin{tikzpicture}
\node(0) at (0,-0.5) {$Q_{4 \rightarrow 5}$};
\node(1) at (0,0) {$1$};
\node(2) at (-0.5,1) {$2$};
\node(3) at (0.5,1) {$3$};
\node(4) at (2.5,1) {$4$};
\node(5) at (0,2) {$5$};
\draw[line width = 0.25 mm, ->] (1) -- (2);
\draw[line width = 0.25 mm, ->] (1) -- (3);
\draw[line width = 0.25 mm, ->] (2) -- (5);
\draw[line width = 0.25 mm, ->] (3) -- (5);
\draw[line width = 0.25 mm, ->] (1) -- (4);
\draw[line width = 0.25 mm, ->] (4) -- (5);
\end{tikzpicture} \hspace{0.5 cm}  & \begin{tikzpicture}
\node(0) at (0,-0.5) {$Q_{5 \rightarrow 4}$};
\node(1) at (0,0) {$1$};
\node(2) at (-0.5,1) {$2$};
\node(3) at (0.5,1) {$3$};
\node(4) at (2.5,2.5) {$4$};
\node(5) at (0,2) {$5$};
\draw[line width = 0.25 mm, ->] (1) -- (2);
\draw[line width = 0.25 mm, ->] (1) -- (3);
\draw[line width = 0.25 mm, ->] (2) -- (5);
\draw[line width = 0.25 mm, ->] (3) -- (5);
\draw[line width = 0.25 mm, ->] (1)  to [out=15,in=250, looseness = 0.8]   (4);
\draw[ line width = 0.25 mm, ->] (5) -- (4);
\end{tikzpicture} &
\begin{tikzpicture}[scale=0.85]
\node(0) at (0,-0.5) {$Q_0$};
\node(1) at (0,0) {$1$};
\node(4) at (0,1) {$4$};
\node(5) at (0,2) {$5$};
\node(2) at (-1,3) {$2$};
\node(3) at (1,3) {$3$};
\draw[ line width = 0.25 mm, ->] (1) -- (4);
\draw[ line width = 0.25 mm, ->] (4) -- (5);
\draw[ line width = 0.25 mm, ->] (5) -- (2);
\draw[ line width = 0.25 mm, ->] (5) -- (3);
\draw[line width = 0.25 mm, ->] (1)  to [out=150,in=250, looseness = 0.8]   (2);
\draw[line width = 0.25 mm, ->] (1)  to [out=30,in=290, looseness = 0.8]   (3);
\end{tikzpicture}
 \end{tabular}
 \end{center}
 \begin{align*}
\mathcal{L}_{\rm{tor}}([Q_{4 \rightarrow 5}])  = \{&[(1,2,3,4,5)],[(1,2,4,3,5)], [(1,3,2,4,5)],[(1,3,4,2,5)][(1,4,2,3,5)],[(1,4,3,2,5)], \\&[(1,5,2,3,4)], [(1,5,2,4,3)],
[(1,5,3,2,4)],[(1,5,3,4,2)],[(1,5,4,2,3)],[(1,5,4,3,2)]\}  \\
\mathcal{L}_{\rm{tor}}([Q_{5 \rightarrow 4}])  = \{&[(1,2,3,5,4)],[(1,3,2,5,4)]\} \\
\mathcal{L}_{\rm{tor}}([Q_0])  = \{&[(1,4,5,2,3)],[(1,4,5,3,2)]\}
\end{align*}
For the set of toric total extensions of $[Q]$, we have
\begin{center}
$\torext =\mathcal{L}_{\rm{tor}}([Q_{4 \rightarrow 5}]) \bigsqcup \mathcal{L}_{\rm{tor}}([Q_{5 \rightarrow 4}]) \bigsqcup \mathcal{L}_{\rm{tor}}([Q_0])$.
\end{center}
\end{example}

Using Theorem \ref{thm:unified-recursion-steps}, we can recursively compute $\mathcal{L}_{\rm{tor}}(\overline{[Q]})$ in terms of
$\mathcal{L}_{\rm{tor}}(\overline{[Q_i]})$ for various toric
posets $[Q_i]$ each having more edges in their toric Hasse diagram than $[Q]$ has. Each of the latter toric posets has fewer toric total extensions, so they are easier to understand. We will show that when this iterative process ends, our resulting toric posets are exactly the toric total orders $[w]$; that is, $\mathcal{L}_{\rm{tor}}([w])=\{[w]\}$. Note that the transitive closure of a toric chain is a complete graph.

\begin{cor} \label{cor:algorithm} 
For a toric poset $[Q]$, iterative application of Theorem \ref{thm:unified-recursion-steps} gives us a finite algorithm for finding the set $\torext$, where the resulting toric posets correspond to toric total orders $[w]$. In other words, for every toric poset that is not a toric total order $[w]$, either Theorem \ref{thm:unified-recursion-steps} part (i) or part (iii) applies.

\end{cor}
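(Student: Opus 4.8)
The plan is to establish a \emph{dichotomy} --- that any toric poset which is not a toric total order admits a pair of elements making either part~(i) or part~(iii) of Theorem~\ref{thm:unified-recursion-steps} applicable --- together with a \emph{termination} argument showing that iterating the resulting recursion produces a finite tree whose leaves are exactly the toric total orders.

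For the dichotomy, first recall (from the definition of a toric total order, the fact that the transitive closure of a toric chain is a complete graph, and Lemma~\ref{lem: L-tor-independent-of-graph}) that $[Q]$ is a toric total order $[w]$ precisely when every pair of distinct elements is torically comparable, equivalently when the underlying graph $G$ of $\overline{[Q]}$ is complete; and in that case $\mathcal{L}_{\rm{tor}}([w])=\{[w]\}$. So suppose $[Q]$ is not a toric total order, i.e.\ $G$ is not complete. If $G$ is disconnected, choose $a,b$ in different components; since a toric chain lies in a single component of $G$, the elements $a,b$ are torically incomparable and Theorem~\ref{thm:unified-recursion-steps}(i) applies. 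If instead $G$ is connected but not complete, pick non-adjacent vertices and a shortest path $a=v_0,v_1,\ldots,v_k=b$ between two of them, with $k\ge 2$; then $v_0$ and $v_2$ are at distance exactly two in $G=$ graph of $\overline{[Q]}$ (they cannot be adjacent, or the path would not be shortest), both adjacent to $v_1$, and they are torically incomparable because non-adjacency in the graph of the toric transitive closure is exactly torical incomparability. Hence Theorem~\ref{thm:unified-recursion-steps}(iii) applies with $v=v_1$. The algorithm is then: if $[Q]$ is a toric total order return $\{[w]\}$; if the graph of $\overline{[Q]}$ is disconnected, apply part~(i) to a cross-component incomparable pair and recurse on the one resulting toric poset; otherwise apply part~(iii) to a distance-two incomparable pair and recurse on the two resulting toric posets, taking the disjoint union. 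Correctness of the output is immediate from the set equalities in parts~(i) and~(iii) and the base case $\mathcal{L}_{\rm{tor}}([w])=\{[w]\}$.

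For termination I would track the measure $\mu([Q]) := |\mathcal{L}_{\rm{tor}}([Q])| + c([Q])$, where $c([Q])$ is the number of connected components of the underlying graph. Since every toric poset has at least one toric total extension (Proposition~\ref{prop: Toric lin ext reformulations}(i)) and at least one component, $\mu\ge 2$. In a part-(i) step, $\mathcal{L}_{\rm{tor}}$ is unchanged while adding the arc between the two components decreases $c$ by one, so $\mu$ drops by one. In a part-(iii) step, $c$ is unchanged (the pair lies in one component), while $\mathcal{L}_{\rm{tor}}([Q'_{a\rightarrow b}])$ and $\mathcal{L}_{\rm{tor}}([Q'_{b\rightarrow a}])$ partition $\mathcal{L}_{\rm{tor}}([Q])$ and are each nonempty, hence each is a \emph{proper} subset, so $\mu$ strictly decreases for each child. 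Thus $\mu$ strictly decreases along every branch, bounding the recursion depth by $\mu([Q])-2$; since the recursion branches at most in two, the whole tree is finite, and because the recursion halts only at toric total orders, the leaves are exactly the $[w]$.

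The step I expect to be the main obstacle is the termination bookkeeping: part~(i) on its own need not decrease the number of toric total extensions at all, so the naive measure $|\mathcal{L}_{\rm{tor}}([Q])|$ fails, and one must notice that part~(i) trades a component for nothing while part~(iii) trades at least one toric total extension for nothing --- which is what the combined measure $\mu$ captures. A secondary point requiring care is that part~(iii) concerns distance two in the \emph{toric transitive closure} $\overline{[Q]}$, so the shortest-path argument in the dichotomy must be carried out in the graph of $\overline{[Q]}$ rather than that of $[Q]$ or $[Q]_{\rm{Hasse}}$.
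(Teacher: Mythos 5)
Your proof is correct. The dichotomy argument is essentially identical to the paper's: if the underlying graph $G$ of $\overline{[Q]}$ is not complete, it is either disconnected (giving part~(i)) or contains a distance-two pair found via a shortest path between two non-adjacent vertices (giving part~(iii), since non-adjacency in the graph of $\overline{[Q]}$ is exactly torical incomparability). Where you diverge is in the termination argument. The paper simply observes that each application of part~(i) or part~(iii) adds at least one edge to the underlying graph of the toric transitive closure, and there are only $\binom{n}{2}$ possible edges, so the graph becomes complete after finitely many steps. You instead introduce the compound measure $\mu([Q]) = |\mathcal{L}_{\rm{tor}}([Q])| + c([Q])$ and show it strictly decreases along every branch. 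Your measure is valid --- part~(i) drops the component count while preserving $\mathcal{L}_{\rm{tor}}$, and part~(iii) strictly shrinks $\mathcal{L}_{\rm{tor}}$ for each child because both $\mathcal{L}_{\rm{tor}}([Q'_{a\rightarrow b}])$ and $\mathcal{L}_{\rm{tor}}([Q'_{b\rightarrow a}])$ are nonempty --- but it is more machinery than needed. The worry you raise about the naive measure $|\mathcal{L}_{\rm{tor}}([Q])|$ failing is correct, but the paper's edge-count measure sidesteps it immediately: both recursive steps monotonically increase the edge set of $\overline{[Q]}$, so there is no need to reason separately about how $|\mathcal{L}_{\rm{tor}}|$ and $c$ interact. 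The one thing your measure buys is a slightly sharper bound on recursion depth (at most $\mu - 2$ rather than the number of missing edges), though the paper is not aiming for efficiency guarantees here.
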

\begin{proof}
Note that if the underlying graph of $\overline{[Q]}$ is not a complete graph, then it is disconnected or there exists a pair of elements with distance exactly 2 in the graph of $\overline{[Q]}$. If $\overline{[Q]}$ is disconnected, then we can apply Theorem \ref{thm:unified-recursion-steps} part (i). Otherwise, there exists two vertices with distance exactly $2$, so we can apply Theorem \ref{thm:unified-recursion-steps} part (iii). This process is finite, since there are a finite number of edges we can add to the graph before we have a complete graph. 
\end{proof}
\begin{example} \rm
In this example, we recalculate $\torext$ from Example \ref{ex: Calculating toric total extensions} (the toric poset is from Example \ref{Ex: toric poset example}), and 
we illustrate Theorem \ref{thm:unified-recursion-steps} with the following tree. 
The relationship between nodes and children of nodes in the tree is as follows: if node $\overline{[Q]}$ has children $\overline{[Q_i]}$, then $\mathcal{L}_{\rm{tor}}(\overline{[Q]})= \bigsqcup_{\overline{[Q_i]}} \mathcal{L}_{\rm{tor}}(\overline{[Q_i]})$. For each of our toric posets, we draw one representative. We note that in this example $\overline{[Q]}=[Q]_{\rm{Hasse}}$.

\begin{center}
\begin{tikzpicture}[scale = 0.85]
\node(0) at (0,-0.5){$Q$};
\node(1) at (0,0){$1$};
\node(2) at (-1,1){$2$};
\node(3) at (1,1){$3$};
\node(4) at (0,2){$4$};
\draw[line width = 0.25 mm, ->] (1) -- (2);
\draw[line width = 0.25 mm, ->] (2) -- (4);
\draw[line width = 0.25 mm, ->] (1)--(3);
\draw[line width = 0.25 mm, ->] (3) -- (4);

\node(0.1) at (-3.5,-2.5){$Q_{2 \rightarrow 3}$};
\node(6) at (-3.5,-2){$1$};
\node(7) at (-4.5,-1){$2$};
\node(8) at (-2.5,-1){$3$};
\node(9) at (-3.5,-0){$4$};
\draw[line width = 0.25 mm, ->] (6) -- (7);
\draw[line width = 0.25 mm, ->] (7) -- (9);
\draw[line width = 0.25 mm, ->] (6)--(8);
\draw[line width = 0.25 mm, ->] (8) -- (9);
\draw[line width = 0.25 mm, ->] (7) -- (8);

\node(0.2) at (3.5,-2.5){$Q_{3 \rightarrow 2}$};
\node(11) at (3.5,-2){$1$};
\node(12) at (2.5,-1){$2$};
\node(13) at (4.5,-1){$3$};
\node(14) at (3.5,0){$4$};
\draw[line width = 0.25 mm, ->] (11) -- (12);
\draw[line width = 0.25 mm, ->] (12) -- (14);
\draw[line width = 0.25 mm, ->] (11) -- (13);
\draw[line width = 0.25 mm, ->] (13) -- (14);
\draw[line width = 0.25 mm, ->] (13) -- (12);

\node(0.3) at (-5,-7){$(Q_{3 \rightarrow 2})_{1 \rightarrow 4}$};
\node(16) at (-5,-4.5){$1$};
\node(17) at (-5,-6.5){$2$};
\node(18) at (-5,-5.5){$3$};
\node(19) at (-5,-3.5){$4$};
\draw[line width = 0.25 mm, ->] (17) -- (18);
\draw[line width = 0.25 mm, ->] (18) -- (16);
\draw[line width = 0.25 mm, ->] (16) -- (19);
\draw[line width = 0.25 mm, ->] (17) to  [out=45,in=315, looseness = 0.8] (16);
\draw[line width = 0.25 mm, ->] (18) to [out=45,in=315, looseness = 0.8] (19);
\draw[line width = 0.25 mm, ->] (17) to [out=20,in=340] (19);

\node(0.4) at (-2,-7){$(Q_{2 \rightarrow 3})_{4 \rightarrow 1}$};
\node(21) at (-2,-3.5){$1$};
\node(22) at (-2,-4.5){$4$};
\node(23) at (-2,-5.5){$3$};
\node(24) at (-2,-6.5){$2$};
\draw[line width = 0.25 mm, ->] (24) -- (23);
\draw[line width = 0.25 mm, ->] (23) -- (22);
\draw[line width = 0.25 mm, ->] (22) -- (21);
\draw[line width = 0.25 mm, ->] (24) to  [out=45,in=315, looseness = 0.8] (22);
\draw[line width = 0.25 mm, ->] (23) to [out=45,in=315, looseness = 0.8] (21);
\draw[line width = 0.25 mm, ->] (24) to [out=20,in=340] (21);

\node(0.5) at (2,-7){$(Q_{3 \rightarrow 2})_{1 \rightarrow 4}$};
\node(21) at (2,-3.5){$4$};
\node(22) at (2,-4.5){$1$};
\node(23) at (2,-5.5){$2$};
\node(24) at (2,-6.5){$3$};
\draw[line width = 0.25 mm, ->] (24) -- (23);
\draw[line width = 0.25 mm, ->] (23) -- (22);
\draw[line width = 0.25 mm, ->] (22) -- (21);
\draw[line width = 0.25 mm, ->] (24) to  [out=45,in=315, looseness = 0.8] (22);
\draw[line width = 0.25 mm, ->] (23) to [out=45,in=315, looseness = 0.8] (21);
\draw[line width = 0.25 mm, ->] (24) to [out=20,in=340] (21);

\node(0.6) at (5,-7){$(Q_{3 \rightarrow 2})_{4 \rightarrow 1}$};
\node(25) at (5,-3.5){$1$};
\node(26) at (5,-4.5){$4$};
\node(27) at (5,-5.5){$2$};
\node(28) at (5,-6.5){$3$};
\draw[line width = 0.25 mm, ->] (28) -- (27);
\draw[line width = 0.25 mm, ->] (27) -- (26);
\draw[line width = 0.25 mm, ->] (26) -- (25);
\draw[line width = 0.25 mm, ->] (28) to  [out=45,in=315, looseness = 0.8] (26);
\draw[line width = 0.25 mm, ->] (27) to [out=45,in=315, looseness = 0.8] (25);
\draw[line width = 0.25 mm, ->] (28) to [out=20,in=340] (25);

\draw(-0.75, 0.25) -- (-2.5,-0.5);
\draw(0.75, 0.25) -- (2.5,-0.5);
\draw(0.75, 0.25) -- (2.5,-0.5);
\draw(-4.25, -1.75) -- (-4.75,-3);
\draw(-2.75, -1.75) -- (-2.25,-3);

\draw(2.75, -1.75) -- (2.25,-3);
\draw(4.25, -1.75) -- (4.75,-3);

\end{tikzpicture}
\end{center}
 Reading the toric total orders from left to right, we have $[(1,4,2,3)],[(1,2,3,4)],[(1,4,3,2)],[(1,3,2,4)]$, the set $\mathcal{L}_{\rm{tor}}(\overline{[Q]})$
found in Example \ref{ex: Calculating toric total extensions}.
\end{example}
\printbibliography
\end{document}